\documentclass[reqno, 12pt]{amsart}
\title[Limits of Besov Norms on Quantum Euclidean Spaces]{Limits of Besov Norms on Quantum Euclidean Spaces}
\author{Qingquan Deng, Guixiang Hong, Ping Li and Yanzhen Long\textsuperscript{\dag}}

\address{Qingquan Deng, School of Mathematics and Statistics, and  Key Laboratory of Nonlinear Analysis \&  Applications (Ministry of Education), Central China Normal University, Wuhan 430079, P.R. China}
\email{dengq@ccnu.edu.cn}

\address{Guixiang Hong, Institute for Advanced Study in Mathematics, Harbin Institute of Technology,
Harbin 150001, China}
\email{gxhong@hit.edu.cn}

\address{Ping Li, School of Mathematics and Systems Sciences, and Center for Mathematical Sciences, Wuhan University of Science and Technology, Wuhan 430065, China}
\email{liping@whu.edu.cn}

\address{Yanzhen Long, School of Mathematics and Statistics, and Hubei Key Laboratory of Mathematical Sciences, Central China Normal University, Wuhan 430079, P.R. China}
\email{longyanzhen@mails.ccnu.edu.cn}

\thanks{\textsuperscript{\dag} Corresponding author}
\date{\today}
\keywords{Quantum Euclidean space, Besov space, modulus of smoothness, Bourgain--Br\'{e}zis--Mironescu
formula, Maz'ya--Shaposhnikova formula, real interpolation}
\usepackage{color}
\usepackage{amsmath}
\usepackage{amssymb}
\usepackage[mathscr]{euscript}
\usepackage{paralist}
\usepackage{amsthm}
\usepackage{enumitem}
\usepackage{ulem}
\usepackage{diagbox}
\usepackage{multirow}
\usepackage{makecell}
\usepackage{threeparttable}
\usepackage{footnote}

\usepackage[colorlinks,linkcolor=red,anchorcolor=red,citecolor=blue,hypertexnames=false]{hyperref}

\theoremstyle{plain}
\newtheorem{theorem}{Theorem}[section]
\newtheorem{lemma}[theorem]{Lemma}
\newtheorem{proposition}[theorem]{Proposition}
\newtheorem{corollary}[theorem]{Corollary}

\theoremstyle{definition}
\newtheorem{definition}[theorem]{Definition}

\newtheoremstyle{boldremark}%
  {}{}%
  {\normalfont}{}%
  {\bfseries}{.}%
  { }{}%
\theoremstyle{boldremark}
\newtheorem{remark}[theorem]{Remark}

\newcommand\R{\mathbb{R}}
\newcommand\Z{\mathbb{Z}}

\newcommand\N{\mathbb{N}}
\newcommand\x{x}
\newcommand\y{y}
\newcommand\z{z}
\newcommand\mcm{\mathcal{M}}
\newcommand\mch{\mathcal{H}}
\newcommand\mcb{\mathcal{B}}
\newcommand\mcs{\mathcal{S}}

\newcommand\mcc{\mathcal{C}}

\newcommand\mcf{\mathcal{F}}

\numberwithin{equation}{section}

\begin{document}

\begin{abstract}\baselineskip=13pt
We extend the Bourgain--Br\'{e}zis--Mironescu (BBM) and Maz'ya--Shaposhnikova (MS) limit formulae to higher-order Besov norms on quantum Euclidean spaces $\R^d_\theta$, a fundamental class of noncompact noncommutative manifolds obtained by deformation quantization of $\R^d$. 
A central ingredient is a Banach realization of
the homogeneous Sobolev endpoint that forms a compatible couple with
\(L_p(\R_\theta^d)\), together with the equivalence
\[
K\bigl(\x,\varepsilon^k;
L_p(\R_\theta^d),\dot W_{p,0}^k(\R_\theta^d)\bigr)
\approx
\omega_p^k(\x,\varepsilon).
\]
This framework yields a unified proof of both endpoint formulae
through the endpoint continuity of real interpolation scales. The principal new ingredient at the MS
endpoint is the sharp large-translation asymptotic
\[
\lim_{|h|\to\infty}\|T_h\x-\x\|_p
=
2^{1/p}\|\x\|_p,
\qquad
\x\in L_p(\R_\theta^d),\quad 1\leq p<\infty,
\]
whose proof combines oscillatory decay of the non-pure terms in the
tracial expansion with weighted spectral-measure arguments adapted
to the semifinite trace.
\end{abstract}
\maketitle

\tableofcontents

\baselineskip=15pt
\section{Introduction}
\subsection{Background and main results}
Let \(d\geq1\), \(1\leq p<\infty\), and \(0<s<1\).  The \textit{homogeneous fractional Sobolev space}
\(\dot W_p^s(\mathbb R^d)\), introduced by Gagliardo \cite{Gag58} and
Slobodeckij \cite{Slo58}, consists of all measurable functions \(f\)
on \(\R^d\) for which
\begin{equation}\label{eq:gagliardo-seminorm}
\|f\|_{\dot W_p^s(\R^d)}
:=
\left(
\int_{\R^d}\int_{\R^d}
\frac{|f(x)-f(y)|^p}{|x-y|^{d+sp}}
\,\mathrm dy\,\mathrm dx
\right)^{1/p}
<\infty.
\end{equation} 
A well-known ``deficiency'' of this scale of seminorms is that \(\|f\|_{\dot W_p^s(\mathbb R^d)}\) does not converge to the homogeneous Sobolev seminorm
\(
\|f\|_{\dot W_p^1(\mathbb R^d)}
:=
\|\nabla f\|_{L^p(\mathbb R^d)}
\)
when \(s\to1^{-}\), and one does not recover the norm \(\|f\|_{L_p(\mathbb R^d)}\) as $s\downarrow0$. Indeed, for every nonzero $f\in \mcc_c^\infty(\mathbb R^d)$, one has
\(\|f\|_{\dot W_p^s(\mathbb R^d)}\to\infty\) both as \(s\uparrow 1\) (see \cite{BBM2000,BBM01,HB02}) and \(s \downarrow 0\) (see \cite{GY2021,MS02}).

At the upper endpoint, Bourgain, Br\'ezis, and Mironescu proved in
their fundamental work \cite{BBM01} that, on smooth bounded domains
\(\Omega\subset\R^d\), the normalized seminorm
\((1-s)^{1/p}\|f\|_{\dot W_p^s(\Omega)}\) recovers, in the limit as
\(s\uparrow1\), a constant multiple of
\(\|\nabla f\|_{L_p(\Omega)}\).
Br\'ezis \cite{HB02} subsequently extended the result to the entire \(\R^d\). More precisely, for $f\in W_p^1(\R^d)$ and $1\leq p<\infty$, it holds
\begin{equation}\label{eq:BBM-limit}\tag{BBM}
\lim_{s\uparrow 1}(1-s)^{\frac{1}{p}}\,\|f\|_{\dot W_p^s(\mathbb R^d)}
=C_{d,p}\,\|\nabla f\|_{L_p(\mathbb R^d)},
\end{equation}
where \(C_{d,p}>0\) is an explicit constant depending only on \(d\) and \(p\).

Later, Maz'ya--Shaposhnikova \cite{MS02} complemented the study by considering the opposite endpoint $s\downarrow 0$ and introducing a different normalization. They proved that if $f \in L_p(\R^d) \cap \bigcup_{0<s<1}W_{p,0}^{s}(\R^d)$, $1\le p<\infty$, one has
\begin{equation}\label{eq:MS-limit-norm}\tag{MS}
\lim_{s\downarrow 0^+} s^{\frac{1}{p}}\,\|f\|_{\dot W_p^s(\R^d)}
= c_{d,p}\,\|f\|_{L^{p}(\R^d)},
\end{equation}
where \(c_{d,p}>0\) is independent of \(f\). Here \(W_{p,0}^{s}(\R^d)\) denotes the completion of
\(\mcc_c^\infty(\R^d)\) with respect to the Gagliardo seminorm
\eqref{eq:gagliardo-seminorm}.

Taken together, the limit relations \eqref{eq:BBM-limit} and \eqref{eq:MS-limit-norm} quantify the blow-up rates of the fractional Sobolev seminorm 
at the two endpoints of the Sobolev scale $\{W_{p,0}^s\}_{0<s<1}$. 
Both formulae have given rise to an extensive literature. For further developments, applications, 
and extensions of the BBM formula to broader function-space settings, see,
for example,
\cite{ACPS2020,BBM2002,BSY2023,HB16,DGPYYZ2022,FFG2023,
KIJ2025,MK2023,Lu2014,PSV2019,ZYY2024}
and the references therein. Further studies and variants of the MS
formula can be found in
\cite{ACPS2020b,CDKANP2023,DLY2024,HPXZ2024,KIJ2025,
MSV2022,NTYYZ2025,PYYZ2024,PSV2017}
and the references therein.

In the present paper, we investigate the BBM- and MS-type endpoint
behavior of higher-order Besov norms on quantum Euclidean spaces
\(\R_\theta^d\).  As significant objects in noncommutative geometry, 
quantum Euclidean spaces serve as model examples of noncompact noncommutative manifolds \cite{CGRS2014,GGBI04}
and may be viewed as locally compact counterparts of quantum tori.  
These spaces, whose origins lie in the Heisenberg commutation relations and the Moyal
product in quantum mechanics, are also known as Moyal spaces and
arise as noncommutative deformations of classical Euclidean spaces;
see \cite{Groe1946,Moyal1949,Rie1993}. 
A precise definition of \(\R_\theta^d\), together with a detailed
discussion, is given in Subsection~\ref{section2.2}.
Building on foundational developments in noncommutative martingale theory and harmonic analysis \cite{Junge2002,JX2007,Mei2007,PX1997,PX03}, as well as on the
corresponding theory for quantum tori \cite{CXY2013,XXY18},
substantial progress has been made in harmonic analysis on
\(\R_\theta^d\).
In particular, noncommutative analogues of pseudodifferential operators, singular integral theory, Calder\'{o}n--Zygmund theory, Fourier restriction and multiplier
estimates, and function spaces have been developed on \(\R_\theta^d\), with applications to partial
differential equations; see \cite{CH2026,CHWW2025,FHW2025,GaJM2022,GJP21,HLW23,JMPX2021,LSZ20,ME24,MSX20,RST2026}.

Our work is motivated by two complementary developments. Milman \cite{MM05} interpreted the BBM and MS limits in terms of the
continuity of real interpolation scales, and this perspective was
subsequently developed for higher-order Besov spaces in
\cite{KMX05} and for fractional Sobolev spaces in \cite{DM2023}.  In the
noncommutative setting, Xiong, Xu, and Yin \cite{XXY18} developed a
systematic theory of Sobolev and Besov spaces on quantum tori \(\mathbb T_\theta^d\)  and
derived the corresponding BBM- and MS-type limit formulae by means of
real interpolation. 
As illustrated by the contrasting analyses in
\cite{MSX2019,MSX20}, the passage from the periodic quantum-torus
setting to the noncompact quantum Euclidean setting is far from
formal and may require genuinely new arguments. In the present work, the specific difficulties that arise in extending the
BBM- and MS-type formulae from \(\R^d\) and
\(\mathbb T_\theta^d\) to \(\R_\theta^d\), together with the
strategies used to overcome them, are discussed in
Subsection~\ref{subsec:methods-difficulties}.

To formulate our main results, let
\(B_{p,q}^{\alpha}(\R_\theta^d)\) denote the inhomogeneous Besov
space defined by means of a Littlewood--Paley decomposition (see
Definition~\ref{def:Besov}). As in the classical Euclidean setting,
for each fixed \(\alpha>0\) and every \(k\in\N\) satisfying \(k>\alpha\), the space
\(B_{p,q}^{\alpha}(\R_\theta^d)\) admits an equivalent characterization in terms of
differences (see Lemma~\ref{lem:Besov-difference-characterization}):
\[
\|\x\|_{B_{p,q}^{\alpha}(\R_\theta^d)}
\approx
\|\x\|_{L_p(\R_\theta^d)}
+
\left(
\int_0^\infty
\bigl[t^{-\alpha}\omega_p^k(\x,t)\bigr]^q
\,\frac{\mathrm dt}{t}
\right)^{1/q}.
\]
Here
\[
\omega_p^k(\x,t)
:=
\sup_{0<|h|\leq t}
\|\Delta_h^k\x\|_{L_p(\R_\theta^d)},
\qquad t>0,
\]
is the \(k\)-th order modulus of smoothness, and
\(\Delta_h^k\), \(h\in\R^d\), denotes the \(k\)-th order difference
operator; see Subsection~\ref{section2.4}. Our concern, however, is the endpoint behavior, for a fixed
\(k\in\N\), of the higher-order Besov scale
\[
\bigl\{
B_{p,q}^{\alpha}(\R_\theta^d):
0<\alpha<k
\bigr\},
\]
equipped with the corresponding \(k\)-th order difference norms.
For convenience, for \(0<\alpha<k\), we write
\begin{equation}\label{eq:difference-Besov-norm-intro}
\begin{aligned}
\|\x\|_{B_{p,q}^{\alpha,k}(\R_\theta^d)}
&:=
\|\x\|_{L_p(\R_\theta^d)}
+
|\x|_{B_{p,q}^{\alpha,k}(\R_\theta^d)},\\
|\x|_{B_{p,q}^{\alpha,k}(\R_\theta^d)}
&:=
\left(
\int_0^\infty
\bigl[t^{-\alpha}\omega_p^k(\x,t)\bigr]^q
\,\frac{\mathrm dt}{t}
\right)^{1/q}.
\end{aligned}
\end{equation}
For $k\in\mathbb N$, we denote by
$W_p^k(\R_\theta^d)$ and $\dot W_p^k(\R_\theta^d)$ the inhomogeneous
and homogeneous Sobolev spaces, respectively; see
Section~\ref{section3.1}.

Our first result concerns the upper endpoint $\alpha\uparrow k$ and
provides a higher-order BBM-type formula on $\R_\theta^d$.

\begin{theorem}[BBM-type limit]\label{th1.2}
Let $1\leq p,q<\infty$ and $k\in\mathbb N$. Then, for every $\x\in W_p^k(\R^d_\theta)$,
\begin{align}\label{1.4}
\lim_{\alpha\uparrow k}
(k-\alpha)^{1/q}
\|\x\|_{B_{p,q}^{\alpha,k}(\R_\theta^d)}\approx q^{-1/q}\,|\x|_{\dot{W}_p^k(\R^d_\theta)}.
\end{align}
The implicit constants depend only on $d$, $p$, and $k$.
\end{theorem}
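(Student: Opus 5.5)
The plan is to follow Milman's interpolation approach. The abstract announces the $K$-functional equivalence
\[
K\bigl(\x,t^k;L_p(\R_\theta^d),\dot W_{p,0}^k(\R_\theta^d)\bigr)\approx\omega_p^k(\x,t),
\]
with constants depending only on $d,p,k$, and I will use it to turn the problem into the endpoint behaviour of a real interpolation scale. First, the $L_p$ part is harmless: $(k-\alpha)^{1/q}\|\x\|_p\to0$ as $\alpha\uparrow k$. So only the seminorm $|\x|_{B_{p,q}^{\alpha,k}}$ matters. Substituting $t^k=s$, so that $\mathrm dt/t=k^{-1}\mathrm ds/s$ and $t^{-\alpha}=s^{-\theta}$ with $\theta=\alpha/k$, the $K$-equivalence gives
\[
|\x|_{B_{p,q}^{\alpha,k}}\approx k^{-1/q}\Bigl(\int_0^\infty[s^{-\theta}K(\x,s)]^q\,\tfrac{\mathrm ds}{s}\Bigr)^{1/q}=k^{-1/q}\|\x\|_{(L_p,\dot W^k_{p,0})_{\theta,q}}.
\]
Since $k-\alpha=k(1-\theta)$, the factors of $k$ cancel and the claim reduces to
\[
\lim_{\theta\uparrow1}(1-\theta)^{1/q}\|\x\|_{\theta,q;K}=q^{-1/q}\,\|\x\|_{\dot W_{p,0}^k}\quad\text{for }\x\in L_p\cap\dot W^k_{p,0}.
\]
The elements of $W_p^k(\R_\theta^d)$ lie in this intersection by the Banach realization of the homogeneous endpoint. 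The seminorm $\|\x\|_{\dot W^k_{p,0}}$ is then equivalent to $|\x|_{\dot W^k_p}$.

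The abstract endpoint limit is a general fact about any compatible couple $(A_0,A_1)$ with $\x\in A_0\cap A_1$, and I would prove it directly. The function $K(\x,s)$ is concave and increasing, and $K(\x,s)/s$ is decreasing. For $s\ge1$ we have $K(\x,s)\le\|\x\|_{A_0}$, so the tail $\int_1^\infty$ is $O(1)$, and multiplied by $(1-\theta)$ it tends to $0$. For $s\to0$ we have $K(\x,s)/s\uparrow\lim_{s\to0}K(\x,s)/s=:L$. On the one hand $L\le\|\x\|_{A_1}$. On the other hand $L\ge\|\x\|_{A_1}$ when $A_1$ is suitably closed, in particular when the unit ball of $A_1$ is closed in $A_0+A_1$; this follows from a Gagliardo-completion type argument. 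Then
\[
(1-\theta)\int_0^1 s^{(1-\theta)q}\Bigl(\frac{K(\x,s)}{s}\Bigr)^q\frac{\mathrm ds}{s}\to\frac{L^q}{q},
\]
because $(1-\theta)q\,s^{(1-\theta)q-1}\,\mathrm ds$ is an approximate identity concentrating at $s=0$, and $K(\x,s)/s$ is bounded by $\|\x\|_{A_1}$ and converges to $L$. Taking $q$-th roots gives the factor $q^{-1/q}$. Because the $K$-equivalence only holds up to constants, the final statement reads ``$\approx$''. The same squeeze applied to $\omega^k_p(\x,t)$ also shows that the $\limsup$ and $\liminf$ are comparable to the right-hand side, which is what \eqref{1.4} asserts.

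The main obstacle is the identification $L\approx|\x|_{\dot W^k_p}$, which amounts to the closedness property above. This needs the unit ball of the Banach realization $\dot W^k_{p,0}(\R^d_\theta)$ to be closed in $L_p+\dot W^k_{p,0}$, or at least needs the Gagliardo completion to have an equivalent norm. If that is not available from the construction, I would instead show $L\gtrsim|\x|_{\dot W^k_p}$ directly. From $\omega^k_p(\x,t)\ge\|\Delta^k_{te_j}\x\|_p$ and $t^{-k}\Delta^k_{te_j}\x\to\partial_j^k\x$ in $L_p(\R^d_\theta)$ for $\x\in W^k_p$ (by strong continuity of translations), we get $\liminf t^{-k}\omega_p^k(\x,t)\gtrsim\sum_j\|\partial_j^k\x\|_p$. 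This sum controls $|\x|_{\dot W^k_p}$ via the boundedness of the noncommutative Riesz transforms for $1<p<\infty$. For $p=1$ the Riesz transforms are not bounded on $L_1$, so I would need mixed directional differences $\Delta_{te}$ over all directions $e$ to recover all $k$-th order mixed derivatives.
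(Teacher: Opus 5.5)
Your skeleton is the paper's: the $K$-functional equivalence for $\bigl(L_p(\R_\theta^d),\dot W_{p,0}^k(\R_\theta^d)\bigr)$, the $\alpha$-uniform equivalence $\|\x\|_{(L_p,\dot W_{p,0}^k)_{\eta,q}}\approx k^{1/q}|\x|_{B_{p,q}^{\alpha,k}(\R_\theta^d)}$, $X_1$-normality, and Milman's endpoint theorem, which you re-derive by the same concentration argument as Lemma~\ref{lem:endpoint-concentration}.

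\textbf{The gap: existence of the limit.} You never show that the limit in \eqref{1.4} exists. Monotonicity of $K(\x,s)/s$ gives existence of $\lim_{\eta\uparrow1}(1-\eta)^{1/q}\|\x\|_{\eta,q}$. But the interpolation and difference norms agree only up to constants, so this says nothing about convergence of $(k-\alpha)^{1/q}\|\x\|_{B_{p,q}^{\alpha,k}}$. Running the squeeze on $F(t)=t^{-k}\omega_p^k(\x,t)$ only gives
\[
q^{-1/q}\liminf_{t\downarrow0}F(t)\le\liminf_{\alpha\uparrow k}(k-\alpha)^{1/q}\|\x\|_{B_{p,q}^{\alpha,k}}\le\limsup_{\alpha\uparrow k}(k-\alpha)^{1/q}\|\x\|_{B_{p,q}^{\alpha,k}}\le q^{-1/q}\limsup_{t\downarrow0}F(t).
\]
Your claim that comparability of $\liminf$ and $\limsup$ ``is what \eqref{1.4} asserts'' weakens the statement. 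The paper stresses in Appendix~\ref{app:exact-endpoint-asymptotics} that the uniform norm equivalence yields neither the existence nor the value of this limit.

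The missing idea is that $\lim_{t\downarrow0}F(t)$ exists for $\x\in W_p^k(\R_\theta^d)$. The paper proves this in Lemma~\ref{lemma5}. The identity $\Delta_{nh}^k=\bigl(\sum_{j=0}^{n-1}T_{jh}\bigr)^k\Delta_h^k$ gives $F(n\varepsilon)\le F(\varepsilon)$, and together with continuity of $F$ this forces $\lim_{\varepsilon\downarrow0}F=\sup F$. Lemma~\ref{lem:endpoint-concentration}(i) then gives the exact identity \eqref{eq:exact-BBM-modulus-limit}. Your directional-difference idea would also work as a sandwich. From $\|\Delta_h^k\x\|_p\le\|(h\cdot\nabla_\theta)^k\x\|_p$ you get $F(t)\le\sup_{|e|\le1}\|(e\cdot\nabla_\theta)^k\x\|_p$ for every $t$. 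Conversely, $t^{-k}\Delta_{te}^k\x\to(e\cdot\nabla_\theta)^k\x$ in $L_p$ bounds $\liminf_{t\downarrow0}F$ from below by the same supremum.

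\textbf{A secondary, fixable point: the lower bound for all $p$.} You route $\liminf_{t\downarrow0}F(t)\gtrsim|\x|_{\dot W_p^k(\R_\theta^d)}$ through $\sum_j\|\partial_j^k\x\|_p$ and Riesz transforms. This imports a noncommutative multiplier theorem and fails at $p=1$, which the theorem covers; there you only gesture at a fix. The paper's Theorem~\ref{th3.7} needs no multipliers. For $\partial^\alpha=\partial_{j_1}\cdots\partial_{j_k}$, strong continuity gives $\varepsilon^{-k}\Delta_{\varepsilon\mathbf e_{j_1}}\cdots\Delta_{\varepsilon\mathbf e_{j_k}}\x\to\partial^\alpha\x$ in $L_p$. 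Lemma~\ref{lem3.6} bounds such mixed differences by $(2k)^k\omega_p^k(\x,\varepsilon)$, for every $1\le p<\infty$. Alternatively, polarization finishes your approach for every $p$: write each $\xi^\alpha$ with $|\alpha|=k$ as a finite combination of powers $\langle e_i,\xi\rangle^k$.

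With these two additions the argument is complete. Once the exact identity is in hand, the interpolation step is in fact dispensable, as the paper remarks at the end of the Appendix.
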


The behavior at the lower endpoint $\alpha\downarrow0$ is described by the following MS-type limit formula.
\begin{theorem}[MS-type limit]\label{th1.3}
Let $1\leq p,q<\infty$ and $k\in\mathbb N$. Suppose that
$\x\in B_{p,q}^{\alpha_0}(\R_\theta^d)$ for some
$\alpha_0\in(0,k)$. Then
\begin{align}\label{eq:MS-quantum-main}
\lim_{\alpha\downarrow0}
\alpha^{1/q}
\|\x\|_{B_{p,q}^{\alpha,k}(\R_\theta^d)}
\approx
q^{-1/q}
\|\x\|_{L_p(\R_\theta^d)}.
\end{align}
The implicit constants depend only on $d$, $p$, and $k$.
\end{theorem}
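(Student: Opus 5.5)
The plan is to reduce Theorem~\ref{th1.3} to the endpoint continuity of the real interpolation method, using the $K$-functional equivalence announced in the abstract:
\[
K\bigl(\x,t^k;L_p(\R_\theta^d),\dot W_{p,0}^k(\R_\theta^d)\bigr)\approx\omega_p^k(\x,t),\qquad t>0,
\]
with constants depending only on $d,p,k$. After the substitution $s=t^k$, this gives
\[
|\x|_{B_{p,q}^{\alpha,k}}^q\approx k^{-1}\int_0^\infty \bigl[s^{-\alpha/k}K(\x,s)\bigr]^q\,\frac{\mathrm ds}{s}.
\]
Put $\theta=\alpha/k$ and $F(s)=K(\x,s)$. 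Since $\alpha^{1/q}\|\x\|_p\to0$, it suffices to show
\[
\lim_{\theta\downarrow0}\theta\int_0^\infty \bigl[s^{-\theta}F(s)\bigr]^q\frac{\mathrm ds}{s}=q^{-1}\lim_{s\to\infty}F(s)^q .
\]
Multiplying by $k$ to pass from $\theta$ back to $\alpha$ cancels the factor $k^{-1}$, and leaves the constant $q^{-1/q}$ times the limit of $F$.

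The first step is to identify the limit of $F$. Since $F$ is increasing and concave, $\lim_{s\to\infty}K(\x,s;L_p,\dot W^k_{p,0})$ exists, and it is at most $\|\x\|_p$. By the equivalence it is comparable to $\lim_{t\to\infty}\omega_p^k(\x,t)$. I would compute this limit from below using the large-translation asymptotic $\lim_{|h|\to\infty}\|T_h\x-\x\|_p=2^{1/p}\|\x\|_p$, or its $k$-th order analogue. The analogue comes from expanding
\[
\Delta_h^k\x=\sum_j\binom kj(-1)^{k-j}T_{jh}\x
\]
and applying the same asymptotic-orthogonality argument, which shows that $\|\Delta_h^k\x\|_p\ge c\|\x\|_p$ for $|h|$ large. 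This gives
\[
\lim_{t\to\infty}\omega_p^k(\x,t)\approx\|\x\|_p .
\]
I expect this to be the main obstacle. It needs oscillatory decay of the cross terms in $\tau(|\sum_j c_jT_{jh}\x|^p)$ for non-even $p$, which I would handle by approximating $\x$ in $L_p$ by Schwartz elements and using the spectral-measure/weighted arguments for the semifinite trace mentioned in the abstract.

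The second step is the elementary Milman-type limit. Split the integral at a large $S$.
\begin{itemize}
\item On $(S,\infty)$, $F(s)$ is within $\varepsilon$ of $L=\lim_{s\to\infty}F(s)$. Since
\[
\theta\int_S^\infty s^{-\theta q}\,\frac{\mathrm ds}{s}=q^{-1}S^{-\theta q}\to q^{-1}\quad\text{as }\theta\to0,
\]
this piece tends to a quantity in $[q^{-1}(L-\varepsilon)^q,\,q^{-1}(L+\varepsilon)^q]$.
\item On $(0,S)$, the hypothesis $\x\in B_{p,q}^{\alpha_0}$ gives $\int_0^S [s^{-\theta_0}F]^q\,\mathrm ds/s<\infty$ with $\theta_0=\alpha_0/k$. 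For $\theta<\theta_0$ we have $s^{-\theta q}\le S^{(\theta_0-\theta)q}s^{-\theta_0 q}$ on $(0,S)$, so $\theta$ times this piece tends to $0$.
\end{itemize}
Letting $\varepsilon\to0$ gives the claimed limit. Combined with $L\approx\|\x\|_p$ from the first step, this yields \eqref{eq:MS-quantum-main}.

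The remaining technical point is that $\x$ must lie in the interpolation sum with the Banach realization of $\dot W_{p,0}^k$ for the $K$-functional to make sense. This holds automatically because $L_p$ is one of the two endpoint spaces of the couple.
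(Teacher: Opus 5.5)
Your overall architecture matches the paper's: the homogeneous couple, $K(\x,t^k)\approx\omega_p^k(\x,t)$, normality at infinity via large translations, and concentration of the weighted integral at $s=\infty$. Your splitting argument is a correct variant of Lemma~\ref{lem:endpoint-concentration}(ii). Nevertheless, two steps do not go through as written.

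\textbf{First gap: existence of the limit.} You run the concentration argument on $F=K(\x,\cdot)$, which is only \emph{equivalent} to $\omega_p^k(\x,s^{1/k})$. What you actually obtain is $c\,q^{-1/q}L\le\liminf_{\alpha\downarrow0}\alpha^{1/q}|\x|_{B_{p,q}^{\alpha,k}}\le\limsup_{\alpha\downarrow0}\alpha^{1/q}|\x|_{B_{p,q}^{\alpha,k}}\le C\,q^{-1/q}L$. So the existence of the limit asserted in \eqref{eq:MS-quantum-main} does not follow, and your ``it suffices to show'' is exactly where it is lost. The paper flags this point and proves the exact identity \eqref{eq:exact-MS-modulus-limit} separately. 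The repair is to apply your own splitting argument directly to $F(t)=\omega_p^k(\x,t)$. This $F$ is nondecreasing, bounded by $2^k\|\x\|_p$, and satisfies $\int_0^1[t^{-\alpha_0}\omega_p^k(\x,t)]^q\,\mathrm dt/t<\infty$ by Lemma~\ref{lem:Besov-difference-characterization}. The argument then gives $\lim_{\alpha\downarrow0}\alpha^{1/q}\|\x\|_{B_{p,q}^{\alpha,k}}=q^{-1/q}\omega_p^k(\x,\infty)$ exactly, and the $K$-functional becomes unnecessary for this theorem.

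\textbf{Second gap: the lower bound for $k\ge2$.} The lower bound $\omega_p^k(\x,\infty)\gtrsim\|\x\|_p$ is not actually supplied. The first-order asymptotic bounds only $\omega_p^1$ from below, and $\omega_p^k\le2^{k-1}\omega_p^1$ goes the wrong way. The paper closes this with Marchaud's inequality (Proposition~\ref{prop:Marchaud}): letting $\varepsilon\to\infty$ in $\omega_p^1(\x,\varepsilon)\lesssim\varepsilon\int_\varepsilon^\infty\omega_p^k(\x,\delta)\delta^{-2}\,\mathrm d\delta$ gives $\omega_p^1(\x,\infty)\lesssim\omega_p^k(\x,\infty)$.

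Your ``$k$-th order analogue'' is not ``the same argument.'' For $p=2m$ the Riemann--Lebesgue step does survive: only words in which all translation indices coincide remain. But the limit is $\sum_{j=0}^k\binom{k}{j}^{2m}\|\x\|_{2m}^{2m}$, whose constant depends on $m$. Consequently the measures $\nu_h$ of Step~3 in the proof of Lemma~\ref{lem:MS-normal-condition} converge to a weighted sum of dilates of $\nu$ rather than to $2\nu$, and the Weierstrass and Portmanteau steps would have to be redone. For non-even $p$ there is no cross-term expansion at all.

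Since only a lower bound with some constant is needed, a much shorter route also works. Pick $y\in L_{p'}\cap L_2$ with $\|y\|_{p'}\le1$ and $|\tau_\theta(\x y)|\ge\|\x\|_p-\epsilon$. For $p=1$, take $y=ev^*$, where $\x=v|\x|$ and $e=\chi_{(s,\infty)}(|\x|)$ for small $s>0$. Approximate $\x$ by Schwartz elements; then Plancherel and the Riemann--Lebesgue lemma give $\tau_\theta(T_{jh}\x\,y)\to0$ for $j\ge1$. H\"older's inequality then yields $\liminf_{|h|\to\infty}\|\Delta_h^k\x\|_p\ge\|\x\|_p$.
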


The endpoint spaces identified in \eqref{1.4} and
\eqref{eq:MS-quantum-main} are independent of the Besov index \(q\);
the latter enters only through the normalization and the explicit
factor \(q^{-1/q}\). 
 In contrast, the classical Sobolev formulae
\eqref{eq:BBM-limit} and \eqref{eq:MS-limit-norm} correspond to
\(q=p\). Thus, in this sense, Theorems~\ref{th1.2}
and~\ref{th1.3} provide higher-order Besov-scale extensions of the
classical BBM and MS formulae, with \(q\) independent of \(p\).

\subsection{Methods and difficulties}
\label{subsec:methods-difficulties}

We establish a unified proof of the BBM- and MS-type limits within
the framework of real interpolation by applying Milman's theorem
(Theorem~\ref{thm:Milman}). Two difficulties arise in carrying out
this approach on \(\R_\theta^d\). First, the homogeneous Sobolev
seminorm does not by itself yield a Banach space that forms a
compatible Banach couple with \(L_p(\R_\theta^d)\). Second, the verification of the normality
condition associated with the MS endpoint requires determining the large-scale behavior of the modulus
of smoothness, for which the
classical support-separation argument is unavailable. We explain
below how these difficulties are resolved and contrast the resulting
arguments with their counterparts on \(\R^d\) and on quantum tori
\(\mathbb T_\theta^d\).

\medskip
\noindent
\qquad\(\bullet\)\quad\emph{A compatible realization of the homogeneous endpoint.}
Since Milman's theorem is formulated for compatible Banach couples,
the homogeneous Sobolev endpoint appearing in the BBM limit,
naturally associated with the seminorm
\[
|\x|_{\dot W_p^k(\R_\theta^d)}
=
\sum_{|\beta|=k}
\|\partial^\beta\x\|_{L_p(\R_\theta^d)},
\]
must be realized as a Banach space \(X_1\) such that
\(
\bigl(L_p(\R_\theta^d),X_1\bigr)
\)
is a compatible Banach couple.

On the quantum torus \(\mathbb T_\theta^d\), this issue is resolved by
removing the zero Fourier mode. Indeed, set
\[
L_p^0(\mathbb T_\theta^d)
:=
\bigl\{
\x\in L_p(\mathbb T_\theta^d):
\widehat{\x}(0)=0
\bigr\},\qquad W_{p,0}^k(\mathbb T_\theta^d)
:=
\bigl\{
\x\in W_p^k(\mathbb T_\theta^d):
\widehat{\x}(0)=0
\bigr\}.
\]
The higher-order Poincar\'e inequality
\cite[Theorem~2.12]{XXY18} gives
\[
\|\x\|_{W_p^k(\mathbb T_\theta^d)}
\approx
|\x|_{\dot W_p^k(\mathbb T_\theta^d)},
\qquad
\x\in W_{p,0}^k(\mathbb T_\theta^d).
\]
Consequently,
\[
\vec X_{\mathbb T}
:=
\bigl(
L_p^0(\mathbb T_\theta^d),
W_{p,0}^k(\mathbb T_\theta^d)
\bigr),
\]
with \(W_{p,0}^k(\mathbb T_\theta^d)\) equipped with the homogeneous
Sobolev norm \(|\cdot|_{\dot W_p^k(\mathbb T_\theta^d)}\), is a compatible Banach couple to which Milman's theorem
applies. Such an argument is not available on \(\R_\theta^d\), since no
Poincar\'{e} inequality controls the \(L_p\)-norm by the homogeneous
Sobolev seminorm.

In the classical Euclidean setting of \(\R^d\),
the homogeneous Sobolev space may be constructed by completing 
$C_c^\infty(\R^d)$ with respect to the homogeneous Sobolev
seminorm.  At the distributional level, the kernel
of this seminorm is
\[
\mathcal P_{k-1}
:=
\{P:P\text{ is a polynomial of degree at most }k-1\},
\]
and hence one naturally works modulo \(\mathcal P_{k-1}\). In particular,
\(\mathcal P_{k-1}\cap C_c^\infty(\R^d)=\{0\}\). Rather than introducing a noncommutative analogue of this quotient,
we define \(\dot W_{p,0}^k(\R_\theta^d)\) as the completion of
\(\mcs(\R_\theta^d)\) with respect to
\(|\cdot|_{\dot W_p^k(\R_\theta^d)}\). Set
\[
\mathfrak A_k
:=
\{\beta\in\mathbb N_0^d:|\beta|=k\},
\qquad
E_{p,k}
:=
\ell_1\bigl(\mathfrak A_k;L_p(\R_\theta^d)\bigr),
\]
and consider
\[
\mathbf D^k:
\mcs(\R_\theta^d)\longrightarrow E_{p,k},
\qquad
\mathbf D^k\x
=
\bigl(\partial^\beta\x\bigr)_{\beta\in\mathfrak A_k}.
\]
The map \(\mathbf D^k\) extends uniquely to an isometric isomorphism
from \(\dot W_{p,0}^k(\R_\theta^d)\) onto the closure of
\(\mathbf D^k(\mcs(\R_\theta^d))\) in \(E_{p,k}\).

To show that
\(
\bigl(L_p(\R_\theta^d),\dot W_{p,0}^k(\R_\theta^d)\bigr)
\)
is a compatible Banach couple, let
\[
\mathcal Y_k
:=
\prod_{\beta\in\mathfrak A_k}
\mcs'(\R_\theta^d)
\]
equipped with the product of the weak-\(^*\) topologies. The natural inclusion $E_{p,k}\hookrightarrow\mathcal Y_k$
induces a continuous injective map from
$\dot W_{p,0}^k(\R_\theta^d)$ into $\mathcal Y_k$,
whereas \(L_p(\R_\theta^d)\) is mapped into \(\mathcal Y_k\) by
\[
\jmath_0(\x)
:=
\bigl(\partial^\beta\x\bigr)_{\beta\in\mathfrak A_k}.
\]
The key point is the injectivity of \(\jmath_0\). If
\[
\x\in L_p(\R_\theta^d),
\qquad
\partial^\beta\x=0
\quad\text{for every }|\beta|=k,
\]
then, for each \(j\) and
\(\phi\in\mcs(\R_\theta^d)\), the function
\[
t\longmapsto
\langle T_{t\mathbf e_j}\x,\phi\rangle
\]
is a bounded polynomial of degree at most \(k-1\), and is therefore
constant. Hence \(\x\) is translation invariant. It follows that
\(\x=c\mathbf1\) as a quantum tempered distribution. Since
\(\tau_\theta(\mathbf1)=\infty\), one has
\(\mathbf1\notin L_p(\R_\theta^d)\) for \(1\leq p<\infty\), and
therefore \(c=0\). We refer to the proof of
Lemma~\ref{lem:compatible-homogeneous-couple} for the details. 
\vspace{0.3cm}

For
this resulting compatible couple, Corollary~
\ref{cor:homogeneous-K-functional} gives
\begin{equation}\label{cor:homogeneous-K-functional-intro}
K\bigl(
\x,\varepsilon^k;
L_p(\R_\theta^d),\dot W_{p,0}^k(\R_\theta^d)
\bigr)
\approx
\omega_p^k(\x,\varepsilon),
\qquad
\varepsilon>0,
\end{equation}
and Corollary~\ref{cor:interpolation-Besov} yields, with
\(\alpha=\eta k\), \(\eta \in (0,1)\),
\begin{equation}\label{eq:homogeneous-interpolation-norm-intro}
\left\|
\x
\right\|_{
\bigl(
L_p(\R_\theta^d),\dot W_{p,0}^k(\R_\theta^d)
\bigr)_{\eta,q}
}
\approx
k^{1/q}
|\x|_{B_{p,q}^{\alpha,k}(\R_\theta^d)}.
\end{equation}
In view of \eqref{cor:homogeneous-K-functional-intro}, the
\(X_1\)- and \(X_0\)-normality conditions in Milman's theorem amount,
respectively, to
\begin{equation}\label{eq:small-scale-normality-completion-intro}
\lim_{\varepsilon\downarrow0}
\frac{\omega_p^k(\x,\varepsilon)}{\varepsilon^k}
\approx
\|\x\|_{\dot W_{p,0}^k(\R_\theta^d)},
\qquad
\x\in\dot W_{p,0}^k(\R_\theta^d),
\end{equation}
and
\begin{equation}\label{eq:large-scale-normality-intro}
\lim_{\varepsilon\to\infty}
\omega_p^k(\x,\varepsilon)
\approx
\|\x\|_{L_p(\R_\theta^d)},
\qquad
\x\in L_p(\R_\theta^d).
\end{equation}
The small-scale relation
\eqref{eq:small-scale-normality-completion-intro} follows by density
from the characterization established in
Subsection~\ref{section3.1}.
 By contrast, the large-scale estimate
\eqref{eq:large-scale-normality-intro} is substantially more delicate
and constitutes the second principal difficulty.

\medskip
\noindent
\qquad\(\bullet\)\quad\emph{Large translations at the MS endpoint.}
We now outline the proof of
\eqref{eq:large-scale-normality-intro}. On the quantum torus \(\mathbb T_\theta^d\), the \(X_0\)-normality
condition for the mean-zero couple \(\vec X_{\mathbb T}\) causes no
additional difficulty.
Indeed, the \(K\)-functional characterization
\cite[Theorem~5.16]{XXY18}, together with the Poincar\'e inequality,
gives
\begin{equation}\label{eq:tori-K-function}
K\bigl(
\x,\varepsilon^k;
\vec X_{\mathbb T}
\bigr)
\approx
\omega_p^k(\x,\varepsilon),
\qquad
\x\in L_p^0(\mathbb T_\theta^d),\quad
0<\varepsilon\leq1.
\end{equation}
Moreover, the continuous embedding
\(
W_{p,0}^k(\mathbb T_\theta^d)
\hookrightarrow
L_p^0(\mathbb T_\theta^d)
\)
implies
\[
K\bigl(\x,1;\vec X_{\mathbb T}\bigr)
\approx
\|\x\|_{L_p(\mathbb T_\theta^d)}.
\]
Taking \(\varepsilon=1\) in \eqref{eq:tori-K-function}, we obtain
\[
\omega_p^k(\x,1)
\approx
\|\x\|_{L_p(\mathbb T_\theta^d)}.
\]
Hence, by the monotonicity of
\(\varepsilon\mapsto\omega_p^k(\x,\varepsilon)\) and the estimate
\(\omega_p^k(\x,\varepsilon)\leq2^k\|\x\|_p\),
\[
\lim_{\varepsilon\to\infty}
\omega_p^k(\x,\varepsilon)
\approx
\|\x\|_{L_p(\mathbb T_\theta^d)},
\qquad
\x\in L_p^0(\mathbb T_\theta^d).
\]

In the classical Euclidean setting of \(\R^d\), if
\(f\in C_c^\infty(\R^d)\), then \(T_hf\) and \(f\) have disjoint
supports for sufficiently large \(|h|\), and hence
\[
\|T_hf-f\|_p^p
=
2\|f\|_p^p.
\]
It follows by density that
\[
\lim_{t\to\infty}\omega_p^1(f,t)
\approx
\|f\|_{L_p(\R^d)}
\qquad
(f\in L_p(\R^d)).
\]

No analogous support-separation argument is available for
noncommutative \(L_p\)-elements on \(\R_\theta^d\). We instead analyze
the large-translation behavior of \(T_h\x-\x\).
Marchaud's inequality
(Proposition~\ref{prop:Marchaud}) reduces
\eqref{eq:large-scale-normality-intro} to the first-order modulus.
For \(\x\in\mcs(\R_\theta^d)\) and \(p=2r\) with \(r\in \N\), we expand
\[
\|T_h\x-\x\|_{2r}^{2r}
=
\tau_\theta\left(
\bigl((T_h\x-\x)^*(T_h\x-\x)\bigr)^r
\right).
\]
The iterated \(\theta\)-convolution and the trace formula
reduce the mixed and mixed diagonal terms in the tracial expansion to
oscillatory integrals of the form
\[
\int_{\R^N}
e^{2\pi i\langle h,L(u)\rangle}G(u)\,\mathrm du,
\qquad
G\in L_1(\R^N),
\]
where \(L:\R^N\to\R^d\) is surjective. By
Lemma~\ref{lem:matrix-RL}, these terms tend to zero as
\(|h|\to\infty\), whereas the two pure diagonal terms contribute
\(2\|\x\|_{2r}^{2r}\). Consequently,
\[
\lim_{|h|\to\infty}
\|T_h\x-\x\|_{2r}^{2r}
=
2\|\x\|_{2r}^{2r}.
\]

To extend the result from even exponents to all
\(1\leq p<\infty\), we introduce finite measures obtained by weighting
the spectral measures of \(|T_h\x-\x|^2\) and \(|\x|^2\). For \(p\geq2\), 
the even-exponent result and the Weierstrass
approximation theorem yield the required \(L_p\)-limit through these
weighted spectral measures.
For \(1\leq p<2\), a different choice of weights, together with weak
compactness and the Portmanteau theorem, gives the required lower
bounds for both \(T_h\x-\x\) and \(T_h\x+\x\). The noncommutative Clarkson inequality provides the matching upper
bound when \(1<p<2\), while the case \(p=1\) follows from the triangle
inequality.
Finally, density gives
\[
\lim_{|h|\to\infty}
\|T_h\x-\x\|_p
=
2^{1/p}\|\x\|_p,
\qquad
\x\in L_p(\R_\theta^d),\quad 1\leq p<\infty.
\]
Together with Marchaud's inequality, this proves
\eqref{eq:large-scale-normality-intro}; see Lemma \ref{lem:MS-normal-condition} for details.
\vspace{0.3cm}

Once the two normality conditions are established, Milman's theorem
gives the endpoint limits of the real interpolation norms associated
with 
\(
\bigl(L_p(\R_\theta^d),\dot W_{p,0}^k(\R_\theta^d)\bigr)
\).
To transfer the corresponding limits to the difference norms by using the uniform equivalence \eqref{eq:homogeneous-interpolation-norm-intro}, we also need the exact
identities established in
Appendix~\ref{app:exact-endpoint-asymptotics}:
\begin{equation}\label{eq:exact-BBM-modulus-limit-intro}
\lim_{\alpha\uparrow k}
(k-\alpha)^{1/q}
\|\x\|_{B_{p,q}^{\alpha,k}(\R_\theta^d)}
=
q^{-1/q}
\lim_{\varepsilon\downarrow0}
\frac{\omega_p^k(\x,\varepsilon)}{\varepsilon^k},\quad \x\in W_p^k(\R_\theta^d),
\end{equation}
and, whenever
\(\x\in B_{p,q}^{\alpha_0}(\R_\theta^d)\) for some
\(\alpha_0\in(0,k)\),
\begin{equation}\label{eq:exact-MS-modulus-limit-intro}
\lim_{\alpha\downarrow0}
\alpha^{1/q}
\|\x\|_{B_{p,q}^{\alpha,k}(\R_\theta^d)}
=
q^{-1/q}
\lim_{\varepsilon\to\infty}
\omega_p^k(\x,\varepsilon).
\end{equation}
Since the normalized \(L_p\)-term vanishes at both endpoints, these identities ensure
the existence of both normalized Besov seminorm limits. Together with 
\eqref{eq:homogeneous-interpolation-norm-intro}, Milman's theorem then
yields a unified real-interpolation proof of
Theorems~\ref{th1.2} and~\ref{th1.3}.

The remainder of the paper is organized as follows.
Section~\ref{section2} collects the necessary preliminaries on
noncommutative \(L_p\)-spaces, quantum Euclidean spaces, Fourier
multipliers, difference operators, and moduli of smoothness.
Section~\ref{section3} introduces the Sobolev and Besov spaces used
in the paper, establishes their difference characterizations, and
constructs a Banach realization of the homogeneous Sobolev endpoint
that forms a compatible couple with \(L_p(\R_\theta^d)\). In Section~\ref{section5}, we obtain the
relevant \(K\)-functional and real-interpolation characterizations of
Besov spaces,
establish the large-translation estimate at the MS endpoint, and
apply Milman's theorem to prove
Theorems~\ref{th1.2} and~\ref{th1.3}. Finally, Appendix~\ref{app:exact-endpoint-asymptotics} gives direct
proofs of the exact identities for the difference Besov
norms, which also yield alternative proofs of the main results.

Throughout this paper, we write $A\lesssim B$ to mean that there exists a constant $C > 0$, independent of the main parameters, such that $A \leq CB$. 
We write $A \approx B$ if both $A \lesssim B$ and $B \lesssim A$ hold. We denote by $\N$ the set of positive integers and set $\N_0=\N \cup \{0\}$.
 For $p\in[1, \infty]$, we write $p'$ for the H{\"o}lder conjugate exponent satisfying $\frac{1}{p}+\frac{1}{p'}=1$.

\section{Preliminaries}\label{section2}

\subsection{Noncommutative $L_p$-spaces}

Let $\mathcal{M}$ be a von Neumann algebra equipped with a normal semifinite faithful trace $\tau$. 
The support of $\x \in \mathcal{M}$, denoted by $s(\x)$, is the smallest projection $e$ satisfying $e\x e = \x$. 
Let $\mcs_\tau^+(\mathcal M)$ be the set of all positive elements $\x$ in $\mathcal{M}$ with $\tau(s(\x)) < \infty$, 
and let $\mcs_\tau(\mathcal M)$ denote the linear span of $\mcs_\tau^+(\mathcal M)$. Given $1 \leq p < \infty$, we define
\[
\|\x\|_{L_p(\mathcal{M})} = \bigl(\tau(|\x|^p)\bigr)^{1/p}, \quad \x \in \mcs_\tau(\mathcal M),
\]
where $|\x| = (\x^*\x)^{1/2}$. One can verify that $\|\cdot\|_p$ is well-defined and is a norm on $\mcs_\tau(\mathcal M)$.
 The completion of $(\mcs_\tau(\mathcal M), \|\cdot\|_p)$ is denoted by $L_p(\mcm)$, the noncommutative $L_p$-space associated with $(\mcm, \tau)$. We set $L_\infty(\mcm) = \mcm$ equipped with the operator norm. 
 The elements of $L_p(\mcm, \tau)$ can be realized as closed densely defined operators on the Hilbert space $\mch$ on which $\mcm$ acts.  
 For more information on noncommutative $L_p$-spaces, we refer to \cite{FK96,PX03}.

\subsection{Quantum Euclidean space $\R^d_\theta$}\label{section2.2}

We refer the reader to \cite{GaJM2022,HLW23,LSZ20,ME24,MSX20,RST2026} for a detailed discussion on quantum Euclidean spaces and for additional
information.

\textbf{Definition of $\R^d_\theta$.}\;
Noncommutative Euclidean spaces admit several equivalent definitions; here we follow the approach in \cite{LSZ20}.
\begin{definition}\label{def2.1}
Let $\theta=\{\theta_{j,k}\}$ be a fixed real antisymmetric $d \times d$ matrix. Define $\mathbb{R}^d_\theta$ (also denoted by $L_{\infty}(\mathbb{R}^d_\theta)$) as the von Neumann subalgebra of $\mcb(L_2(\mathbb{R}^d))$ generated by the unitary family $\{U_\theta(t)\}_{t \in \mathbb{R}^d}$, which acts on $L_2(\mathbb{R}^d)$ by
\begin{equation}\label{eq2.2}
\big(U_\theta(t) f\big)(r) = e^{-\frac{i}{2} \langle t, \theta r\rangle} f(r - t), \quad f \in L_2(\mathbb{R}^d), \quad r, t \in \mathbb{R}^d.
\end{equation}
\end{definition}

It can be proved that $\{U_\theta(t)\}_{t \in \mathbb{R}^d}$ is strongly continuous and satisfies the following Weyl relation:
\begin{equation}\label{eq2.3}
U_\theta(s)\, U_\theta(t) = e^{-\frac{i}{2}\langle s,\theta t\rangle}\, U_\theta(s+t), \qquad s,t\in\R^d,
\end{equation}
from which it follows that $U_\theta(s)\, U_\theta(t) = e^{i\langle t,\theta s\rangle}\,U_\theta(t)\, U_\theta(s)$ and $U_\theta(t)^* = U_\theta(-t)$.

\begin{remark}
In the case \(\theta=0\), \(\R_0^d\) is the von Neumann algebra
generated by the unitary group of translations on \(\R^d\), which is
\(*\)-isomorphic to \(L_\infty(\R^d)\).
\end{remark}

\textbf{Integration on $\R^d_\theta$.}\;
Now we introduce a map from $L_1(\mathbb{R}^d)$ to $L_\infty(\mathbb{R}_\theta^d)$ which is also denoted by $U_\theta$.
For $f \in L_1(\mathbb{R}^d)$, define $U_\theta(f) : L_2(\mathbb{R}^d) \to L_2(\mathbb{R}^d)$ by
\begin{equation}\label{eq2.7}
U_\theta(f)g(r) := \int_{\mathbb{R}^d} f(t)\,U_\theta(t)g(r)\,\mathrm{d}t, \qquad g \in L_2(\mathbb{R}^d).
\end{equation}
The \(L_2(\R^d)\)-valued integral is convergent
in the Bochner sense and one can show that \(U_\theta(f) \in L_\infty(\mathbb{R}_\theta^d)\) and 
\(
\|U_\theta(f)\|_{L_2\to L_2} \le\|f\|_{L_1(\R^d)};
\)
see \cite[Lemma~2.3]{MSX20}.

In what follows, \(\mcs(\R^d)\) denotes the Schwartz space on \(\R^d\). For any $f\in\mcs(\mathbb{R}^d)$, define the Fourier transform as
\[
\mcf(f)(\xi)
:=\widehat{f}(\xi) = \int_{\R^d} f(t)\, e^{-2\pi i\langle t,\xi\rangle}\,\mathrm{d}t, \qquad \xi\in \R^d.
\]
and the inverse Fourier transform
\[
\mcf^{-1}(f)(t)
:=\check{f}(t)= \int_{\R^d} f(\xi)\, e^{2\pi i\langle \xi,t\rangle}\,\mathrm{d}\xi,\qquad t\in \R^d.
\]
The Schwartz class on $\R_\theta^d$ is defined as the image of $\mcs(\mathbb{R}^d)$ under $U_\theta$:
\[
\mcs(\R_\theta^d) := \bigl\{\x \in L_\infty(\R_\theta^d) : \x = U_\theta(f) \text{ for some } f \in \mcs(\mathbb{R}^d)\bigr\}.
\]
Then $U_\theta : \mcs(\mathbb{R}^d) \to \mcs(\R_\theta^d)$ is a bijection, and $\mcs(\R_\theta^d)$ is equipped with the Fr\'{e}chet topology induced by $U_\theta$. The space of quantum tempered distributions $\mcs'(\R_\theta^d)$ is defined as the topological dual of $\mcs(\R_\theta^d)$, and $U_\theta$ extends to a bijection $\mcs'(\mathbb{R}^d) \to \mcs'(\R_\theta^d)$ via
\begin{equation}\label{eq:duality-Utheta}
(U_\theta(f),\, U_\theta(g)) := (f,\, \tilde{g}), \quad f\in\mcs'(\R^d),\; g \in \mcs(\mathbb{R}^d),
\end{equation}
where $\widetilde g(t):=g(-t)$.

For $\x = U_\theta(f) \in \mcs(\R_\theta^d)$ with $f \in \mcs(\mathbb{R}^d)$, define
\[
\tau_\theta(\x) := f(0).
\]
Then \(\tau_\theta\) extends to a normal
faithful semifinite trace on
\(\R_\theta^d\) (see \cite[Proposition~1.1]{GJP21}).  
The noncommutative $L_p$-space associated to $(\mathbb{R}_\theta^d,\tau_\theta)$ with norm $\|\cdot\|_{L_p(\mathbb{R}_\theta^d)}$ is denoted by
\(L_p(\mathbb{R}_\theta^d)\). 
The Schwartz class $\mcs(\mathbb{R}_\theta^d)$ is dense in
$L_p(\mathbb{R}_\theta^d)$ for $1\le p<\infty$ in the norm topology, and dense
in $L_\infty(\mathbb{R}_\theta^d)$ in the weak-$*$ topology, for which we refer
to \cite[Proposition~1.13]{GJP21} and \cite[Lemma~3.12]{MSX20}.  
Furthermore, $L_1(\R_\theta^d)+L_\infty(\R_\theta^d)$ embeds into $\mcs'(\R_\theta^d)$ via
\[
(\x,\phi) = \tau_\theta(\x\,\phi), \quad \x\in L_1(\R_\theta^d)+L_\infty(\R_\theta^d),\;\; \phi\in \mcs(\R_\theta^d).
\]

Define the \(\theta\)-convolution of \(f,g\in\mcs(\R^d)\) by
\begin{equation}\label{eq:theta-conv}
\begin{aligned}
(f*_\theta g)(s)
:=
\int_{\R^d}
e^{-\frac{\mathrm i}{2}\langle s,\theta t\rangle}
f(s-t)g(t)\,\mathrm dt 
\quad s\in\R^d.
\end{aligned}
\end{equation}
Set
\[
f^\sharp(s):=\overline{f(-s)},
\qquad s\in\R^d.
\]
The Weyl relation \eqref{eq2.3}, together with a change of variables,
gives
\[
U_\theta(f)^*= \int_{\mathbb R^d} \overline{f(-s)}\,U_\theta(s)\,\mathrm{d}s = U_\theta(f^\sharp),
\]
and
\begin{equation}
\begin{aligned}\label{eq:Utheta-product}
U_\theta(f)\,U_\theta(g)
&= \int_{\mathbb R^d}\Bigl(\int_{\mathbb R^d} e^{-\frac{i}{2}\langle s,\theta t\rangle}\,f(s-t)\,g(t)\,\mathrm{d}t\Bigr)U_\theta(s)\,\mathrm{d}s\\
&= U_\theta(f *_\theta g).
\end{aligned}
\end{equation}
Moreover,
\(
\mcs(\R^d)*_\theta\mcs(\R^d)
\subseteq
\mcs(\R^d),
\)
and the $\theta$-convolution \( *_\theta \) reduces to the usual convolution when
\(\theta=0\).

\textbf{Translations and differential calculus on $\R_\theta^d$.}\;
There are two equivalent ways to define the partial derivatives in quantum
Euclidean spaces, as discussed respectively in \cite{LMSZ2023,ME24} and \cite{LSZ20,MSX20}:
one through the translation group and the other through the commutator.
In this article, we will focus on the former one.  

For \(t\in\R^d\), let \(T_t\) be the unique \(^*\)-automorphism of
\(L_\infty(\R_\theta^d)\) determined by
\begin{equation}\label{eq:translation-automorphism}
T_t\bigl(U_\theta(s)\bigr)
=
e^{2\pi i\langle s,t\rangle}U_\theta(s),
\qquad t,s\in\R^d.
\end{equation}
Equivalently, let $V_t$ be the unitary operator on
$L_2(\R^d)$ defined by
\[
(V_tg)(r)
=
e^{2\pi i\langle t,r\rangle}g(r),
\qquad g\in L_2(\R^d),
\]
then the translation by $t$ on $\R_\theta^d$ may be defined as
\[
T_t\x
:=
V_t\x V_t^*,
\qquad
\x\in L_\infty(\R_\theta^d).
\]

For $f\in\mcs(\R^d)$, the definition of $U_\theta(f)$ gives
\begin{equation}\label{eq:translation-formula}
T_t\,U_\theta(f)
=
U_\theta\bigl(e^{2\pi i\langle t,\cdot\rangle}f\bigr),
\qquad
t\in\R^d.
\end{equation}
It follows that \(T_t\) preserves \(\tau_\theta\) and extends to a
positive isometry on \(L_p(\R_\theta^d)\) for every
\(1\leq p\leq\infty\). The group
\((T_t)_{t\in\R^d}\) is strongly continuous on
\(L_p(\R_\theta^d)\) whenever \(1\leq p<\infty\); see
\cite{MSX20}.
More generally, for $\x\in\mcs'(\R_\theta^d)$, define $T_t\x\in
\mcs'(\R_\theta^d)$ by
\begin{equation}\label{e2.8}
(T_t\,\x,\phi)
=
(\x,T_{-t}\,\phi),
\qquad
\phi\in\mcs(\R_\theta^d).
\end{equation}

For $1\leq p<\infty$ and $j\in\{1,\ldots,d\}$, we define 
$\partial_j$ as the infinitesimal generator of the \(C_0\)-group
\((T_{r\mathbf e_j})_{r\in\R}\) on \(L_p(\R_\theta^d)\), where
\(\mathbf e_j\) denotes the \(j\)-th standard basis vector of
\(\R^d\). Its domain is
\[
\operatorname{Dom}(\partial_j)
:=
\left\{
\x\in L_p(\R_\theta^d):
\lim_{r\to0}
\frac{T_{r\mathbf{e}_j}\x-\x}{r}
\ \text{exists in }L_p(\R_\theta^d)
\right\},
\]
and, for $\x\in\operatorname{Dom}(\partial_j)$,
\begin{align}\label{eq:translation-generator-def}
\partial_j\x
:=
\lim_{r\to0}
\frac{T_{r\mathbf{e}_j}\x-\x}{r}.
\end{align}
For a multi-index
$\alpha=(\alpha_1,\ldots,\alpha_d)\in\N_0^d$, we sFor a multi-index
\(\alpha=(\alpha_1,\ldots,\alpha_d)\in\N_0^d\), set
\[
\partial^\alpha
:=
\partial_1^{\alpha_1}\cdots\partial_d^{\alpha_d}.
\]
For \(f\in\mcs(\R^d)\), one has
\begin{equation}\label{eq:partial-schwartz}
\partial^\alpha U_\theta(f)
=
U_\theta\bigl((2\pi i t)^\alpha f(t)\bigr)
\in
\mcs(\R_\theta^d),
\end{equation}
where
\(
t^\alpha=t_1^{\alpha_1}\cdots t_d^{\alpha_d}.
\)

For \(\x\in\mcs'(\R_\theta^d)\), define its distributional
derivatives by
\[
(\partial^\alpha\x,\phi)
:=
(-1)^{|\alpha|}(\x,\partial^\alpha\phi),
\qquad
\phi\in\mcs(\mathbb R_\theta^d).
\]
Then, in \(\mcs'(\R_\theta^d)\),
\begin{equation}\label{eq:deriv-via-translation}
\partial_j\x
=
\left.
\frac{\mathrm d}{\mathrm dr}
T_{r\mathbf e_j}\x
\right|_{r=0};
\end{equation}
equivalently,
\[
\lim_{r\to0}
\left(
\frac{T_{r\mathbf e_j}\x-\x}{r},
\phi
\right)
=
(\partial_j\x,\phi),
\qquad
\phi\in\mcs(\R_\theta^d).
\]
If \(1\leq p<\infty\) and
\(\x\in\operatorname{Dom}(\partial_j)\), this distributional
derivative coincides with the \(L_p\)-derivative defined in
\eqref{eq:translation-generator-def}.

\textbf{Convolution on $\R_\theta^d$.}\;
\begin{definition}\label{def:convolution}
Let $1 \leq p \leq \infty$ and $\x \in L_p(\mathbb{R}^d_\theta)$. For $\psi \in L_1(\mathbb{R}^d)$,  define
\[
\psi * \x = \int_{\mathbb{R}^d} \psi(s)\, T_{-s}\, \x \,\mathrm{d}s
\]
as an $L_p(\mathbb{R}^d_\theta)$-valued Bochner integral when $p < \infty$, and as a weak$^*$ integral when $p = \infty$.
\end{definition}

Since \(T_s\) is an isometry on \(L_p(\R_\theta^d)\), the triangle
inequality for Bochner integrals gives
\begin{equation}\label{eq:conv-Lp-bound}
\|\psi*\x\|_{L_p(\R_\theta^d)}
\leq
\|\psi\|_{L_1(\R^d)}
\|\x\|_{L_p(\R_\theta^d)}.
\end{equation}
If \(\psi\in\mcs(\R^d)\) and
\(\x=U_\theta(f)\in\mcs(\R_\theta^d)\), then
\begin{equation}\label{eq:conv-schwartz}
\psi*\x
=
\int_{\R^d}\psi(s)T_{-s}\x\,\mathrm ds
=
U_\theta(\widehat\psi\,f)
\in\mcs(\R_\theta^d).
\end{equation}
For \(\psi\in\mcs(\R^d)\), convolution extends to
\(\mcs'(\R_\theta^d)\) by duality:
\begin{equation}\label{eq:conv-distribution}
(\psi*\x,\phi)
:=
(\x,\widetilde\psi*\phi),
\qquad
\x\in\mcs'(\R_\theta^d),\quad
\phi\in\mcs(\R_\theta^d),
\end{equation}
where
\(\widetilde\psi(s):=\psi(-s)\).
This extension commutes with distributional derivatives. Namely, for
\(\x\in\mcs'(\R_\theta^d)\), \(\psi\in\mcs(\R^d)\), and
\(\alpha\in\N_0^d\),
\begin{equation}\label{eq:conv-schw-commut-distri}
\partial^\alpha(\psi*\x)
=
(\partial^\alpha\psi)*\x
=
\psi*(\partial^\alpha\x)
\qquad
\text{in }\mcs'(\R_\theta^d).
\end{equation}

\subsection{Fourier transform and Fourier multipliers on $\R^d_\theta$}\label{Section 2.3}

By \eqref{eq:duality-Utheta}, \(U_\theta\) extends to a bijection from
\(\mcs'(\R^d)\) onto \(\mcs'(\R_\theta^d)\).  We define the Fourier transform
of \(\x\in\mcs'(\R_\theta^d)\) by
\[
\widehat{\x}
:=
U_\theta^{-1}(\x)
\in
\mcs'(\R^d).
\]
If \(\x\in\mcs(\R_\theta^d)\), then \(\widehat{\x}\in\mcs(\R^d)\)
and
\[
\widehat{\x}(s)
=
\tau_\theta\bigl(\x U_\theta(s)^*\bigr),
\qquad
s\in\R^d.
\]
By definition,
\begin{equation}\label{eq:dis-f-nf}
\widehat{U_\theta(f)}=f,
\qquad f\in\mcs'(\R^d).
\end{equation}

The Plancherel identities
\cite[Lemmas~2.4 and~2.7]{HLW23} show that \(U_\theta\) extends to a
unitary operator from \(L_2(\R^d)\) onto
\(L_2(\R_\theta^d)\), whose inverse is the Fourier transform.
Consequently,
\[
U_\theta(\widehat{\x})
=
\x,
\qquad
\x\in L_2(\R_\theta^d).
\]
For \(1\leq p\leq2\), the Hausdorff--Young inequality
\cite[Lemma~2.4]{HLW23} gives
\[
\|\widehat{\x}\|_{L_{p'}(\R^d)}
\leq
\|\x\|_{L_p(\R_\theta^d)},
\qquad
\x\in L_p(\R_\theta^d).
\]

\begin{definition}\label{def:fourier-multiplier}
Let $g \in C^\infty(\mathbb R^d)$ be such that \(g\) and all its derivatives has at most polynomial growth.
The Fourier multiplier with
symbol \(g\) is defined by
\begin{equation}\label{eq:fourier-multiplier}
g(D)\x
:=
U_\theta\bigl(g\,\widehat{\x}\bigr),
\qquad
\x\in\mcs'(\R_\theta^d).
\end{equation}
\end{definition}

For \(\x\in\mcs(\R_\theta^d)\), this definition takes the form
\[
g(D)\x
=
\int_{\R^d}
g(\xi)\widehat{\x}(\xi)U_\theta(\xi)\,\mathrm d\xi.
\]
If \(g\in\mcs(\R^d)\), then
\begin{equation}\label{eq:multiplier-convolution}
g(D)\x
=
\check g*\x,
\qquad
\x\in\mcs'(\R_\theta^d).
\end{equation}
If, more generally, \(\check g\in L_1(\R^d)\), then \(g(D)\) acts
boundedly on \(L_p(\R_\theta^d)\) for every \(1\leq p\leq\infty\),
and \eqref{eq:conv-Lp-bound} gives
\[
\|g(D)\x\|_{L_p(\R_\theta^d)}
\leq
\|\check g\|_{L_1(\R^d)}
\|\x\|_{L_p(\R_\theta^d)}.
\]
Further results on Fourier multipliers on \(\R_\theta^d\) may be
found in \cite{ME24,RST2026}.

\subsection{Difference operators and moduli of smoothness on
\(\R_\theta^d\)}
\label{section2.4}

For \(h\in\R^d\), set
\[
e_h(\xi):=e^{2\pi i\langle h,\xi\rangle},
\qquad
d_h(\xi):=e_h(\xi)-1.
\]
By the distributional extension of
\eqref{eq:translation-formula} and \eqref{eq:dis-f-nf},
\[
\widehat{T_h\x}=e_h\,\widehat{\x},
\qquad
\x\in\mcs'(\R_\theta^d).
\]
Consequently,
\(T_h=e_h(D)\) on \(\mcs'(\R_\theta^d)\).

\begin{definition}\label{def:difference}
For \(h\in\R^d\) and \(m\in\N\), define
\[
\Delta_h^m
:=
(T_h-I)^m
=
(d_h^m)(D)
\]
on \(\mcs'(\R_\theta^d)\). Equivalently,
\begin{equation}\label{eq:binomial-diff}
\Delta_h^m\x
=
\sum_{j=0}^m
(-1)^{m-j}\binom{m}{j}T_{jh}\x.
\end{equation}
For \(1\leq p\leq\infty\), \(\x\in L_p(\R_\theta^d)\), and
\(\varepsilon>0\), the modulus of smoothness of order \(m\) is defined by
\[
\omega_p^m(\x,\varepsilon)
:=
\sup_{0<|h|\leq\varepsilon}
\|\Delta_h^m\x\|_{L_p(\R_\theta^d)}.
\]
\end{definition}

\begin{remark}\label{rem:modulus-properties}
Let \(1\leq p\leq\infty\), \(m\in\N\), and
\(\x\in L_p(\R_\theta^d)\). The following properties will be used
throughout:
\begin{enumerate}[label=(\roman*)]
\item
For every \(\varepsilon>0\),
\begin{align}\label{eq:diff-bound}
\omega_p^m(\x,\varepsilon)
\leq
2^m\|\x\|_{L_p(\R_\theta^d)}.
\end{align}

\item
The function
\(\varepsilon\mapsto\omega_p^m(\x,\varepsilon)\) is nondecreasing
and, when \(p<\infty\), continuous on \((0,\infty)\).

\item
For every \(\varepsilon>0\), the map
\(
\x\longmapsto\omega_p^m(\x,\varepsilon)
\)
is a seminorm on \(L_p(\R_\theta^d)\).

\item
If \(k\in\N\) and \(k<m\), then
\begin{equation}\label{eq:modulus-monotone-order}
\omega_p^m(\x,\varepsilon)
\leq
2^{m-k}\omega_p^k(\x,\varepsilon),
\qquad
\varepsilon>0.
\end{equation}
\end{enumerate}
\end{remark}

For later use, we record the following scaling properties of the
moduli of smoothness.
\begin{lemma}\label{lem:moduli-scaling}
Let \(m\in\N\), \(1\leq p\leq\infty\), and
\(\x\in L_p(\R_\theta^d)\). Then
\begin{equation}\label{eq:moduli-integer-scaling}
\omega_p^m(\x,n\varepsilon)
\leq
n^m\omega_p^m(\x,\varepsilon),
\qquad
n\in\N,\quad \varepsilon>0,
\end{equation}
and
\begin{equation}\label{eq:moduli-real-scaling}
\omega_p^m(\x,\lambda\varepsilon)
\leq
(\lambda+1)^m\omega_p^m(\x,\varepsilon),
\qquad
\lambda>0.
\end{equation}
\end{lemma}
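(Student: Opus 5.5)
The plan is to transfer the classical proof directly, using only that the translations $T_t$ form a group of isometries on $L_p(\R_\theta^d)$ (a weak$^*$-continuous group of isometries when $p=\infty$) and that $\Delta_h^m=(T_h-I)^m$ is a polynomial in $T_h$.

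For \eqref{eq:moduli-integer-scaling}, fix $0<|h|\le n\varepsilon$ and set $g=h/n$, so that $|g|\le\varepsilon$. Since $T_h=T_g^n$, the operator identity
\[
T_h-I=T_g^n-I=(T_g-I)\sum_{j=0}^{n-1}T_{jg}
\]
holds. The factors commute, so
\[
\Delta_h^m=\Bigl(\sum_{j=0}^{n-1}T_{jg}\Bigr)^m\Delta_g^m
=\sum_{j_1,\dots,j_m=0}^{n-1}T_{(j_1+\cdots+j_m)g}\,\Delta_g^m .
\]
This identity holds on $\mcs'(\R_\theta^d)$ and hence on $L_p(\R_\theta^d)$. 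Every $T_{(j_1+\cdots+j_m)g}$ is an isometry on $L_p$, and there are $n^m$ terms, so the triangle inequality gives $\|\Delta_h^m\x\|_p\le n^m\|\Delta_g^m\x\|_p\le n^m\omega_p^m(\x,\varepsilon)$. Taking the supremum over $h$ yields \eqref{eq:moduli-integer-scaling}. For $p=\infty$ the same argument works, because each $T_t$ is a $*$-automorphism and therefore isometric on $L_\infty(\R_\theta^d)$.

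For \eqref{eq:moduli-real-scaling}, let $n=\lceil\lambda\rceil\in\N$, so that $\lambda\le n<\lambda+1$. By the monotonicity in Remark~\ref{rem:modulus-properties}(ii) and the integer case,
\[
\omega_p^m(\x,\lambda\varepsilon)\le\omega_p^m(\x,n\varepsilon)\le n^m\omega_p^m(\x,\varepsilon)\le(\lambda+1)^m\omega_p^m(\x,\varepsilon).
\]
This covers every $\lambda>0$. For $\lambda<1$ one has $n=1$, and the bound is immediate from monotonicity.

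No step is a genuine obstacle. The only point needing care is that the telescoping operator identity is legitimate in the noncommutative setting. This is handled by working on the Fourier side, where $T_h=e_h(D)$, $\Delta_h^m=(d_h^m)(D)$, and $d_{ng}=d_g\sum_{j<n}e_{jg}$ holds pointwise as functions. Alternatively, one can note that everything happens inside the commutative algebra generated by the single isometry $T_g$.
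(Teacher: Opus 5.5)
Your proposal is correct and follows essentially the same route as the paper's proof. Both use the Fourier-side factorization $d_{nh}=d_h\sum_{j<n}e_{jh}$ to get $\Delta_{nh}^m=(\sum_{j<n}T_{jh})^m\Delta_h^m$ and the factor $n^m$ from the isometry of translations, then take $n=\lceil\lambda\rceil$ and use monotonicity for the real case. Your substitution $g=h/n$ only makes explicit the bijection between $\{0<|h|\le\varepsilon\}$ and $\{0<|h'|\le n\varepsilon\}$, which the paper uses implicitly when it takes the supremum.
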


\begin{proof}
For \(n\in\mathbb N\), the identity
\[
d_{nh}
=
e_{nh}-1
=
e_h^n-1
=
\left(\sum_{j=0}^{n-1}e_{jh}\right)d_h
\]
implies that
\(
d_{nh}^m
=
\left(\sum_{j=0}^{n-1}e_{jh}\right)^m d_h^m.
\)
Since \(e_{jh}(D)=T_{jh}\) and \(d_h^m(D)=\Delta_h^m\), it follows that
\[
\Delta_{nh}^m
=
\left(\sum_{j=0}^{n-1}T_{jh}\right)^m\Delta_h^m.
\]
The \(L_p\)-isometry of the translations therefore gives
\[
\|\Delta_{nh}^m\x\|_{L_p(\R_\theta^d)}
\leq
n^m\|\Delta_h^m\x\|_{L_p(\R_\theta^d)}.
\]
Taking the supremum over \(0<|h|\leq\varepsilon\) proves
\eqref{eq:moduli-integer-scaling}.
For \(\lambda>0\), let \(n=\lceil\lambda\rceil\). By monotonicity and
\eqref{eq:moduli-integer-scaling},
\[
\omega_p^m(\x,\lambda\varepsilon)
\leq
\omega_p^m(\x,n\varepsilon)
\leq
n^m\omega_p^m(\x,\varepsilon)
\leq
(\lambda+1)^m\omega_p^m(\x,\varepsilon).
\]
\end{proof}

\section{Characterizations of Sobolev and Besov spaces on $\R_\theta^d$ via differences}\label{section3}

In this section, we develop the Sobolev and Besov space framework
needed for the proofs of the main results. We first introduce the
inhomogeneous Sobolev spaces, construct a Banach realization of the
homogeneous Sobolev space compatible with \(L_p(\R_\theta^d)\), and
establish a characterization of the homogeneous Sobolev seminorm by
moduli of smoothness. We then recall the Littlewood--Paley definition
of quantum Besov spaces and its equivalent characterization in terms
of differences.

\subsection{Sobolev spaces on $\R_\theta^d$ and their characterization via differences}\label{section3.1}

We begin by recalling the integer-order Sobolev spaces on $\R_\theta^d$.

\begin{definition}\label{def:sobolev}
Let $1\leq p \leq \infty$, and $m\in \N$.
The Sobolev space $W_p^m(\R_\theta^d)$ consists of all
$\x\in\mcs'(\R_\theta^d)$ such that $\partial^\beta \x \in L_p(\mathbb{R}_\theta^d)$ for every $\beta \in \mathbb{N}_0^d$ with $|\beta| \leq m$, equipped with the norm
\begin{equation}\label{eq:inhomogeneous-Sobolev-norm}
\|\x\|_{W_p^m(\R_\theta^d)} = \sum_{|\beta| \leq m} \|\partial^\beta \x\|_{L_p(\R_\theta^d)}.
\end{equation}
We also write
\begin{equation}\label{eq:homogeneous-Sobolev-seminorm}
|\x|_{\dot W_p^m(\R_\theta^d)}
:=
\sum_{|\beta|=m}
\|\partial^\beta \x\|_{L_p(\R_\theta^d)}
\end{equation}
for the corresponding homogeneous seminorm.
\end{definition}

By \cite[Propositions~3.3 and~3.14]{MSX20},
\(W_p^m(\R_\theta^d)\), equipped with the norm
\eqref{eq:inhomogeneous-Sobolev-norm}, is a Banach space for
\(1\leq p\leq\infty\). Moreover,
\(\mcs(\R_\theta^d)\) is dense in \(W_p^m(\R_\theta^d)\) whenever
\(1\leq p<\infty\).

For the real-interpolation argument in
Section~\ref{section5}, we need to realize the homogeneous Sobolev seminorm as the norm of a
Banach space compatible with \(L_p(\R_\theta^d)\).  We first observe that
\eqref{eq:homogeneous-Sobolev-seminorm} is a norm on
\(\mcs(\R_\theta^d)\). Indeed, suppose that
\(\x=U_\theta(f)\in\mcs(\R_\theta^d)\) and
\(\partial^\beta \x=0\) for every \(|\beta|=m\). 
Since
\[
\partial^\beta U_\theta(f)
=
U_\theta\bigl((2\pi i\xi)^\beta f\bigr)
\]
and $U_\theta$ is injective, we have
\[
\xi_j^m f(\xi)=0,
\qquad j=1,\ldots,d.
\]
Hence \(f(\xi)=0\) whenever \(\xi\ne0\). Continuity then gives \(f(0)=0\), so
\(f=0\) and therefore \(\x=0\). Thus
\(|\cdot|_{\dot W_p^m(\R_\theta^d)}\) is a norm on
\(\mcs(\R_\theta^d)\).

We now define the corresponding homogeneous Sobolev space by
completion.

\begin{lemma}\label{lem:compatible-homogeneous-couple}
Let $1\leq p<\infty$ and $k\in\mathbb N$. Let
$\dot W_{p,0}^k(\R_\theta^d)$ be the completion of
$\mcs(\R_\theta^d)$ with respect to
$|\cdot|_{\dot W_p^k}$, equipped with the induced norm
$\|\cdot\|_{\dot W_{p,0}^k}$. Then
$\dot W_{p,0}^k(\R_\theta^d)$ is a Banach space, and
\[
\bigl(
L_p(\R_\theta^d),
\dot W_{p,0}^k(\R_\theta^d)
\bigr)
\]
is a compatible Banach couple.
\end{lemma}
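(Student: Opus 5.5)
Completeness of $\dot W_{p,0}^k(\R_\theta^d)$ is automatic, since it is defined as a completion of a normed space; the substance of the lemma is compatibility. The plan is to embed both spaces continuously and injectively into the Hausdorff topological vector space
\[
\mathcal Y_k:=\prod_{\beta\in\mathfrak A_k}\mcs'(\R_\theta^d),
\]
equipped with the product of the weak-$^*$ topologies. A Banach couple only needs a common Hausdorff ambient space, so it does not matter that $\mathcal Y_k$ is not $\mcs'(\R_\theta^d)$ itself.

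\textbf{Step 1: the homogeneous space.} The map $\mathbf D^k\x=(\partial^\beta\x)_{\beta\in\mathfrak A_k}$ is isometric from $(\mcs(\R_\theta^d),|\cdot|_{\dot W_p^k})$ into $E_{p,k}=\ell_1(\mathfrak A_k;L_p(\R_\theta^d))$. Since $E_{p,k}$ is complete, $\mathbf D^k$ extends to an isometric isomorphism of $\dot W_{p,0}^k(\R_\theta^d)$ onto $\overline{\mathbf D^k(\mcs(\R_\theta^d))}\subseteq E_{p,k}$. The inclusion $L_p(\R_\theta^d)\hookrightarrow\mcs'(\R_\theta^d)$, $\x\mapsto\tau_\theta(\x\,\cdot)$, is continuous and injective by Hölder's inequality and the faithfulness/density of the pairing. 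Composing gives a continuous injection $\jmath_1:\dot W_{p,0}^k\to\mathcal Y_k$.

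\textbf{Step 2: the $L_p$ side.} Define $\jmath_0(\x):=(\partial^\beta\x)_{\beta\in\mathfrak A_k}$, with distributional derivatives. Continuity follows because
\[
(\partial^\beta\x,\phi)=(-1)^k\tau_\theta(\x\,\partial^\beta\phi)
\]
depends continuously on $\x\in L_p(\R_\theta^d)$ for each fixed $\phi$. For consistency, note that on $\mcs(\R_\theta^d)$ the maps $\jmath_0$ and $\jmath_1$ agree, so both spaces sit inside $\mathcal Y_k$ compatibly with the natural identification of Schwartz elements.

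\textbf{Step 3: injectivity of $\jmath_0$ (the main obstacle).} Suppose $\x\in L_p(\R_\theta^d)$ and $\partial^\beta\x=0$ for all $|\beta|=k$. Fix $j$ and $\phi\in\mcs(\R_\theta^d)$, and set
\[
g(t):=\langle T_{t\mathbf e_j}\x,\phi\rangle=\tau_\theta\bigl(\x\,T_{-t\mathbf e_j}\phi\bigr).
\]
Since $T_{-t\mathbf e_j}\phi=U_\theta(e^{-2\pi i t\xi_j}\widehat\phi)$ depends smoothly on $t$ in the Schwartz topology, $g$ is $C^\infty$ and
\[
g^{(k)}(t)=\pm\bigl(\partial_j^k\x,\,T_{-t\mathbf e_j}\phi\bigr)=0.
\]
Hence $g$ is a polynomial of degree at most $k-1$. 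It is also bounded, since $|g(t)|\le\|\x\|_p\|\phi\|_{p'}$, so $g$ is constant. Thus $T_h\x=\x$ for all $h\in\R^d$, which means $e_h\widehat\x=\widehat\x$ in $\mcs'(\R^d)$ for every $h$. This forces $\operatorname{supp}\widehat\x\subseteq\{0\}$. In fact $\widehat\x=c\delta_0$, because $(e_h-1)\widehat\x=0$ for all $h$ kills every derivative term of a distribution supported at $0$. Therefore $\x=c\mathbf 1$ as an element of $\mcs'(\R_\theta^d)$, and since $\x$ lies in $L_1+L_\infty$, also $\x=c\mathbf 1$ as an operator.

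Finally, $\|c\mathbf1\|_p^p=|c|^p\tau_\theta(\mathbf 1)=\infty$ unless $c=0$. So $\x=0$, and $\jmath_0$ is injective.

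The delicate points are the smoothness of $g$ and the identification of translation-invariant distributions with scalars. I would check the first via the Schwartz-topology continuity of $t\mapsto T_t\phi$. For the second I would argue on the Fourier side as above, using that $U_\theta$ is a bijection $\mcs'(\R^d)\to\mcs'(\R_\theta^d)$ with $U_\theta(\delta_0)=\mathbf 1$.
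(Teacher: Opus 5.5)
Your proposal is correct and follows the paper's proof essentially step by step. You use the same ambient space $\mathcal Y_k=\prod_{\beta\in\mathfrak A_k}\mcs'(\R_\theta^d)$, the same isometric realization of $\dot W_{p,0}^k(\R_\theta^d)$ inside $E_{p,k}$, and the same bounded-polynomial argument giving $T_h\x=\x$, followed by $\widehat{\x}=c\delta_0$ and $\tau_\theta(\mathbf 1)=\infty$. The only cosmetic difference is that the paper gets $\widehat{\x}=c_0\delta_0$ by differentiating $e_h\widehat{\x}=\widehat{\x}$ in $h$ to obtain $\xi_j\widehat{\x}=0$, whereas you work with $(e_h-1)\widehat{\x}=0$ directly; this works equally well.
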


\begin{proof}
Set
\[
\mathfrak A_k
:=
\bigl\{
\beta\in\mathbb N_0^d:|\beta|=k
\bigr\},
\qquad
E_{p,k}
:=
\ell_1\bigl(
\mathfrak A_k;L_p(\R_\theta^d)
\bigr),
\]
with norm
\[
\|(\x_\beta)_{\beta\in\mathfrak A_k}\|_{E_{p,k}}
:=
\sum_{\beta\in\mathfrak A_k}
\|\x_\beta\|_{L_p(\R_\theta^d)}.
\]
Then \(E_{p,k}\) is a Banach space. Define 
\[
\mathbf D^k:
\mcs(\mathbb R_\theta^d)
\longrightarrow E_{p,k},
\qquad
\mathbf D^k \x
=
(\partial^\beta \x)_{\beta\in\mathfrak A_k}.
\]
By definition,
\[
\|\mathbf D^k \x\|_{E_{p,k}}
=
|\x|_{\dot W_p^k},
\qquad
\x\in\mcs(\R_\theta^d).
\]
Thus \(\mathbf D^k\) is an isometry when
\(\mcs(\R_\theta^d)\) is equipped with
\(|\cdot|_{\dot W_p^k(\R_\theta^d)}\). Consequently, it extends
uniquely to an isometric isomorphism from
\(\dot W_{p,0}^k(\R_\theta^d)\) onto
\[
\mathcal R_{p,k}
:=
\overline{
\mathbf D^k\bigl(\mcs(\R_\theta^d)\bigr)
}^{\,E_{p,k}}.
\]
Since \(\mathcal R_{p,k}\) is a closed subspace of \(E_{p,k}\),
\(\dot W_{p,0}^k(\R_\theta^d)\) is a Banach space.  In what follows, we identify
$\dot W_{p,0}^k(\R_\theta^d)$ with $\mathcal R_{p,k}$.
Under this identification, every
$z\in\dot W_{p,0}^k(\R_\theta^d)$ has the form
\[
\z=(\z_\beta)_{\beta\in\mathfrak A_k},
\qquad
\z_\beta\in L_p(\R_\theta^d),
\]
and
\[
\|\z\|_{\dot W_{p,0}^k(\R_\theta^d)}
=
\sum_{\beta\in\mathfrak A_k}
\|\z_\beta\|_{L_p(\R_\theta^d)}.
\]

To establish compatibility, we construct continuous injective maps
from both spaces into a common Hausdorff topological vector space.  
Let
\[
\mathcal Y_k
=
\prod_{\beta\in\mathfrak A_k}
\mcs'(\R_\theta^d)
\]
equipped with the product topology induced by the weak-$^*$ topology
\(
\sigma\bigl(\mcs'(\R_\theta^d),\mcs(\R_\theta^d)\bigr)
\) on each factor.
Then \(\mathcal Y_k\) is a Hausdorff locally convex space.

Define
\[
\jmath_0:
L_p(\R_\theta^d)
\longrightarrow
\mathcal Y_k,
\qquad
\jmath_0(\x)
:=
\bigl(\partial^\beta\x\bigr)_{\beta\in\mathfrak A_k}.
\]
For every \(\beta\in\mathfrak A_k\) and
\(\phi\in\mcs(\R_\theta^d)\), H\"{o}lder's inequality gives
\[
\bigl|
\langle\partial^\beta\x,\phi\rangle
\bigr|
=
\bigl|
\langle\x,(-1)^k\partial^\beta\phi\rangle
\bigr|
\leq
\|\x\|_{L_p(\R_\theta^d)}
\|\partial^\beta\phi\|_{L_{p'}(\R_\theta^d)}.
\]
Hence each coordinate map
\(\x\mapsto\partial^\beta \x\) is continuous from \(L_p(\R_\theta^d)\) into
\(\mcs'(\R_\theta^d)\) endowed with its weak-\(^*\) topology.
Therefore, \(\jmath_0\) is continuous.

We now prove that \(\jmath_0\) is injective. Let
\(\x\in L_p(\R_\theta^d)\) satisfy
\(
\partial^\beta \x=0\) for every \(\beta\in\mathfrak A_k\).
In particular,
\[
\partial_j^k \x=0,
\qquad j=1,\ldots,d.
\]
Fix \(j\in\{1,\ldots,d\}\) and
\(\phi\in\mcs(\mathbb R_\theta^d)\), and set
\[
P_{j,\phi}(t)
:=
\langle T_{t\mathbf e_j}\x,\phi\rangle,
\qquad t\in\mathbb R.
\]
The map \(t\mapsto T_{-t\mathbf e_j}\phi\) is smooth with values in
\(\mcs(\R_\theta^d)\), and hence \(P_{j,\phi}\in C^\infty(\R)\).
Moreover,
\[
P_{j,\phi}^{(r)}(t)
=
\bigl\langle
T_{t\mathbf e_j}\partial_j^r\x,\phi
\bigr\rangle,
\qquad
r\in\mathbb N_0.
\]
Thus \(P_{j,\phi}^{(k)}=0\), and so \(P_{j,\phi}\) is a polynomial of
degree at most \(k-1\). On the other hand, H\"{o}lder's inequality and
the \(L_p\)-isometry of the translations give
\[
|P_{j,\phi}(t)|
\leq
\|\x\|_{L_p(\R_\theta^d)}
\|\phi\|_{L_{p'}(\R_\theta^d)},
\qquad
t\in\R.
\]
Hence \(P_{j,\phi}\) is constant. Since \(\phi\) is arbitrary,
\[
T_{t\mathbf e_j}\x=\x,
\qquad
t\in\R,\quad j=1,\ldots,d.
\]
The group property of the translations therefore yields
\[
T_h\x=T_{h_1\mathbf e_1}\cdots T_{h_d\mathbf e_d}\x=\x,
\qquad
h\in\R^d.
\]
Taking Fourier transforms, we obtain
\[
e^{2\pi i\langle h,\cdot\rangle}\widehat \x
=
\widehat \x
\quad\text{in }\mcs'(\mathbb R^d),
\qquad h\in\mathbb R^d.
\]
Differentiating this identity with respect to \(h_j\) at \(h=0\)
gives
\[
\xi_j\widehat{\x}=0,
\qquad
j=1,\ldots,d.
\]
Consequently,
\(\operatorname{supp}\widehat{\x}\subset\{0\}\). Hence
\[
\widehat \x
=
\sum_{|\gamma|\le N}
c_\gamma\partial^\gamma\delta_0
\]
for some \(N\in\mathbb N_0\). Since
\[
\xi_j\partial^\gamma\delta_0
=
-\gamma_j\partial^{\gamma-\mathbf e_j}\delta_0,
\]
the identities \(\xi_j\widehat{\x}=0\) imply that
\(c_\gamma=0\) for every \(\gamma\neq0\). Hence
\[
\widehat{\x}=c_0\delta_0.
\]
 Finally, \(U_\theta(\delta_0)=\mathbf1\) in
\(\mcs'(\R_\theta^d)\). Indeed, for \(g\in\mcs(\R^d)\),
\[
\langle U_\theta(\delta_0),U_\theta(g)\rangle
=
\langle\delta_0,\widetilde g\rangle
=
g(0)
=
\tau_\theta(U_\theta(g))
=
\langle\mathbf1,U_\theta(g)\rangle.
\]
It follows that \(\x=c_0\mathbf1\). Since
\(\tau_\theta(\mathbf1)=\infty\), one has
\(\mathbf1\notin L_p(\R_\theta^d)\) for \(1\leq p<\infty\).
Therefore \(c_0=0\), and hence \(\x=0\). Thus \(\jmath_0\) is
injective.

Using the identification
\(\dot W_{p,0}^k(\R_\theta^d)=\mathcal R_{p,k}\), define
\[
\jmath_1:
\dot W_{p,0}^k(\R_\theta^d)
\longrightarrow
\mathcal Y_k,
\qquad
\jmath_1(z)
:=
\bigl(\iota_p(z_\beta)\bigr)_{\beta\in\mathfrak A_k},
\]
where
$z=(z_\beta)_{\beta\in\mathfrak A_k}\in\mathcal R_{p,k}$
 and
\(\iota_p:L_p(\mathbb R_\theta^d)\hookrightarrow
\mcs'(\mathbb R_\theta^d)\)
denotes the canonical embedding. For every
\(\beta\in\mathfrak A_k\) and
\(\phi\in\mcs(\mathbb R_\theta^d)\),
\[
\bigl|
\langle\iota_p(\z_\beta),\phi\rangle
\bigr|
\leq
\|\z_\beta\|_{L_p(\R_\theta^d)}
\|\phi\|_{L_{p'}(\R_\theta^d)}
\leq
\|\z\|_{\dot W_{p,0}^k(\R_\theta^d)}
\|\phi\|_{L_{p'}(\R_\theta^d)}.
\]
Thus each coordinate map of \(\jmath_1\) is continuous, and hence
\(\jmath_1\) is continuous with respect to the product topology of
\(\mathcal Y_k\). Moreover, if \(\jmath_1(\z)=0\), then the
injectivity of \(\iota_p\) gives \(\z_\beta=0\) for every
\(\beta\in\mathfrak A_k\), so \(\z=0\). Therefore,
\(\jmath_1\) is injective.

For \(\x\in\mcs(\R_\theta^d)\), regarded as an element of
\(\dot W_{p,0}^k(\R_\theta^d)\), one has
\[
\jmath_1(\x)
=
\bigl(\partial^\beta\x\bigr)_{\beta\in\mathfrak A_k}
=
\jmath_0(\x).
\]
Consequently, \(\jmath_0\) and \(\jmath_1\) are continuous
injections of \(L_p(\R_\theta^d)\) and
\(\dot W_{p,0}^k(\R_\theta^d)\), respectively, into the Hausdorff
topological vector space \(\mathcal Y_k\). Hence
\[
\bigl(
L_p(\R_\theta^d),
\dot W_{p,0}^k(\R_\theta^d)
\bigr)
\]
is a compatible Banach couple.
\end{proof}

We next establish a Gagliardo--Nirenberg inequality, from which we
derive an equivalent norm on \(W_p^k(\R_\theta^d)\) involving only
the \(L_p\)-norm and the derivatives of order \(k\).

\begin{lemma}\label{lem:Gagliardo-Nirenberg}
Let \(1\leq p\leq\infty\), \(k\in\N\), and \(1\leq m<k\).
Then, for every \(\x\in W_p^k(\R_\theta^d)\),
\begin{equation}\label{eq:GN-interpolation}
|\x|_{\dot W_p^m(\R_\theta^d)}
\lesssim_{d,m,k}
\|\x\|_{L_p(\R_\theta^d)}^{1-m/k}
|\x|_{\dot W_p^k(\R_\theta^d)}^{m/k}.
\end{equation}
\end{lemma}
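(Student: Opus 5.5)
The plan is to transfer the classical one-dimensional Landau--Kolmogorov argument along each coordinate direction, using only that the translations $T_{t\mathbf e_j}$ form a strongly continuous group of $L_p$-isometries with generator $\partial_j$. Then we combine the directions through mixed derivatives.

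\emph{Step 1: pure derivatives in one direction.} Fix $j$ and $\x\in\mcs(\R_\theta^d)$; the general case follows by density of $\mcs(\R_\theta^d)$ in $W_p^k(\R_\theta^d)$ for $p<\infty$. For $p=\infty$ one instead works directly with the group, or by weak-$*$ approximation through $\psi*\x$ with a mollifier $\psi$. For $h>0$, Taylor's formula for the group reads
\[
T_{rh\mathbf e_j}\x=\sum_{i=0}^{k-1}\frac{(rh)^i}{i!}\partial_j^i\x+\int_0^{rh}\frac{(rh-s)^{k-1}}{(k-1)!}T_{s\mathbf e_j}\partial_j^k\x\,\mathrm ds,\qquad r=1,\dots,k-1.
\]
Taking a fixed linear combination $\sum_r c_{m,r}(T_{rh\mathbf e_j}-I)$, with coefficients chosen via the invertible Vandermonde matrix so that it isolates the coefficient $h^m/m!$, yields
\[
\|\partial_j^m\x\|_p\lesssim_{k} h^{-m}\|\x\|_p+h^{k-m}\|\partial_j^k\x\|_p .
\]
If $\partial_j^k\x=0$ we let $h\to\infty$. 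Otherwise we optimize with $h=(\|\x\|_p/\|\partial_j^k\x\|_p)^{1/k}$, which gives $\|\partial_j^m\x\|_p\lesssim\|\x\|_p^{1-m/k}\|\partial_j^k\x\|_p^{m/k}$.

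\emph{Step 2: mixed derivatives.} For $|\beta|=m$ we need every $\|\partial^\beta\x\|_p$, not only the pure ones. I will use the Fourier-multiplier route. Choose $\chi\in\mcs(\R^d)$ with $\widehat{\ }$-support in an annulus, giving a Littlewood--Paley-free homogeneous argument at scale $\lambda>0$. Write
\[
(2\pi i\xi)^\beta=\sum_{j}(2\pi i\xi_j)^k\, g_{j}(\xi)\,\ldots
\]
This is awkward. The simpler alternative, which I commit to, is the directional trick: apply Step 1 with $\mathbf e_j$ replaced by an arbitrary unit vector $v$, using $T_{tv}$ with generator $\partial_v=\sum v_i\partial_i$. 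Since $\partial_v^k=\sum_{|\gamma|=k}\binom{k}{\gamma}v^\gamma\partial^\gamma$, we get $\|\partial_v^k\x\|_p\lesssim|\x|_{\dot W^k_p}$. Hence
\[
\|\partial_v^m\x\|_p\lesssim\|\x\|_p^{1-m/k}|\x|_{\dot W^k_p}^{m/k}
\]
uniformly in $v$. Next, polarization: the span of $\{\xi\mapsto\langle v,\xi\rangle^m\}$ over finitely many fixed directions $v_1,\dots,v_M$ equals all homogeneous polynomials of degree $m$. So each $\partial^\beta=\sum_l a_{\beta,l}\partial_{v_l}^m$ with constants depending only on $d,m$. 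Summing gives the claim.

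The main obstacle is justifying Step 1's Taylor formula and the identity $\partial_v^k=\sum\binom{k}{\gamma}v^\gamma\partial^\gamma$ in $L_p$ (for $p=\infty$ as weak-$*$ limits). On $\mcs(\R_\theta^d)$ both follow from \eqref{eq:translation-formula} and \eqref{eq:partial-schwartz} at the symbol level, since $T_{tv}U_\theta(f)=U_\theta(e^{2\pi i t\langle v,\cdot\rangle}f)$ is a smooth curve in $\mcs(\R_\theta^d)$. For $p=\infty$ I expect to handle density via $\psi_\epsilon*\x$, which lies in $W^k_\infty$ with derivatives commuting by \eqref{eq:conv-schw-commut-distri}. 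Then we pass to the limit using weak-$*$ lower semicontinuity of the norm.
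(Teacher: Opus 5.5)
Your proof is correct, but it takes a different route from the paper's.

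\textbf{How the paper argues.} The paper stays in the coordinate directions and uses only the second-order Landau--Kolmogorov inequality $\|\partial_j y\|_p\le 2\|y\|_p^{1/2}\|\partial_j^2 y\|_p^{1/2}$. It applies this to $y=\partial^\beta\x$ for every $|\beta|=r-1$, sums, and uses Cauchy--Schwarz to get the log-convexity $A_r\lesssim_d A_{r-1}^{1/2}A_{r+1}^{1/2}$ of the full seminorms $A_r=|\x|_{\dot W_p^r(\R_\theta^d)}$. An induction then yields $A_m\lesssim A_0^{1-m/k}A_k^{m/k}$. Mixed derivatives are produced automatically, one coordinate at a time. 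The argument runs directly on $W_p^k(\R_\theta^d)$ (in the weak-$*$ sense when $p=\infty$), with no density step.

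\textbf{How your route compares.} You reach order $m$ in one step from the $k$-th order Taylor formula plus a Vandermonde inversion. This removes the induction and gives the additive bound $\|\partial_v^m\x\|_p\lesssim h^{-m}\|\x\|_p+h^{k-m}\|\partial_v^k\x\|_p$ directly, a slightly stronger purely directional statement. The cost is more structure: translation groups $T_{tv}$ in arbitrary directions, the expansion $\partial_v^k=\sum_{|\gamma|=k}\binom{k}{\gamma}v^\gamma\partial^\gamma$, and the fact that $m$-th powers of linear forms span the homogeneous polynomials of degree $m$, which you need to recover each $\partial^\beta$.

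\textbf{One point to repair at $p=\infty$.} The mollified element $\psi_\varepsilon*\x$ is not in $\mcs(\R_\theta^d)$. For example, $\psi_\varepsilon*\mathbf 1=\mathbf 1$ when $\int\psi=1$. So your symbol-level justification on $\mcs(\R_\theta^d)$ does not cover it.

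The cleanest fix is to skip mollification and run Step~1 on the scalar functions $t\mapsto(T_{tv}\x,\phi)=(\x,T_{-tv}\phi)$, $\phi\in\mcs(\R_\theta^d)$. By the distributional definitions these are $C^k$, with $r$-th derivative $\bigl(T_{tv}\sum_{|\gamma|=r}\binom{r}{\gamma}v^\gamma\partial^\gamma\x,\phi\bigr)$. The scalar Taylor formula and your Vandermonde combination then give the estimate, after taking the supremum over $\phi\in\mcs(\R_\theta^d)$ with $\|\phi\|_{L_1(\R_\theta^d)}\le1$; this supremum recovers the $L_\infty$ norm by density of $\mcs(\R_\theta^d)$ in $L_1(\R_\theta^d)$. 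This is essentially the weak-$*$ reading the paper uses.

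Alternatively, keep the mollifier and observe that $t\mapsto T_{tv}(\psi_\varepsilon*\x)=\int\psi_\varepsilon(s+tv)\,T_{-s}\x\,\mathrm ds$ is norm-smooth in $L_\infty$. This follows by differentiating $\psi_\varepsilon(\cdot+tv)$ in $L_1(\R^d)$.
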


\begin{proof}
Let \(y\in W_p^2(\R_\theta^d)\), \(t>0\), and
\(j\in\{1,\ldots,d\}\). Applying Taylor's formula to 
\(s\longmapsto T_{st\mathbf e_j}y\),
 \(0\leq s\leq1,
\)
gives
\[
\Delta_{t \mathbf{e}_j}y
=
t\,\partial_jy
+
t^2\int_0^1
(1-s)T_{st \mathbf{e}_j}(\partial_j^2y)\,\mathrm ds,
\]
where, for \(p=\infty\), the identity and the integral are understood
in the weak-\(^*\) sense. The \(L_p\)-isometry of the translations
therefore yields
\[
t\|\partial_jy\|_p
\leq
2\|y\|_p
+
\frac{t^2}{2}\|\partial_j^2y\|_p.
\]
Dividing by \(t\) and optimizing over \(t>0\), we obtain
\begin{equation}\label{eq:Landau-Kolmogorov}
\|\partial_jy\|_p
\leq
2\|y\|_p^{1/2}\|\partial_j^2y\|_p^{1/2}.
\end{equation}

For \(\x\in W_p^k(\R_\theta^d)\), set
\[
A_0:=\|\x\|_p,
\quad
A_r
:=
|\x|_{\dot W_p^r(\R_\theta^d)}
=
\sum_{|\beta|=r}
\|\partial^\beta \x\|_p,
\quad 1\leq r\leq k.
\]
Let \(1\leq r<k\). For every multi-index \(\beta\) with
\(|\beta|=r-1\), one has
\(\partial^\beta\x\in W_p^2(\R_\theta^d)\). Applying
\eqref{eq:Landau-Kolmogorov} to
\(y=\partial^\beta\x\), summing over \(\beta\) and \(j\), and using
the Cauchy--Schwarz inequality, we obtain
\[
\begin{aligned}
A_r
&\leq
\sum_{|\beta|=r-1}\sum_{j=1}^d
\|\partial^{\beta+e_j}\x\|_p                                      \\
&\leq
2
\left(
\sum_{|\beta|=r-1}\sum_{j=1}^d
\|\partial^\beta \x\|_p
\right)^{1/2}
\left(
\sum_{|\beta|=r-1}\sum_{j=1}^d
\|\partial^{\beta+2e_j}\x\|_p
\right)^{1/2}                                                   
\lesssim_d
A_{r-1}^{1/2}A_{r+1}^{1/2}.
\end{aligned}
\]
An induction based on the preceding estimate yields
\[
A_m
\lesssim_{d,m,k}
A_0^{1-m/k}A_k^{m/k},
\qquad 1\leq m<k,
\]
which is precisely \eqref{eq:GN-interpolation}.
\end{proof}

\begin{corollary}\label{cor:top-order-Sobolev-characterization}
Let \(1\leq p<\infty\) and \(k\in\N\). Then
\[
W_p^k(\R_\theta^d)
=
\left\{
\x\in L_p(\R_\theta^d):
\partial^\beta \x\in L_p(\R_\theta^d)
\text{ for every }|\beta|=k
\right\}.
\]
Moreover, for every \(\x\in W_p^k(\R_\theta^d)\),
\begin{equation}\label{eq:in-hom-norm-equivalent}
\|\x\|_{W_p^k(\R_\theta^d)}
\approx_{d,k}
\|\x\|_{L_p(\R_\theta^d)}
+
|\x|_{\dot W_p^k(\R_\theta^d)}.
\end{equation}
\end{corollary}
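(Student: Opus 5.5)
The plan is to prove the set equality as two inclusions and to obtain the norm equivalence from Lemma~\ref{lem:Gagliardo-Nirenberg}.

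\emph{The inclusion ``$\subseteq$'' and the easy half of the norm equivalence.} If $\x\in W_p^k(\R_\theta^d)$, then by Definition~\ref{def:sobolev} every $\partial^\beta\x$ with $|\beta|\le k$ lies in $L_p(\R_\theta^d)$. In particular $\x$ itself and all $\partial^\beta\x$ with $|\beta|=k$ belong to $L_p(\R_\theta^d)$. Moreover,
\[
\|\x\|_{L_p(\R_\theta^d)}+|\x|_{\dot W_p^k(\R_\theta^d)}\le\|\x\|_{W_p^k(\R_\theta^d)},
\]
since the left side consists of some of the terms in \eqref{eq:inhomogeneous-Sobolev-norm}.

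\emph{The reverse norm bound.} Grouping \eqref{eq:inhomogeneous-Sobolev-norm} by order gives
\[
\|\x\|_{W_p^k}=\|\x\|_p+\sum_{m=1}^{k-1}|\x|_{\dot W_p^m}+|\x|_{\dot W_p^k}.
\]
For each $1\le m<k$, Lemma~\ref{lem:Gagliardo-Nirenberg} gives
\[
|\x|_{\dot W_p^m}\lesssim\|\x\|_p^{1-m/k}|\x|_{\dot W_p^k}^{m/k}.
\]
Young's inequality $a^{1-\theta}b^\theta\le a+b$ then turns this into $|\x|_{\dot W_p^m}\lesssim\|\x\|_p+|\x|_{\dot W_p^k}$. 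Summing over $m$ yields \eqref{eq:in-hom-norm-equivalent}, with constants depending only on $d$ and $k$.

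\emph{The inclusion ``$\supseteq$'', which is the main obstacle.} Lemma~\ref{lem:Gagliardo-Nirenberg} is stated only for $\x$ already in $W_p^k(\R_\theta^d)$. So for $\x\in L_p(\R_\theta^d)$ with all top-order derivatives in $L_p$, I must first show that the intermediate derivatives lie in $L_p$. I would do this by regularization.

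\begin{enumerate}[label=(\roman*)]
\item Fix $\psi\in\mcs(\R^d)$ with $\int\psi=1$ and set $\psi_\varepsilon=\varepsilon^{-d}\psi(\cdot/\varepsilon)$ and $\x_\varepsilon=\psi_\varepsilon*\x$.
\item Each $\x_\varepsilon$ lies in $W_p^k(\R_\theta^d)$, indeed in $W_p^N$ for all $N$. By \eqref{eq:conv-schw-commut-distri}, $\partial^\beta\x_\varepsilon=(\partial^\beta\psi_\varepsilon)*\x$, which is in $L_p$ by \eqref{eq:conv-Lp-bound}.
\item By \eqref{eq:conv-schw-commut-distri} and \eqref{eq:conv-Lp-bound} again, for $|\beta|=k$,
\[
\|\partial^\beta\x_\varepsilon\|_p=\|\psi_\varepsilon*\partial^\beta\x\|_p\le\|\psi\|_1\|\partial^\beta\x\|_p,
\]
and $\|\x_\varepsilon\|_p\le\|\psi\|_1\|\x\|_p$. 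Both bounds are uniform in $\varepsilon$.
\item Applying Lemma~\ref{lem:Gagliardo-Nirenberg} to $\x_\varepsilon$ shows that $\{\partial^\gamma\x_\varepsilon\}_{\varepsilon}$ is bounded in $L_p$ for every $|\gamma|<k$.
\item As $\varepsilon\to0$, $\x_\varepsilon\to\x$ in $\mcs'(\R_\theta^d)$. This uses strong continuity of translations on $L_p$, and in fact gives convergence in $L_p$ for $p<\infty$. Hence $\partial^\gamma\x_\varepsilon\to\partial^\gamma\x$ in $\mcs'(\R_\theta^d)$.
\end{enumerate}

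It remains to identify the distributional limit $\partial^\gamma\x$ with an element of $L_p$. For $1<p<\infty$, $L_p$ is reflexive, so a bounded sequence has a weakly convergent subsequence. Its weak limit must agree with the distributional limit $\partial^\gamma\x$, which is therefore in $L_p$.

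The case $p=1$ is the delicate point, since weak compactness fails. Two routes are available:
\begin{itemize}
\item \emph{Cauchy route.} Apply the Gagliardo--Nirenberg bound to the differences $\x_\varepsilon-\x_\delta$. Since $\|\x_\varepsilon-\x_\delta\|_1\to0$ and the top-order norms stay bounded, the interpolation inequality makes $\partial^\gamma\x_\varepsilon$ Cauchy in $L_1$. Its $L_1$ limit then equals the distributional limit $\partial^\gamma\x$. This argument works for every $1\le p<\infty$.
\item \emph{Explicit route.} Alternatively, use the Taylor formula from the proof of Lemma~\ref{lem:Gagliardo-Nirenberg} directly, expressing lower derivatives through differences and integrals of translates of $\partial^\beta\x$.
\end{itemize}
I would take the Cauchy route, since it handles all $p$ uniformly.
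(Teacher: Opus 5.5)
Your proposal is correct and follows essentially the same route as the paper. The paper also mollifies, applies Lemma~\ref{lem:Gagliardo-Nirenberg} to $x_\varepsilon-x_\delta$ so that the intermediate derivatives are Cauchy in $L_p$, identifies the limits with $\partial^\gamma x$ in $\mathcal S'(\mathbb R_\theta^d)$, and then gets \eqref{eq:in-hom-norm-equivalent} from the Gagliardo--Nirenberg bound plus the weighted AM--GM inequality. Your remark that boundedness of the top-order terms, rather than their convergence, already suffices for the Cauchy step is accurate and slightly economizes on what the paper uses.
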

\begin{proof}
One inclusion follows directly from the definition of
\(W_p^k(\R_\theta^d)\). For the converse, the case \(k=1\) is
immediate. Assume \(k\geq2\) and
\[
\x\in L_p(\R_\theta^d),
\qquad
\partial^\beta\x\in L_p(\R_\theta^d)
\quad\text{for every }|\beta|=k.
\]

Choose \(\psi\in\mcs(\R^d)\) such that
\(
\int_{\R^d}\psi(h)\,\mathrm dh=1,
\)
and set
\[
\psi_\varepsilon(h)
:=
\varepsilon^{-d}\psi(h/\varepsilon),
\qquad
\x_\varepsilon
:=
\psi_\varepsilon*\x,
\]
 By \eqref{eq:conv-schw-commut-distri},
\[
\partial^\gamma\x_\varepsilon
=
(\partial^\gamma\psi_\varepsilon)*\x
\qquad
\text{in }\mcs'(\R_\theta^d)
\]
for every \(\gamma\in\mathbb N_0^d\).  
Since
\(\partial^\gamma\psi_\varepsilon\in L_1(\R^d)\),
\eqref{eq:conv-Lp-bound} implies that
\(\x_\varepsilon\in W_p^r(\R_\theta^d)\) for every \(r\in\mathbb N\).
For \(|\beta|=k\), one also has
\[
\partial^\beta\x_\varepsilon
=
\psi_\varepsilon*(\partial^\beta\x).
\]
The \(L_p\)-approximation property of
\((\psi_\varepsilon)_{\varepsilon>0}\)
\cite[Theorem~3.10]{MSX20} therefore gives
\begin{equation}\label{eq:mollification-top-order-convergence}
\x_\varepsilon\longrightarrow\x
\quad\text{in }L_p(\R_\theta^d),
\qquad
\partial^\beta\x_\varepsilon
\longrightarrow\partial^\beta\x
\quad\text{in }L_p(\R_\theta^d)
\end{equation}
for every \(|\beta|=k\) as \(\varepsilon\to 0\).

Let \(1\leq m<k\). Applying
Lemma~\ref{lem:Gagliardo-Nirenberg} to
\(\x_\varepsilon-\x_\delta\), we obtain
\[
|\x_\varepsilon-\x_\delta|_{\dot W_p^m}
\lesssim_{d,m,k}
\|\x_\varepsilon-\x_\delta\|_p^{1-m/k}
|\x_\varepsilon-\x_\delta|_{\dot W_p^k}^{m/k}.
\]
It follows from
\eqref{eq:mollification-top-order-convergence} that
\[
|\x_\varepsilon-\x_\delta|_{\dot W_p^m}
\longrightarrow0,\quad \text{as}\quad \varepsilon, \delta \to 0.
\]
Hence, for every \(\gamma\) with \(|\gamma|=m\),
\((\partial^\gamma\x_\varepsilon)_{\varepsilon>0}\) is Cauchy in
\(L_p(\R_\theta^d)\). Let
\[
\y_\gamma
:=
\lim_{\varepsilon\to 0}
\partial^\gamma\x_\varepsilon
\quad\text{in }L_p(\R_\theta^d).
\]
The continuous embedding
\(L_p(\R_\theta^d)\hookrightarrow\mcs'(\R_\theta^d)\) gives
\[
\partial^\gamma\x_\varepsilon
\longrightarrow
\y_\gamma
\quad\text{in }\mcs'(\R_\theta^d)\qquad
(\varepsilon\to 0).
\]
By \eqref{eq:mollification-top-order-convergence},
\(\x_\varepsilon\to\x\) in \(\mcs'(\R_\theta^d)\), and hence
\[
\partial^\gamma\x_\varepsilon
\longrightarrow
\partial^\gamma\x
\quad\text{in }\mcs'(\R_\theta^d)
\qquad
(\varepsilon\to 0).
\]
Therefore,
\[
\partial^\gamma\x
=
\y_\gamma
\in L_p(\R_\theta^d),
\qquad
1\leq|\gamma|<k.
\]
Thus all distributional derivatives of \(\x\) of order at most \(k\)
belong to \(L_p(\R_\theta^d)\), and hence
\(\x\in W_p^k(\R_\theta^d)\).

It remains to establish \eqref{eq:in-hom-norm-equivalent}. By
Lemma~\ref{lem:Gagliardo-Nirenberg} and the weighted
arithmetic--geometric mean inequality,
\[
\begin{aligned}
|\x|_{\dot W_p^m(\R_\theta^d)}
&\lesssim_{d,m,k}
\|\x\|_{L_p(\R_\theta^d)}^{1-m/k}
|\x|_{\dot W_p^k(\R_\theta^d)}^{m/k}
&\lesssim_{d,m,k}
\|\x\|_{L_p(\R_\theta^d)}
+
|\x|_{\dot W_p^k(\R_\theta^d)},
\qquad
1\leq m<k.
\end{aligned}
\]
Summing over \(m=1,\ldots,k-1\), we obtain
\[
\begin{aligned}
\|\x\|_{W_p^k(\R_\theta^d)}
=
\|\x\|_{L_p(\R_\theta^d)}
+
\sum_{m=1}^k
|\x|_{\dot W_p^m(\R_\theta^d)}
\lesssim_{d,k}
\|\x\|_{L_p(\R_\theta^d)}
+
|\x|_{\dot W_p^k(\R_\theta^d)}.
\end{aligned}
\]
The reverse estimate follows from the definition of
\(\|\cdot\|_{W_p^k(\R_\theta^d)}\).
\end{proof}

\begin{remark}\label{rem:Sobolev-intersection}
Let \(1\leq p<\infty\) and \(k\in\N\). Since
\(\mcs(\R_\theta^d)\) is dense in \(W_p^k(\R_\theta^d)\) and
\[
|\x|_{\dot W_p^k(\R_\theta^d)}
\leq
\|\x\|_{W_p^k(\R_\theta^d)},
\qquad \x\in\mcs(\R_\theta^d),
\]
the identity map on \(\mcs(\R_\theta^d)\) extends uniquely to a
continuous linear map
\[
\iota_k:
W_p^k(\R_\theta^d)
\longrightarrow
\dot W_{p,0}^k(\R_\theta^d).
\]
The map \(\iota_k\) satisfies
\begin{equation}\label{eq:canonical-Sobolev-embedding}
\|\iota_k(\x)\|_{\dot W_{p,0}^k(\R_\theta^d)}
=
|\x|_{\dot W_p^k(\R_\theta^d)},
\qquad
\x\in W_p^k(\R_\theta^d),
\end{equation}
and, with \(\jmath_0\) and \(\jmath_1\) as in the proof of
Lemma~\ref{lem:compatible-homogeneous-couple},
\[
\jmath_1\circ\iota_k
=
\jmath_0\big|_{W_p^k(\R_\theta^d)}.
\]
Since \(\jmath_0\) is injective, so is \(\iota_k\). We therefore
identify \(W_p^k(\R_\theta^d)\) with its image under
\(\iota_k\) and write
\[
W_p^k(\R_\theta^d)
\hookrightarrow
\dot W_{p,0}^k(\R_\theta^d).
\]

With this identification,
\begin{equation}\label{eq:Sobolev-intersection}
W_p^k(\R_\theta^d)
=
L_p(\R_\theta^d)
\cap
\dot W_{p,0}^k(\R_\theta^d),
\end{equation}
where the intersection is understood in \(\mathcal Y_k\). Indeed, if
\(
\x\in
L_p(\R_\theta^d)
\cap
\dot W_{p,0}^k(\R_\theta^d),
\)
then its representative in
\(\dot W_{p,0}^k(\R_\theta^d)\) coincides with
\(
\bigl(\partial^\beta \x\bigr)_{|\beta|=k}
\)
in the sense of distributions.
Hence
\(\partial^\beta\x\in L_p(\R_\theta^d)\) for every \(|\beta|=k\), and
Corollary~\ref{cor:top-order-Sobolev-characterization} gives
\(\x\in W_p^k(\R_\theta^d)\).
\end{remark}

We conclude this subsection by characterizing the homogeneous
Sobolev seminorm in terms of moduli of smoothness. The analogous
results on \(\R^d\) and \(\mathbb T_\theta^d\) are given in
\cite[Proposition~2.4]{KL05} and
\cite[Theorem~2.20]{XXY18}, respectively. The following theorem
establishes the result on \(\R_\theta^d\).

\begin{theorem}\label{th3.7}
Let \(k\in\N\) and \(1\leq p<\infty\). Then, for every
\(\x\in W_p^k(\R_\theta^d)\),
\begin{equation}\label{eq:Sobolev-Lip}
\lim_{\varepsilon\downarrow0}
\frac{\omega_p^k(\x,\varepsilon)}{\varepsilon^k}
=
\sup_{\varepsilon>0}
\frac{\omega_p^k(\x,\varepsilon)}{\varepsilon^k}
\approx
|\x|_{\dot W_p^k(\R_\theta^d)},
\end{equation}
with equivalence constants depending only on \(d\) and \(k\).
\end{theorem}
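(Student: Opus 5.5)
The proof splits into three steps: an upper bound for $\omega_p^k(\x,\varepsilon)/\varepsilon^k$ valid for all $\varepsilon$, a lower bound in the limit $\varepsilon\downarrow0$, and a comparison between the limit and the supremum.

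\emph{Upper bound.} Take $\x\in\mcs(\R_\theta^d)$ first and fix $h\in\R^d$. Iterating the fundamental theorem of calculus for the $C_0$-group $s\mapsto T_{sh}$ gives
\[
\Delta_h^k\x=\int_{[0,1]^k}T_{(s_1+\cdots+s_k)h}\,(h\cdot\nabla)^k\x\,\mathrm ds .
\]
Since translations are $L_p$-isometries and $(h\cdot\nabla)^k=\sum_{|\beta|=k}\binom{k}{\beta}h^\beta\partial^\beta$, we get
\[
\|\Delta_h^k\x\|_p\le |h|^k\sum_{|\beta|=k}\binom{k}{\beta}\|\partial^\beta\x\|_p\lesssim_{d,k}|h|^k|\x|_{\dot W^k_p}.
\]
Both sides are continuous in the $W_p^k$-norm and $\mcs(\R_\theta^d)$ is dense in $W_p^k(\R_\theta^d)$, so the bound holds for all $\x\in W_p^k(\R_\theta^d)$. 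This yields $\sup_{\varepsilon>0}\omega_p^k(\x,\varepsilon)/\varepsilon^k\lesssim|\x|_{\dot W^k_p}$.

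\emph{Lower bound.} This is the main step. The target is
\[
\lim_{r\to0}r^{-k}\Delta_{r\mathbf e_j}^k\x=\partial_j^k\x\quad\text{in }L_p(\R_\theta^d).
\]
Apply the integral formula with $h=r\mathbf e_j$ to get
\[
r^{-k}\Delta^k_{r\mathbf e_j}\x-\partial_j^k\x=\int_{[0,1]^k}\bigl(T_{(s_1+\cdots+s_k)r\mathbf e_j}-I\bigr)\partial_j^k\x\,\mathrm ds .
\]
This tends to $0$ by strong continuity of $(T_t)$ on $L_p$ and dominated convergence. Consequently
\[
\liminf_{\varepsilon\downarrow0}\varepsilon^{-k}\omega_p^k(\x,\varepsilon)\ge\max_j\|\partial_j^k\x\|_p .
\]

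The remaining difficulty is that the right-hand side only involves pure derivatives, whereas $|\x|_{\dot W^k_p}$ involves the mixed ones $\partial^\beta\x$. I would handle this by polarization. Let $D_v:=v\cdot\nabla$. Then
\[
\lim_{r\to0}r^{-k}\Delta_{rv}^k\x=D_v^k\x,\qquad \|D_v^k\x\|_p\le\liminf_{\varepsilon\downarrow0}\varepsilon^{-k}\omega_p^k(\x,\varepsilon)\quad\text{for every unit vector }v.
\]
The classical polarization identity expresses each $\partial^\beta$ with $|\beta|=k$ as a finite linear combination, with coefficients depending only on $d$ and $k$, of operators $D_{v_i}^k$ over a fixed finite set of directions $v_i$ (for instance $v_i=\sum_j\epsilon_j\mathbf e_j/|\cdot|$ with $\epsilon_j\in\{0,\dots,k\}$). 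This identity is purely algebraic in commuting operators, so it transfers verbatim to $\R_\theta^d$. It gives
\[
|\x|_{\dot W^k_p}\lesssim_{d,k}\liminf_{\varepsilon\downarrow0}\varepsilon^{-k}\omega_p^k(\x,\varepsilon).
\]

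\emph{Existence of the limit and equality with the supremum.} I would use the scaling Lemma~\ref{lem:moduli-scaling}: $\omega(n\varepsilon)\le n^k\omega(\varepsilon)$. For $\varepsilon'<\varepsilon$ choose $n$ with $n\varepsilon'\ge\varepsilon>(n-1)\varepsilon'$. Then
\[
\frac{\omega(\varepsilon)}{\varepsilon^k}\le\frac{n^k\omega(\varepsilon')}{\varepsilon^k}\le\Bigl(\frac{n}{n-1}\Bigr)^k\frac{\omega(\varepsilon')}{\varepsilon'^k}.
\]
Letting $\varepsilon'\to0$ sends $n\to\infty$, so $\omega(\varepsilon)/\varepsilon^k\le\liminf_{\varepsilon'\downarrow0}\omega(\varepsilon')/\varepsilon'^k$ for every $\varepsilon>0$. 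Hence
\[
\sup_{\varepsilon>0}\frac{\omega(\varepsilon)}{\varepsilon^k}\le\liminf_{\varepsilon\downarrow0}\frac{\omega(\varepsilon)}{\varepsilon^k}\le\limsup_{\varepsilon\downarrow0}\frac{\omega(\varepsilon)}{\varepsilon^k}\le\sup_{\varepsilon>0}\frac{\omega(\varepsilon)}{\varepsilon^k},
\]
so the limit exists and equals the supremum. Combined with the two bounds above, this proves \eqref{eq:Sobolev-Lip}.
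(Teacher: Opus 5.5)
Your proof is correct. The upper bound is the same as the paper's. The lower bound and the identification $\lim=\sup$ follow genuinely different routes.

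For the lower bound, the paper does not use pure directional differences. Writing $\partial^\alpha=\partial_{j_1}\cdots\partial_{j_k}$, it shows that $\varepsilon^{-k}\Delta_{\varepsilon\mathbf e_{j_1}}\cdots\Delta_{\varepsilon\mathbf e_{j_k}}\x\to\partial^\alpha\x$ in $L_p$. It then controls this mixed difference by $\omega_p^k(\x,\varepsilon)$ through Lemma~\ref{lem3.6}, the Bennett--Sharpley identity that rewrites $\Delta_{h_1}\cdots\Delta_{h_k}$ as a signed sum of translates of iterated differences $\Delta^k_{h_D}$ with $|h_D|\le k\varepsilon$.

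You replace this with two ingredients. First, $\|\Delta^k_{rv}\x\|_p\le\omega_p^k(\x,r)$ holds trivially for unit $v$. Second, $k$-th powers of linear forms span the homogeneous polynomials of degree $k$, which you transfer to the commuting derivatives. This bypasses Lemma~\ref{lem3.6} entirely for this theorem, at the cost of a less explicit constant. The paper's choice is partly economical: it needs Lemma~\ref{lem3.6} anyway in the proof of Theorem~\ref{thm:K-functional}, to bound $\Delta^{m_1}_{\ell\varepsilon\mathbf e_1}\cdots\Delta^{m_d}_{\ell\varepsilon\mathbf e_d}\x$.

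For $\sup=\lim$, the paper's Lemma~\ref{lemma5} takes $n_j=\lfloor\varepsilon_0/\varepsilon_j\rfloor$ and uses the continuity of $\varepsilon\mapsto\varepsilon^{-k}\omega_p^k(\x,\varepsilon)$. Your choice $n=\lceil\varepsilon/\varepsilon'\rceil$, with the correction factor $(n/(n-1))^k\to1$, needs only monotonicity and the integer scaling \eqref{eq:moduli-integer-scaling}, so it is slightly more economical.

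One minor point: in the lower bound you apply the integral identity for $\Delta^k_{rv}\x$ to general $\x\in W_p^k(\R_\theta^d)$, but you derived it only on $\mcs(\R_\theta^d)$. The identity itself, not just the resulting bound, extends by density, since both sides are continuous from $W_p^k$ to $L_p$. So this is harmless.
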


We require the following two auxiliary lemmas for the proof.

\begin{lemma}\label{lemma5}
Let $1\leq p<\infty$, $k \in \mathbb{N}$, and $\x \in W^k_p(\mathbb{R}_\theta^d)$. Then the function
\(
\varepsilon
\longmapsto
\varepsilon^{-k}\omega_p^k(\x,\varepsilon)
\)
is continuous on \((0,\infty)\), and
\[
\sup_{\varepsilon > 0} \frac{\omega_p^k(\x,\varepsilon)}{\varepsilon^k} = \sup_{0 <\varepsilon\leq 1} \frac{\omega_p^k(\x,\varepsilon)}{\varepsilon^k} = \lim_{\varepsilon \to 0} \frac{\omega_p^k(\x,\varepsilon)}{\varepsilon^k}.
\]
\end{lemma}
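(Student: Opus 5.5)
Continuity of \(\varepsilon\mapsto\varepsilon^{-k}\omega_p^k(\x,\varepsilon)\) on \((0,\infty)\) follows at once from Remark~\ref{rem:modulus-properties}(ii), because \(\omega_p^k(\x,\cdot)\) is continuous there when \(p<\infty\) and \(\varepsilon^{-k}\) is continuous. For the identities, the tool is the integer scaling inequality \eqref{eq:moduli-integer-scaling} of Lemma~\ref{lem:moduli-scaling}. Applied with \(n\in\N\), it gives
\[
\frac{\omega_p^k(\x,n\varepsilon)}{(n\varepsilon)^k}
\leq
\frac{\omega_p^k(\x,\varepsilon)}{\varepsilon^k},
\qquad n\in\N,\ \varepsilon>0.
\]
So \(\Phi(\varepsilon):=\varepsilon^{-k}\omega_p^k(\x,\varepsilon)\) does not increase along geometric-type sequences \(\varepsilon,2\varepsilon,3\varepsilon,\dots\), even though it need not be monotone.

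\emph{Step 1 (reduction to \((0,1]\)).} Let \(\varepsilon>1\) and put \(n=\lceil\varepsilon\rceil\), \(\delta=\varepsilon/n\in(1/2,1]\). The displayed inequality with \(\delta\) in place of \(\varepsilon\) gives \(\Phi(\varepsilon)=\Phi(n\delta)\le\Phi(\delta)\). Hence \(\sup_{\varepsilon>0}\Phi=\sup_{0<\varepsilon\le1}\Phi\).

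\emph{Step 2 (the limit at \(0\) equals the supremum).} Write \(S:=\sup_{\varepsilon>0}\Phi(\varepsilon)\). I first check \(S<\infty\). For \(\x\in W_p^k(\R_\theta^d)\), iterating the Taylor identity of Lemma~\ref{lem:Gagliardo-Nirenberg} gives the representation
\[
\Delta_h^k\x=\int_{[0,1]^k}T_{(s_1+\cdots+s_k)h}\,(h\cdot\nabla)^k\x\,\mathrm ds,
\]
which follows from \(\Delta_h=\int_0^1T_{sh}(h\cdot\nabla)\,\mathrm ds\) on \(W_p^1\) and the commutation of the factors. Consequently \(\|\Delta_h^k\x\|_p\lesssim_{d,k}|h|^k|\x|_{\dot W_p^k}\), so \(S\) is finite. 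Trivially \(\limsup_{\varepsilon\to0}\Phi\le S\). For the reverse, fix \(\eta>0\) and choose \(\varepsilon_0\) with \(\Phi(\varepsilon_0)>S-\eta\). For any small \(\varepsilon\), let \(n=\lfloor\varepsilon_0/\varepsilon\rfloor\), so that \(n\varepsilon\le\varepsilon_0<(n+1)\varepsilon\). Monotonicity of \(\omega_p^k\) together with the scaling bound gives
\[
\omega_p^k(\x,\varepsilon_0)\le\omega_p^k(\x,(n+1)\varepsilon)\le(n+1)^k\omega_p^k(\x,\varepsilon).
\]
Therefore
\[
\Phi(\varepsilon)\ge\Bigl(\frac{\varepsilon_0}{(n+1)\varepsilon}\Bigr)^k\Phi(\varepsilon_0)\ge\Bigl(\frac{n}{n+1}\Bigr)^k(S-\eta).
\]
As \(\varepsilon\to0\) we have \(n\to\infty\), so \(\liminf_{\varepsilon\to0}\Phi\ge S-\eta\). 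Letting \(\eta\to0\), the limit exists and equals \(S\).

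The main obstacle I expect is Step 2. Because \(\Phi\) is not monotone, the limit at \(0\) is not automatic, and the floor/ceiling comparison above is what produces it. The only analytic input is the finiteness of \(S\) via the Taylor representation on \(W_p^k\). That representation should be justified on \(\mcs(\R_\theta^d)\) first and then extended by density, using the strong continuity of the translation group on \(L_p\) and the fact that \(\Phi(\varepsilon)\) depends continuously on \(\x\) in the \(W_p^k\) norm for fixed \(\varepsilon\).
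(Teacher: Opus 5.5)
Your proof is correct and follows the paper's route: the integer scaling inequality \eqref{eq:moduli-integer-scaling} reduces the supremum to $(0,1]$, and comparing a fixed $\varepsilon_0$ with integer multiples of small $\varepsilon$ identifies the limit. The one variation is that you use monotonicity of $\omega_p^k$ and the factor $(n/(n+1))^k$, whereas the paper uses continuity of $\varepsilon^{-k}\omega_p^k(x,\varepsilon)$ along $n_j\varepsilon_j\to\varepsilon_0$. Your finiteness check for $S$ is harmless but not needed for the lemma, since if $S=\infty$ the same comparison shows the quotient tends to $\infty$. Also, the bound you use there is the first-order identity from the proof of Theorem~\ref{th3.7}, not an iterate of the second-order Taylor formula in Lemma~\ref{lem:Gagliardo-Nirenberg}.
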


\begin{proof}
  Set
\[
A(\varepsilon)
:=
\varepsilon^{-k}\omega_p^k(\x,\varepsilon),
\qquad
\varepsilon>0.
\]
By Remark~\ref{rem:modulus-properties}, \(A\) is continuous on
\((0,\infty)\).
Moreover, \eqref{eq:moduli-integer-scaling} gives 
\begin{equation}\label{eq:A-integer-scaling}
\begin{aligned}
A(n\varepsilon)
&=
(n\varepsilon)^{-k}
\omega_p^k(\x,n\varepsilon)
&\leq
\varepsilon^{-k}
\omega_p^k(\x,\varepsilon)
=
A(\varepsilon),
\qquad
n\in\N,\quad \varepsilon>0.
\end{aligned}
\end{equation}
For \(\tau>0\), choose \(n\in\N\) such that \(n\geq\tau\), and set
\(s:=\tau/n\in(0,1]\). Then
\[
A(\tau)
=
A(ns)
\leq
A(s)
\leq
\sup_{0<\varepsilon\leq1}A(\varepsilon).
\]
Taking the supremum over \(\tau>0\), we obtain
\[
\sup_{\varepsilon>0}A(\varepsilon)
=
\sup_{0<\varepsilon\leq1}A(\varepsilon).
\]

It remains to prove that
\[
\lim_{\varepsilon\downarrow0}A(\varepsilon)
=
\sup_{0<\varepsilon\leq1}A(\varepsilon).
\]
Fix \(\varepsilon_0\in(0,1]\), and choose a sequence
\((\varepsilon_j)_{j\geq1}\) such that
\[
\varepsilon_j\downarrow0,
\quad \text{and}\quad
\lim_{j\to \infty}A(\varepsilon_j)
=
\liminf_{\varepsilon\downarrow0}A(\varepsilon).
\]
For all sufficiently large \(j\), set
\(
n_j
:=
\left\lfloor
\frac{\varepsilon_0}{\varepsilon_j}
\right\rfloor
\geq1.
\)
Then
\[
n_j\varepsilon_j
\leq
\varepsilon_0
<
(n_j+1)\varepsilon_j,
\]
and hence \(n_j\varepsilon_j\to\varepsilon_0\). By
\eqref{eq:A-integer-scaling} and the continuity of \(A\),
\[
A(\varepsilon_0)
=
\lim_{j\to\infty}A(n_j\varepsilon_j)
\leq
\lim_{j\to\infty}A(\varepsilon_j)
=
\liminf_{\varepsilon\downarrow0}A(\varepsilon).
\]
Since \(\varepsilon_0\in(0,1]\) was arbitrary,
\[
\sup_{0<\varepsilon\le 1} A(\varepsilon) \le \liminf_{\tau\downarrow 0} A(\tau) \le \limsup_{\tau\downarrow 0} A(\tau) \le \sup_{0<\varepsilon\le 1} A(\varepsilon), 
\]
so the limit exists and equals the supremum.
\end{proof}

The next lemma relates the mixed difference $\Delta_{h_1}\cdots\Delta_{h_k}$ to the iterated difference $\Delta_h^k$.
The corresponding classical identity is given in
\cite[Lemma~5.4.11]{BS88}.

\begin{lemma}\label{lem3.6}
Let $h_1,\ldots,h_k\in\mathbb{R}^d$. Then
\begin{equation}\label{eq:mixed-to-iterated}
\Delta_{h_1}\cdots\Delta_{h_k}
= \sum_{D\subseteq\{1,\ldots,k\}} (-1)^{|D|}\,T_{h^*_D}\,\Delta_{h_D}^k,
\end{equation}
where the sum runs over all subsets $D$ of $\{1,\ldots,k\}$, $|D|$ is the cardinality of $D$, and
\[
h^*_D = \sum_{j\in D} h_j, \qquad h_D = -\sum_{j\in D} j^{-1} h_j.
\]
Furthermore, for $\x\in L_p(\mathbb{R}^d_\theta)$ with $1\leq p \leq \infty$,
\begin{equation}\label{eq:mixed-equiv}
\sup_{\substack{|h_1|\le\varepsilon,\,\ldots,\,|h_k|\le\varepsilon}}
\bigl\|\Delta_{h_1}\cdots\Delta_{h_k}\, \x\bigr\|_p
\approx \omega_p^k(\x,\varepsilon),\quad \varepsilon>0.
\end{equation}
\end{lemma}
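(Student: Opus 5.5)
The plan is to reduce the identity \eqref{eq:mixed-to-iterated} to the classical one. All operators involved are Fourier multipliers: $T_h=e_h(D)$, $\Delta_h=d_h(D)$, and $g_1(D)g_2(D)=(g_1g_2)(D)$ on $\mcs'(\R_\theta^d)$. The last relation holds because $g(D)\x=U_\theta(g\widehat{\x})$ and $U_\theta$ is a bijection. It therefore suffices to prove the scalar identity
\[
\prod_{j=1}^k\bigl(e^{2\pi i\langle h_j,\xi\rangle}-1\bigr)
=
\sum_{D\subseteq\{1,\ldots,k\}}(-1)^{|D|}\,e^{2\pi i\langle h_D^*,\xi\rangle}\bigl(e^{2\pi i\langle h_D,\xi\rangle}-1\bigr)^k,
\qquad \xi\in\R^d.
\]
This is exactly the symbol-level content of \cite[Lemma~5.4.11]{BS88}, applied to the commutative function $f=e_\xi$ on $\R^d$: the classical identity holds for every function, in particular for characters, and evaluating both sides at $0$ gives the display above. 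For a self-contained argument, expand the right-hand side binomially. The coefficient of each exponential $e^{2\pi i\langle \sum_j c_jh_j,\xi\rangle}$ can then be matched by the inclusion–exclusion computation of Brudnyi–Krugljak/Johnen–Scherer, and I would cite it rather than redo it. Lifting through $U_\theta$ then gives \eqref{eq:mixed-to-iterated} on $\mcs'(\R_\theta^d)$, and in particular on $L_p(\R_\theta^d)$.

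For \eqref{eq:mixed-equiv}, the lower bound is immediate: take $h_1=\cdots=h_k=h$ with $|h|\le\varepsilon$. Then the left side is at least $\sup_{|h|\le\varepsilon}\|\Delta_h^k\x\|_p=\omega_p^k(\x,\varepsilon)$, since $\Delta_0^k=0$.

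For the upper bound, apply \eqref{eq:mixed-to-iterated} together with the $L_p$-isometry of $T_{h_D^*}$ to get
\[
\|\Delta_{h_1}\cdots\Delta_{h_k}\x\|_p\le\sum_{D}\|\Delta_{h_D}^k\x\|_p .
\]
For $|h_j|\le\varepsilon$ we have $|h_D|\le\sum_{j=1}^k j^{-1}\varepsilon\le(1+\log k)\varepsilon$. Hence each term is bounded by $\omega_p^k(\x,(1+\log k)\varepsilon)$, which by \eqref{eq:moduli-real-scaling} is at most $(2+\log k)^k\omega_p^k(\x,\varepsilon)$. Summing over the $2^k$ subsets gives the bound with a constant depending only on $k$. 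The term $D=\emptyset$ has $h_D=0$ and vanishes, so it causes no trouble.

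The only real obstacle is verifying the scalar identity, i.e.\ the combinatorics of \cite[Lemma~5.4.11]{BS88}. Since it is a statement about exponentials, which are ordinary functions on $\R^d$, I would simply invoke the classical lemma rather than re-derive it. The noncommutative content is entirely handled by the multiplier calculus of Subsection~\ref{section2.4}.
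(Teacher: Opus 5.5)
Your proposal is correct and follows essentially the paper's route. The paper transfers \cite[Lemma~5.4.11]{BS88} by noting that its proof is purely algebraic in $\Delta_h=T_h-I$ and the commutative group law $T_hT_s=T_{h+s}$, whereas you transfer it through the scalar symbol identity and the multiplicativity of $g\mapsto g(D)$. For \eqref{eq:mixed-equiv}, the paper uses the bound $|h_D|\le k\varepsilon$ together with \eqref{eq:moduli-integer-scaling}, giving the constant $(2k)^k$, while your harmonic-sum bound combined with \eqref{eq:moduli-real-scaling} gives $2^k(2+\log k)^k$; the lower bound is the same in both.
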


\begin{proof}
The proof of the classical identity
\cite[Lemma~5.4.11]{BS88} is algebraic and uses only
\[
\Delta_h=T_h-I
\qquad\text{and}\qquad
T_hT_s=T_{h+s}=T_sT_h,
\qquad
h,s\in\R^d.
\]
Since these identities also hold for the quantum translations, the
same argument gives \eqref{eq:mixed-to-iterated}.

Suppose that
\(
|h_j|\leq\varepsilon,\)
\(
j=1,\ldots,k.
\)
For every \(D\subseteq\{1,\ldots,k\}\),
\[
|h_D|
\leq
\sum_{j\in D}\frac{|h_j|}{j}
\leq
k\varepsilon.
\]
Taking \(L_p\)-norms in \eqref{eq:mixed-to-iterated} and using the
\(L_p\)-isometry of the translations, we obtain
\[
\|\Delta_{h_1}\cdots\Delta_{h_k}\x\|_p
\leq
\sum_{D\subseteq\{1,\ldots,k\}}
\|\Delta_{h_D}^k\x\|_p
\leq
2^k\omega_p^k(\x,k\varepsilon)
\leq
(2k)^k\omega_p^k(\x,\varepsilon).
\]
where the last inequality follows from
\eqref{eq:moduli-integer-scaling}. Taking the supremum over
\(|h_1|,\ldots,|h_k|\leq\varepsilon\) gives one direction of
\eqref{eq:mixed-equiv}.
The reverse estimate follows by taking
\(h_1=\cdots=h_k=h\).
\end{proof}

\begin{proof}[Proof of Theorem \ref{th3.7}]
 For \(y\in W_p^1(\R_\theta^d)\), applying the fundamental theorem of
calculus to \(t\mapsto T_{th}y\) gives
\begin{align}\label{eq:one-diff-identity}
\Delta_hy
=
\int_0^1 T_{th}(h\cdot\nabla_\theta y)\,\mathrm dt,
\end{align}
where
\(
h\cdot\nabla_\theta
:=
\sum_{j=1}^d h_j\partial_j
\). Iterating \eqref{eq:one-diff-identity}, we obtain, for
\(\x\in W_p^k(\R_\theta^d)\),
\[
\Delta_h^k \x
=
\int_{[0,1]^k}
T_{(t_1+\cdots+t_k)h}
(h\cdot\nabla_\theta)^k \x
\,\mathrm dt_1\cdots\mathrm dt_k.
\]
Minkowski's integral inequality and the \(L_p\)-isometry of the
translations yield
\[
\|\Delta_h^k \x\|_p
\leq
\|(h\cdot\nabla_\theta)^k \x\|_p.
\]
By the multinomial formula,
\[
(h\cdot\nabla_\theta)^k \x
=
\sum_{|\alpha|=k}
\binom{k}{\alpha}
h^\alpha\partial^\alpha \x.
\]
Since \(|h^\alpha|\leq |h|^k\), it follows that
\begin{equation}\label{eq:kth-difference-Sobolev-bound}
\|\Delta_h^k \x\|_p
\leq
\|(h\cdot\nabla_\theta)^k \x\|_p
\leq
k!\,|h|^k
|\x|_{\dot W_p^k(\R_\theta^d)}.
\end{equation}
Consequently,
\[
\omega_p^k(\x,\varepsilon)
\leq
k!\,\varepsilon^k
|\x|_{\dot W_p^k(\R_\theta^d)},
\qquad
\varepsilon>0,
\]
and hence
\[
\sup_{\varepsilon>0}
\frac{\omega_p^k(\x,\varepsilon)}{\varepsilon^k}
\lesssim
|\x|_{\dot W_p^k(\R_\theta^d)}.
\] 

For the converse estimate, fix
\(\alpha\in\mathbb N_0^d\) with \(|\alpha|=k\), and write
\[
\partial^\alpha
=
\partial_{j_1}\cdots\partial_{j_k}
\]
for suitable \(j_1,\ldots,j_k\in\{1,\ldots,d\}\).
Repeated application of \eqref{eq:one-diff-identity} gives
\[
\varepsilon^{-k}
\Delta_{\varepsilon\mathbf e_{j_1}}\cdots
\Delta_{\varepsilon\mathbf e_{j_k}}\x
=
\int_{[0,1]^k}
T_{\varepsilon\sum_{\ell=1}^k
t_\ell\mathbf e_{j_\ell}}
\partial^\alpha\x
\,\mathrm dt_1\cdots\mathrm dt_k.
\]
Hence, by Minkowski's integral inequality,
\[
\begin{aligned}
&\left\|
\varepsilon^{-k}
\Delta_{\varepsilon\mathbf e_{j_1}}\cdots
\Delta_{\varepsilon\mathbf e_{j_k}}\x
-
\partial^\alpha \x
\right\|_p\\
&\qquad\leq
\int_{[0,1]^k}
\left\|
\left(
T_{\varepsilon\sum_{\ell=1}^k
t_\ell\mathbf e_{j_\ell}}-I
\right)
\partial^\alpha \x
\right\|_p
\,\mathrm dt_1\cdots\mathrm dt_k
\longrightarrow0
\end{aligned}
\]
as \(\varepsilon\downarrow0\). Here the convergence follows from the
strong continuity of the translations and the dominated convergence
theorem, since the integrand is bounded by
\(2\|\partial^\alpha\x\|_p\).
 By Lemma~\ref{lem3.6},
\[
\left\|
\Delta_{\varepsilon\mathbf e_{j_1}}\cdots
\Delta_{\varepsilon\mathbf e_{j_k}}\x
\right\|_p
\lesssim_k
\omega_p^k(\x,\varepsilon).
\]
It follows that
\[
\begin{aligned}
\|\partial^\alpha\x\|_p
&=
\lim_{\varepsilon\downarrow0}
\varepsilon^{-k}
\left\|
\Delta_{\varepsilon\mathbf e_{j_1}}\cdots
\Delta_{\varepsilon\mathbf e_{j_k}}\x
\right\|_p
&\lesssim_k
\liminf_{\varepsilon\downarrow0}
\frac{\omega_p^k(\x,\varepsilon)}{\varepsilon^k}.
\end{aligned}
\]
Summing over all \(\alpha\) with \(|\alpha|=k\), we obtain
\[
|\x|_{\dot W_p^k(\R_\theta^d)}
\lesssim_{d,k}
\liminf_{\varepsilon\downarrow0}
\frac{\omega_p^k(\x,\varepsilon)}{\varepsilon^k}
\leq
\sup_{\varepsilon>0}
\frac{\omega_p^k(\x,\varepsilon)}{\varepsilon^k}.
\]
Together with the preceding upper estimate and
Lemma~\ref{lemma5}, this proves \eqref{eq:Sobolev-Lip}.
 \end{proof}

\subsection{Besov spaces on \(\R_\theta^d\) and their characterization
via differences}
\label{section3.2}

We next recall the Littlewood-Paley definition of the inhomogeneous
Besov spaces on \(\R_\theta^d\); see, for example,
\cite{CH2026,CHWW2025,ME24}. Let \(\varphi\in C_c^\infty(\R^d)\) satisfy
\[
\operatorname{supp}\varphi
\subset
\left\{
\xi\in\R^d:
\frac12\leq|\xi|\leq2
\right\},
\qquad
\sum_{j\in\Z}\varphi(2^{-j}\xi)=1,
\quad \xi\neq0.
\]
For \(j\geq1\), set
\[
\varphi^{(j)}(\xi):=\varphi(2^{-j}\xi),
\qquad
\varphi^{(0)}(\xi)
:=
1-\sum_{j\geq1}\varphi(2^{-j}\xi).
\]
Then
\[
\sum_{j=0}^\infty\varphi^{(j)}(\xi)=1,
\qquad \xi\in\R^d,
\]
and
\[
\operatorname{supp}\varphi^{(j)}
\subset
\left\{
\xi\in\R^d:
2^{j-1}\leq|\xi|\leq2^{j+1}
\right\},
\qquad j\geq1.
\]
For \(j\in\N_0\), let \(\varphi^{(j)}(D)\) denote the Fourier
multiplier on \(\mcs'(\R_\theta^d)\) with symbol
\(\varphi^{(j)}\).

\begin{definition}\label{def:Besov}
Let \(\alpha\in\R\) and \(1\leq p,q\leq\infty\). The inhomogeneous
Besov space \(B_{p,q}^\alpha(\R_\theta^d)\) consists of all
\(\x\in\mcs'(\R_\theta^d)\) such that \(\|\x\|_{B^\alpha_{p,q}} <\infty\), where
\[
\|\x\|_{B^\alpha_{p,q}} := \left(
\sum_{j=0}^\infty
2^{j\alpha q}
\|\varphi^{(j)}(D)\x\|_p^q
\right)^{1/q},\qquad \text{for}\quad q<\infty,
\]
and
\[
\|\x\|_{B^\alpha_{p,\infty}} :=\sup_{j\geq0}
2^{j\alpha}\|\varphi^{(j)}(D)\x\|_p.
\]
\end{definition}

The definition of \(B_{p,q}^\alpha(\R_\theta^d)\) is independent of
the choice of \(\varphi\), in the sense that different admissible
choices yield equivalent norms. Moreover,
\(B_{p,q}^\alpha(\R_\theta^d)\) is a Banach space, and
\(\mcs(\R_\theta^d)\) is dense in it whenever \(p,q<\infty\); see
\cite[Lemma~3.15]{ME24}.

We next introduce the difference norms used throughout this paper and
recall their equivalence with the Littlewood--Paley norms defined
above. Let
\(m\in\N\), \(0<\alpha<m\), \(1\leq p,q\leq\infty\), and
\(\x\in L_p(\R_\theta^d)\). With \(\omega_p^m\) as in
Definition~\ref{def:difference}, define
\begin{equation}\label{eq:difference-Besov-seminorm}
|\x|_{B_{p,q}^{\alpha,m}(\R_\theta^d)}
:=
\begin{cases}
\displaystyle
\left(
\int_0^\infty
\bigl[
\varepsilon^{-\alpha}
\omega_p^m(\x,\varepsilon)
\bigr]^q
\,\frac{\mathrm d\varepsilon}{\varepsilon}
\right)^{1/q},
& 1\leq q<\infty,\\[2.2ex]
\displaystyle
\sup_{\varepsilon>0}
\varepsilon^{-\alpha}
\omega_p^m(\x,\varepsilon),
& q=\infty,
\end{cases}
\end{equation}
and 
\begin{equation}\label{eq:difference-Besov-norm}
\|\x\|_{B_{p,q}^{\alpha,m}(\R_\theta^d)}
:=
\|\x\|_{L_p(\R_\theta^d)}
+
|\x|_{B_{p,q}^{\alpha,m}(\R_\theta^d)}.
\end{equation}

The following characterization is obtained from
\cite[Theorem~3.6]{CH2026} by taking \(N=0\); the finite sum over the
coordinate directions in that theorem is absorbed into the
equivalence constants.

\begin{lemma}
\label{lem:Besov-difference-characterization}
Let \(1\leq p,q\leq\infty\), \(\alpha>0\), and let
\(m\in\mathbb N\) satisfy \(m>\alpha\). Then, for every
\(\x\in B_{p,q}^\alpha(\R_\theta^d)\),
\begin{equation}\label{eq:Besov-difference-characterization}
\|\x\|_{B_{p,q}^\alpha(\R_\theta^d)}
\approx
\|\x\|_{B_{p,q}^{\alpha,m}(\R_\theta^d)}.
\end{equation}
The implicit constants are independent of \(\x\).
\end{lemma}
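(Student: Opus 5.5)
The plan is to transfer the classical Littlewood--Paley proof to $\R_\theta^d$. Every Fourier multiplier that appears will be a Schwartz (or at least $L_1$-kernel) symbol, so by \eqref{eq:multiplier-convolution} and \eqref{eq:conv-Lp-bound} its norm on $L_p(\R_\theta^d)$ is bounded by the classical quantity $\|\check g\|_{L_1(\R^d)}$. Consequently every kernel estimate from the Euclidean theory carries over with identical constants, uniformly in $\theta$ and $1\le p\le\infty$. We also use the commutativity of the translations $T_h$ with all multipliers, and the identity $\Delta_h^m=(d_h^m)(D)$ from Definition~\ref{def:difference}.

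\emph{Step 1: the difference norm is bounded by the Littlewood--Paley norm.} Since $\alpha>0$, $\|\x\|_p\le\sum_j\|\varphi^{(j)}(D)\x\|_p\lesssim\|\x\|_{B^\alpha_{p,q}}$ by H\"older in $j$. Fix $\chi\in C_c^\infty$ equal to $1$ on $\{1/2\le|\xi|\le2\}$, with an analogous cut-off for $j=0$. For $j\ge1$ we then have
\[
\Delta_h^m\varphi^{(j)}(D)\x=\bigl[d_h^m\chi(2^{-j}\cdot)\bigr](D)\,\varphi^{(j)}(D)\x.
\]
The classical scaling computation gives $\|\mcf^{-1}(d_h^m\chi(2^{-j}\cdot))\|_{L_1}\lesssim\min\{1,(2^j|h|)^m\}$. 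Summing over $j$ yields
\[
\omega_p^m(\x,\varepsilon)\lesssim\sum_j\min\{1,(2^j\varepsilon)^m\}\,\|\varphi^{(j)}(D)\x\|_p .
\]
Because $0<\alpha<m$, the discrete Hardy inequalities on the two ranges $2^j\varepsilon\le1$ and $2^j\varepsilon>1$ bound $\int_0^\infty[\varepsilon^{-\alpha}\omega_p^m]^q\,\mathrm d\varepsilon/\varepsilon$ by $\sum_j2^{j\alpha q}\|\varphi^{(j)}(D)\x\|_p^q$. Here the integral is split dyadically as $\varepsilon\in[2^{-l},2^{-l+1}]$, using monotonicity of $\omega_p^m$.

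\emph{Step 2: the Littlewood--Paley norm is bounded by the difference norm.} The $j=0$ term is $\lesssim\|\x\|_p$. For $j\ge1$ and $t\in[1,2]$, set $S_t(\xi):=\sum_{l=1}^d|d_{t2^{-j}\mathbf e_l}(\xi)|^{2m}$. I will check, by a compactness argument on the annulus after rescaling $\xi\mapsto2^{-j}\xi$, that for a suitable small absolute constant $c$, taking $h=ct2^{-j}\mathbf e_l$ makes $S_t$ bounded below on $\operatorname{supp}\varphi^{(j)}$, uniformly in $j$ and $t$. This gives the decomposition
\[
\varphi^{(j)}=\sum_{l=1}^d\frac{\varphi^{(j)}\,\overline{d_{h_l}}^{\,m}}{S_t}\,d_{h_l}^m ,
\]
where the rescaled symbols $\varphi\,\overline{d}^m/S$ are smooth and compactly supported uniformly in $t$. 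Their kernels therefore have uniformly bounded $L_1$-norms, and hence
\[
\|\varphi^{(j)}(D)\x\|_p\lesssim\sum_l\|\Delta^m_{h_l}\x\|_p\lesssim\omega_p^m(\x,c\,2^{1-j}).
\]
Multiplying by $2^{j\alpha}$, summing in $\ell_q$, and comparing the sum with the integral via monotonicity of $\omega_p^m$ on dyadic intervals gives $\|\x\|_{B^\alpha_{p,q}}\lesssim\|\x\|_p+|\x|_{B^{\alpha,m}_{p,q}}$. The case $q=\infty$ is handled identically with suprema in place of sums.

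\emph{Main obstacle.} I expect the main work to be the uniform $L_1$ kernel bounds in Step 2: choosing the direction and scale so that the symbol $\sum_l|d_{h_l}|^{2m}$ does not vanish on the annulus, and controlling all derivatives of the quotient independently of $j$. This is purely Euclidean, and the transference through \eqref{eq:conv-Lp-bound} then makes it valid on $\R_\theta^d$ without further noncommutative input. Alternatively, as the paper indicates, one may cite \cite[Theorem~3.6]{CH2026} with $N=0$ directly.
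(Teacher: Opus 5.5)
Your argument is correct, but it takes a different route from the paper: the paper gives no proof of this lemma and simply imports it from \cite[Theorem~3.6]{CH2026} with $N=0$, absorbing the finite sum over coordinate directions into the constants.

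Your proof is a self-contained transference argument. Every operator you use is a Fourier multiplier with a $C_c^\infty$ symbol. By \eqref{eq:multiplier-convolution} and \eqref{eq:conv-Lp-bound}, its norm on $L_p(\R_\theta^d)$ is therefore controlled by the $L_1$-norm of a classical Euclidean kernel. Everything else is purely commutative:
\begin{itemize}
\item the bound $\min\{1,(2^j|h|)^m\}$ for the kernels of $d_h^m\chi(2^{-j}\cdot)$;
\item the two discrete Hardy inequalities, which use exactly $0<\alpha<m$;
\item the lower bound for $\sum_l|d_{h_l}|^{2m}$ on the rescaled annulus. This holds, for instance, once $c\le 1/8$, since some coordinate satisfies $|\eta_l|\ge 1/(2\sqrt d)$ there.
\end{itemize}

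What your route buys is independence from the external preprint and transparent constants, depending only on $d,m,\alpha,q$ and in particular not on $\theta$ or $p$. It also handles the isotropic modulus $\omega_p^m$ directly. Step~1 controls $\Delta_h^m$ for arbitrary directions $h$, while Step~2 only needs coordinate differences, which are trivially dominated by $\omega_p^m$. The citation, by contrast, is much shorter.

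The only points you must actually verify are routine:
\begin{itemize}
\item For $\alpha>0$, the Littlewood--Paley series of $\x$ converges absolutely in $L_p$ and its sum is $\x$. This is needed so that $\x\in L_p$ and $\Delta_h^m$ can be applied termwise.
\item The rescaled quotient symbols need uniform derivative bounds.
\end{itemize}

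Two cosmetic remarks. First, $S_t$ should be defined with $h_l=ct2^{-j}\mathbf e_l$, not $t2^{-j}\mathbf e_l$. Second, the parameter $t\in[1,2]$ is superfluous: monotonicity of $\omega_p^m$ already gives $\|\varphi^{(j)}(D)\x\|_p\lesssim\omega_p^m(\x,c2^{-j})$, and a dyadic comparison with the integral then finishes Step~2.
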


For later use, we record the following Marchaud
inequality relating moduli of smoothness of different orders.

\begin{proposition}
\label{prop:Marchaud}
Let \(1\leq p\leq\infty\), \(\x\in L_p(\R_\theta^d)\), and let
\(0<n<k\) be integers. Then, for every \(\varepsilon>0\),
\begin{equation}\label{eq:Marchaud}
2^{n-k}\omega_p^k(\x,\varepsilon)
\leq
\omega_p^n(\x,\varepsilon)
\lesssim_{n,k}
\varepsilon^n
\int_\varepsilon^\infty
\frac{\omega_p^k(\x,\delta)}{\delta^n}
\,\frac{\mathrm d\delta}{\delta}.
\end{equation}
\end{proposition}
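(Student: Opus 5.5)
The lower bound is purely algebraic. Fix \(0<|h|\le\varepsilon\). Since \(\Delta_h^k=\Delta_h^{k-n}\Delta_h^n\), and \(\Delta_h^{k-n}\) is a sum of \(2^{k-n}\) signed translations, the \(L_p\)-isometry of the \(T_h\) gives
\[
\|\Delta_h^k\x\|_p\le 2^{k-n}\|\Delta_h^n\x\|_p .
\]
Taking the supremum over \(h\) yields \(2^{n-k}\omega_p^k(\x,\varepsilon)\le\omega_p^n(\x,\varepsilon)\); this is just \eqref{eq:modulus-monotone-order}.

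For the upper bound I will follow the classical Marchaud argument (as in DeVore--Lorentz or Bennett--Sharpley). That argument uses only three ingredients: the commutation \(T_hT_s=T_{h+s}\), the isometry of the translations, and the binomial identity \eqref{eq:binomial-diff}. All three are available on \(\R_\theta^d\). It suffices to treat \(k=n+1\) and then iterate: substitute the \((n+1)\)-order estimate into itself and use Fubini-type rearrangements of the integrals.

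\textbf{Key identity.} In the symbol calculus, \(d_{2h}=d_h(e_h+1)\), so \(d_{2h}^n=d_h^n(e_h+1)^n\). Expanding \((e_h+1)^n=2^n+\sum_{j}c_j\,d_h^{\,j}\) with \(j\ge1\) shows that \(2^n d_h^n-d_{2h}^n\) equals \(d_h^{n+1}\) times a polynomial in \(e_h\). Applying \(g(D)\) gives the operator identity
\[
\Delta_h^n=2^{-n}\Delta_{2h}^n+2^{-n}P(T_h)\Delta_h^{n+1},
\]
where \(P\) is a fixed polynomial whose coefficients have \(\ell_1\)-norm \(C_n\). Hence
\[
\|\Delta_h^n\x\|_p\le 2^{-n}\|\Delta_{2h}^n\x\|_p+C_n\|\Delta_h^{n+1}\x\|_p .
\]
Iterating \(N\) times gives
\[
\|\Delta_h^n\x\|_p\le 2^{-nN}\|\Delta_{2^Nh}^n\x\|_p+C_n\sum_{j=0}^{N-1}2^{-nj}\|\Delta_{2^jh}^{n+1}\x\|_p .
\]
The first term is bounded by \(2^{-nN}2^n\|\x\|_p\), which tends to \(0\) as \(N\to\infty\). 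Taking the supremum over \(|h|\le\varepsilon\) gives
\[
\omega_p^n(\x,\varepsilon)\lesssim\sum_{j\ge0}2^{-nj}\omega_p^{n+1}(\x,2^j\varepsilon).
\]
By monotonicity, \(\omega_p^{n+1}(\x,2^j\varepsilon)\le\omega_p^{n+1}(\x,\delta)\) for \(\delta\in[2^j\varepsilon,2^{j+1}\varepsilon]\). So each summand is comparable to \(\varepsilon^n\int_{2^j\varepsilon}^{2^{j+1}\varepsilon}\delta^{-n}\omega_p^{n+1}(\x,\delta)\,\mathrm d\delta/\delta\), and summing gives the \(k=n+1\) case.

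\textbf{Iteration.} For general \(k\), induct on \(k-n\). Given
\[
\omega_p^n(\x,\varepsilon)\lesssim\varepsilon^n\int_\varepsilon^\infty\delta^{-n}\omega_p^{k-1}(\x,\delta)\,\frac{\mathrm d\delta}{\delta},
\]
insert
\[
\omega_p^{k-1}(\x,\delta)\lesssim\delta^{k-1}\int_\delta^\infty u^{-(k-1)}\omega_p^k(\x,u)\,\frac{\mathrm du}{u}
\]
and swap the order of integration. The inner integral is \(\int_\varepsilon^u\delta^{k-1-n}\,\mathrm d\delta/\delta\lesssim u^{k-1-n}\), since \(k-1-n>0\). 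This leaves
\[
\varepsilon^n\int_\varepsilon^\infty u^{-n}\omega_p^k(\x,u)\,\frac{\mathrm du}{u},
\]
which is the claim.

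The main point to check is the case \(p=\infty\). There the operator identities must be read in the weak-\(^*\) sense, but they come from the symbol identity in \(\mcs'(\R_\theta^d)\) together with \eqref{eq:binomial-diff}, so they hold as finite sums of translations. No obstacle beyond bookkeeping of the constants \(C_n\) is expected.
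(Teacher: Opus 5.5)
Your proposal is correct and follows essentially the same route as the paper. Both arguments use the symbol identity $d_{2h}^n=d_h^n(e_h+1)^n$ to obtain the doubling recursion, compare the dyadic sum with the integral via monotonicity of the modulus, and iterate from order $n$ to order $k$ using Tonelli's theorem. The differences are cosmetic: you iterate the doubling inequality for fixed $h$ before taking the supremum, and you integrate out one intermediate variable per induction step instead of composing the kernels into $(\delta-\varepsilon)^{k-n-1}\delta^{-k}$.
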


\begin{proof}
The left-hand inequality follows immediately from
\eqref{eq:modulus-monotone-order}. For the reverse inequality, observe
that
\[
 d_{2h}^n
 = 2^n d_h^n
   + \Bigl[(e_{h}+1)^n - 2^n\Bigr] d_h^n
 = 2^n d_h^n + \sum_{j=1}^n \binom{n}{j} \sum_{\ell=0}^{j-1} e_{\ell h} d_h^{n+1},
 \]
 which gives
\begin{equation}\label{eq:Marchaud-identity}
\Delta_h^n = 2^{-n}\Bigl\{\Delta_{2h}^n - \sum_{j=1}^n\binom{n}{j}\sum_{\ell=0}^{j-1}\Delta_h^{n+1}T_{\ell h}\Bigr\}.
\end{equation}

 Taking $L_p$-norms in \eqref{eq:Marchaud-identity} and using the
\(L_p\)-isometry of the translations, we obtain
\begin{equation}\label{eq:Marchaud-recursion}
\omega_p^n(\x,\varepsilon) \le \tfrac{n}{2}\,\omega_p^{n+1}(\x,\varepsilon) + 2^{-n}\,\omega_p^n(\x,2\varepsilon).
\end{equation}
Iterating \eqref{eq:Marchaud-recursion} $N$ times gives
\[
\omega_p^n(\x,\varepsilon) \le \frac{n}{2}\sum_{j=0}^{N-1}2^{-jn}\,\omega_p^{n+1}(\x,2^j\varepsilon) + 2^{-Nn}\,\omega_p^n(\x,2^N\varepsilon).
\]
Since
\(\omega_p^n(\x,\delta)\leq2^n\|\x\|_p\), the last term tends to zero
as \(N\to\infty\). Moreover, by the monotonicity of the modulus,
\[
\begin{aligned}
\varepsilon^n
\int_{2^j\varepsilon}^{2^{j+1}\varepsilon}
\frac{\omega_p^{n+1}(\x,\delta)}{\delta^n}
\,\frac{\mathrm d\delta}{\delta}
\geq
2^{-n(j+1)}(\log2)
\omega_p^{n+1}(\x,2^j\varepsilon),\quad  j\geq0.
\end{aligned}
\]
Summing over \(j\geq0\), we obtain
\begin{equation}\label{eq:Marchaud-one-step}
\omega_p^n(\x,\varepsilon) \le \frac{n2^{n-1}}{\log 2}\, \varepsilon^n \int_\varepsilon^\infty\frac{\omega_p^{n+1}(\x,\delta)}{\delta^n}\,\frac{\mathrm{d}\delta}{\delta}.
\end{equation}
Iterating \eqref{eq:Marchaud-one-step} from \(n\) to \(k-1\) and
applying Tonelli's theorem yields
\[
\omega_p^n(\x,\varepsilon)
\lesssim_{n,k}
\varepsilon^n
\int_\varepsilon^\infty
\frac{(\delta-\varepsilon)^{k-n-1}}{\delta^k}
\omega_p^k(\x,\delta)
\,\mathrm d\delta.
\]
Since
\(
(\delta-\varepsilon)^{k-n-1}
\leq
\delta^{k-n-1},
\)
we conclude that
\[
\omega_p^n(\x,\varepsilon)
\lesssim_{n,k}
\varepsilon^n
\int_\varepsilon^\infty
\frac{\omega_p^k(\x,\delta)}{\delta^n}
\,\frac{\mathrm d\delta}{\delta},
\]
which proves \eqref{eq:Marchaud}.
\end{proof}

\section{BBM and MS limits for higher-order Besov norms on
\(\R_\theta^d\)} \label{section5}

In this section, we prove Theorems~\ref{th1.2}
and \ref{th1.3} by the real interpolation method and conclude with the corresponding limits
for the inhomogeneous Sobolev couple.

\subsection{\(K\)-functional characterizations and real interpolation}
\label{subsec:difference-interpolation}

We begin with two elementary observations showing that the normalized
BBM limit determined entirely by the asymptotic behavior of the modulus of smoothness as
$\varepsilon\downarrow0$, whereas the normalized MS limit is
determined by its behavior as \(\varepsilon\to\infty\).

At the BBM endpoint, the contribution from
scales $\varepsilon\geq1$ vanishes after normalization. Indeed, since
\(\omega_p^k(\x,\varepsilon)\leq 2^k\|\x\|_p\), we have
\begin{align}\label{eq:BBM-large-scale-bound}
0
&\leq
(k-\alpha)
\int_1^\infty
\varepsilon^{-\alpha q}
\omega_p^k(\x,\varepsilon)^q
\,\frac{\mathrm d\varepsilon}{\varepsilon}
\nonumber\\
&\qquad\qquad\leq
2^{kq}(k-\alpha)\|\x\|_p^q
\int_1^\infty
\varepsilon^{-\alpha q}
\,\frac{\mathrm d\varepsilon}{\varepsilon}
=
\frac{2^{kq}(k-\alpha)}{\alpha q}\|\x\|_p^q\longrightarrow0
\qquad (\alpha\uparrow k).
\end{align}
This is in contrast to the MS endpoint as 
$\alpha\downarrow0$, where instead the large-scale part provides the leading
contribution. More precisely, if
$\x\in B_{p,q}^{\alpha_0}(\R_\theta^d)$ for some $\alpha_0\in(0,k)$, then, for $0<\alpha<\alpha_0$,
\begin{align}\label{eq:ms-small-scale}
0
\leq
\alpha\int_0^1
\varepsilon^{-\alpha q}
\omega_p^k(\x,\varepsilon)^q
\,\frac{\mathrm{d}\varepsilon}{\varepsilon}
\leq
\alpha\int_0^1
\varepsilon^{-\alpha_0 q}
\omega_p^k(\x,\varepsilon)^q
\,\frac{\mathrm{d}\varepsilon}{\varepsilon}
\longrightarrow0
\qquad (\alpha\downarrow0).
\end{align}
A direct analysis of the remaining integrals gives the following
exact identities:
\begin{enumerate}[label=\textnormal{(\roman*)}]
\item
For every \(\x\in W_p^k(\R_\theta^d)\),
\begin{equation}\label{eq:exact-BBM-modulus-limit}
\lim_{\alpha\uparrow k}
(k-\alpha)^{1/q}
\|\x\|_{B_{p,q}^{\alpha,k}(\R_\theta^d)}
=
q^{-1/q}
\lim_{\varepsilon\downarrow0}
\frac{\omega_p^k(\x,\varepsilon)}{\varepsilon^k};
\end{equation}

\item If
\(\x\in B_{p,q}^{\alpha_0}(\R_\theta^d)\) for some
\(\alpha_0\in(0,k)\), then
\begin{equation}\label{eq:exact-MS-modulus-limit}
\lim_{\alpha\downarrow0}
\alpha^{1/q}
\|\x\|_{B_{p,q}^{\alpha,k}(\R_\theta^d)}
=
q^{-1/q}\lim_{\varepsilon\to\infty}
\omega_p^k(\x,\varepsilon),
\end{equation}
\end{enumerate}
see Appendix~\ref{app:exact-endpoint-asymptotics} for the detailed derivation of
\eqref{eq:exact-BBM-modulus-limit} and
\eqref{eq:exact-MS-modulus-limit}. Moreover, the limit on the right-hand side of
\eqref{eq:exact-BBM-modulus-limit} exists by Theorem~\ref{th3.7}, while
\(\omega_p^k(\x,\infty)\) exists because
\(\varepsilon\mapsto\omega_p^k(\x,\varepsilon)\) is nondecreasing
and bounded by \(2^k\|\x\|_p\). Hence, both normalized limits exist and their exact values are
determined by the corresponding endpoint behavior of the modulus of
smoothness.

The identities \eqref{eq:exact-BBM-modulus-limit} and
\eqref{eq:exact-MS-modulus-limit} reduce the proofs of
Theorems~\ref{th1.2} and~\ref{th1.3} to identifying their right-hand
sides, up to equivalence, with the homogeneous Sobolev seminorm and
the \(L_p\)-norm, respectively. These estimates also underlie the
real-interpolation approach. Once the \(K\)-functional of the compatible couple
\(
\bigl(
L_p(\mathbb R_\theta^d),
\dot W_{p,0}^k(\mathbb R_\theta^d)
\bigr)
\) 
is characterized, these two identifications enter the verification of the
\(X_1\)- and \(X_0\)-normality conditions in Milman's theorem.
We can then apply
Milman's theorem to treat the BBM and MS limits within a common
real-interpolation framework.

We first recall the relevant notions from real interpolation and
Milman's theorem.
Let $\vec X=(X_0,X_1)$ be a compatible Banach couple. Its
$K$-functional is defined by
\[
K(\x,\varepsilon;\vec X) = \inf\bigl\{\|\x_0\|_{X_0}+\varepsilon\,\|\x_1\|_{X_1}: \x=\x_0+\x_1,\\ \x_i\in X_i,\,i=0,1\bigr\},\quad \varepsilon>0.
\]
For $0<\eta<1$ and $1\leq q<\infty$, the real interpolation space
$\vec X_{\eta,q}=(X_0,X_1)_{\eta,q}$ is equipped with the norm
\[
\|\x\|_{\vec X_{\eta,q}}
:=
\left(
\int_0^\infty
\bigl[\varepsilon^{-\eta}K(\x,\varepsilon;\vec X)\bigr]^q
\,\frac{\mathrm{d}\varepsilon}{\varepsilon}
\right)^{1/q},
\]
with the usual supremum modification when $q=\infty$; see \cite{BS88,BJ76} for background on interpolation theory.

Since $\varepsilon \mapsto K(\x,\varepsilon;\vec X)$ is increasing and
$\varepsilon\mapsto \varepsilon^{-1}K(\x,\varepsilon;\vec X)$ is decreasing, the following
extended limits exist:
\begin{align}\label{eq:5.9}
\|\x\|_{\widetilde X_0}
:=
\lim_{\varepsilon\to\infty}K(\x,\varepsilon;\vec X),
\qquad
\|\x\|_{\widetilde X_1}
:=
\lim_{\varepsilon\downarrow0}\frac{K(\x,\varepsilon;\vec X)}{\varepsilon}.
\end{align}
The associated Gagliardo closures of \(X_0\) and \(X_1\) are defined by
\[
\widetilde X_i
:=
\bigl\{
\x\in X_0+X_1:
\|\x\|_{\widetilde X_i}<\infty
\bigr\},
\qquad i=0,1.
\]
Following \cite[Definition~1 and Remark~1]{MM05}, we shall say that the couple
$\vec X$ is ``normal'' if
\begin{align}\label{eq:normal}
\|\x\|_{\widetilde X_i}
\approx
\|\x\|_{X_i},
\qquad \x\in X_i,\quad i=0,1.
\end{align}

The following endpoint continuity theorem of Milman
\cite[Theorem~1 and Remark~3]{MM05} provides the abstract framework
for the two limiting arguments.

\begin{theorem}\label{thm:Milman}
Let $\vec X=(X_0,X_1)$ be a compatible Banach couple, and let
$1\leq q<\infty$.
\begin{enumerate}[label=\textnormal{(\roman*)}]
\item
If \eqref{eq:normal} holds for $i=1$, then, for every $\x\in X_0\cap X_1$,
\begin{equation}\label{eq:Milman-upper-endpoint}
\lim_{\eta\uparrow1}
\bigl(\eta(1-\eta)\bigr)^{1/q}
\|\x\|_{\vec X_{\eta,q}}
=
q^{-1/q}\|\x\|_{\widetilde X_1}
\approx
q^{-1/q}\|\x\|_{X_1}.
\end{equation}

\item
If \eqref{eq:normal} holds for $i=0$, then, for every
\(
\x\in
X_0\cap
\bigcup_{0<\eta<1}\vec X_{\eta,q},
\)
we have
\begin{equation}\label{eq:Milman-lower-endpoint}
\lim_{\eta\downarrow0}
\bigl(\eta(1-\eta)\bigr)^{1/q}
\|\x\|_{\vec X_{\eta,q}}
=
q^{-1/q}\|\x\|_{\widetilde X_0}
\approx
q^{-1/q}\|\x\|_{X_0}.
\end{equation}
\end{enumerate}
\end{theorem}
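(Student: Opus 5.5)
The plan is to prove both parts directly from the definition of $\|\cdot\|_{\vec X_{\eta,q}}$. The idea is that the normalized weight $(1-\eta)q\,t^{(1-\eta)q}\,\frac{\mathrm dt}{t}$ on $(0,1)$, respectively $\eta q\,t^{-\eta q}\,\frac{\mathrm dt}{t}$ on $(1,\infty)$, is a probability measure. As $\eta\uparrow1$, respectively $\eta\downarrow0$, it concentrates at $t=0$, respectively $t=\infty$. Throughout, write $K(t):=K(\x,t;\vec X)$. We use only that $K$ is nondecreasing, that $K(t)/t$ is nonincreasing, and the trivial bounds $K(t)\le\|\x\|_{X_0}$ and $K(t)\le t\|\x\|_{X_1}$.

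\textbf{Part (i).} Fix $\x\in X_0\cap X_1$ and split $\|\x\|_{\vec X_{\eta,q}}^q=\int_0^1+\int_1^\infty$.

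For the large scales, $K(t)\le\|\x\|_{X_0}$ gives
\[
\int_1^\infty t^{-\eta q}K(t)^q\,\tfrac{\mathrm dt}{t}\le\frac{\|\x\|_{X_0}^q}{\eta q},
\]
so this piece multiplied by $\eta(1-\eta)$ tends to $0$ as $\eta\uparrow1$.

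For the small scales, write
\[
\int_0^1 t^{-\eta q}K(t)^q\,\tfrac{\mathrm dt}{t}=\int_0^1 t^{(1-\eta)q}\bigl(K(t)/t\bigr)^q\,\tfrac{\mathrm dt}{t}.
\]
The function $K(t)/t$ increases as $t\downarrow0$ to $L:=\|\x\|_{\widetilde X_1}$, and $L\le\|\x\|_{X_1}<\infty$. Given $\epsilon>0$, choose $\delta>0$ with $L-\epsilon\le K(t)/t\le L$ on $(0,\delta)$. Then
\[
(1-\eta)\int_0^\delta t^{(1-\eta)q}\,\tfrac{\mathrm dt}{t}=\frac{\delta^{(1-\eta)q}}{q}\to\frac1q.
\]
On $(\delta,1)$ the integrand is bounded by $\delta^{-q}\|\x\|_{X_0}^q$, so that piece times $(1-\eta)$ tends to $0$. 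Hence $\lim_{\eta\uparrow1}(1-\eta)\int_0^1(\cdots)$ lies between $(L-\epsilon)^q/q$ and $L^q/q$. Since $\eta\to1$ and $\epsilon$ is arbitrary, we obtain the identity in \eqref{eq:Milman-upper-endpoint}. The final equivalence $\approx q^{-1/q}\|\x\|_{X_1}$ is exactly the normality hypothesis \eqref{eq:normal} for $i=1$.

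\textbf{Part (ii).} This is symmetric; one can also reduce it formally to Part (i) via $K(\x,t;X_0,X_1)=tK(\x,t^{-1};X_1,X_0)$. I will argue directly. Take $\x\in X_0\cap\vec X_{\eta_0,q}$.

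For the small scales and $\eta<\eta_0$, we have $t^{-\eta q}\le t^{-\eta_0q}$ on $(0,1)$. Hence
\[
\eta\int_0^1 t^{-\eta q}K^q\,\tfrac{\mathrm dt}{t}\le\eta\|\x\|_{\vec X_{\eta_0,q}}^q\to0.
\]
This also shows that $\x\in\vec X_{\eta,q}$ for all $\eta<\eta_0$, because the piece over $(1,\infty)$ is at most $\|\x\|_{X_0}^q/(\eta q)$.

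For the large scales, $K(t)$ increases to $M:=\|\x\|_{\widetilde X_0}$ as $t\to\infty$, and $M\le\|\x\|_{X_0}<\infty$. Choose $R$ with $M-\epsilon\le K\le M$ on $(R,\infty)$. Then
\[
\eta q\int_R^\infty t^{-\eta q}\,\tfrac{\mathrm dt}{t}=R^{-\eta q}\to1,
\]
while $\eta\int_1^R(\cdots)\le\eta\,\|\x\|_{X_0}^q\log R\to0$. Multiplying by $(1-\eta)\to1$ and letting $\epsilon\to0$ yields \eqref{eq:Milman-lower-endpoint}, and normality for $i=0$ gives the equivalence.

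The main point needing care is the concentration step, namely the interchange of the limit in $\eta$ with the monotone endpoint limit of $K$. The $\epsilon$-splitting above handles it, because monotonicity of $K(t)$ and of $K(t)/t$ guarantees that the endpoint limits exist and are dominated by the finite norms $\|\x\|_{X_1}$, respectively $\|\x\|_{X_0}$. The other potentially delicate point is in (ii): the hypothesis $\x\in\vec X_{\eta_0,q}$ is needed only to kill the small-scale contribution uniformly.
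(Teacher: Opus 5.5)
Your argument is correct. The tail bounds hold, and monotonicity of $K(t)$ and of $K(t)/t$ gives the endpoint limits $L\le\|\x\|_{X_1}$ and $M\le\|\x\|_{X_0}$. You also correctly use $\x\in\vec X_{\eta_0,q}$ only to kill the small-scale piece in (ii).

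The paper gives no proof of this theorem; it imports it from Milman \cite[Theorem~1 and Remark~3]{MM05}. Your direct proof is nevertheless the same concentration mechanism the paper uses in its appendix.
\begin{itemize}
\item Your part (i) is Lemma~\ref{lem:endpoint-concentration}(i) with $\delta=1-\eta$ and $F(t)=K(\x,t;\vec X)/t$, which is bounded on $(0,1]$ by $\|\x\|_{\widetilde X_1}$, together with your tail estimate on $(1,\infty)$.
\item Your part (ii) is Lemma~\ref{lem:endpoint-concentration}(ii) with $\delta=\eta$ and $F(t)=K(\x,t;\vec X)\le\|\x\|_{X_0}$, together with your domination of $(0,1)$ by the $\eta_0$-norm.
\end{itemize}
The only difference is technique: you use an $\epsilon$-splitting, whereas the appendix substitutes $t=e^{\mp u}$ and applies dominated convergence.

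Compared with the citation, your self-contained route makes three things visible:
\begin{itemize}
\item The identities with $\|\x\|_{\widetilde X_i}$ hold without any normality assumption; normality in \eqref{eq:normal} is used only to replace $\widetilde X_i$ by $X_i$.
\item In (i) you only need $\x\in X_0$ with $\|\x\|_{\widetilde X_1}<\infty$.
\item It explains why the paper's closing remark can bypass Milman's theorem entirely.
\end{itemize}

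Two cosmetic fixes: write $\max\{L-\epsilon,0\}$ and $\max\{M-\epsilon,0\}$ in the lower bounds, and take $R\ge1$ in (ii).
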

\vspace{0.3cm}

For \(1\leq p<\infty\) and \(k\in\mathbb N\), set
\[
\vec X_{\mathrm{inhom}}
:=
\bigl(
L_p(\R_\theta^d),
W_p^k(\R_\theta^d)
\bigr),
\qquad
\vec X_{\mathrm{hom}}
:=
\bigl(
L_p(\R_\theta^d),
\dot W_{p,0}^k(\R_\theta^d)
\bigr).
\]
Since \(W_p^k(\R_\theta^d)\) embeds continuously into
\(L_p(\R_\theta^d)\), the couple
\(\vec X_{\mathrm{inhom}}\) is compatible, while the compatibility of
\(\vec X_{\mathrm{hom}}\) follows from
Lemma~\ref{lem:compatible-homogeneous-couple}. 
To implement the interpolation scheme above, we first characterize
the \(K\)-functional of \(\vec X_{\mathrm{inhom}}\) and then derive
the corresponding characterization for
\(\vec X_{\mathrm{hom}}\). 
The following $K$-functional estimates are the quantum Euclidean
analogues of the classical Johnen--Scherer theorem \cite{JS76}; see
also \cite[Theorem~5.4.12]{BS88}.

\begin{theorem}
\label{thm:K-functional}
Let $1\leq p<\infty$ and $k\in\mathbb N$. Then, for every
$\x\in L_p(\R_\theta^d)$ and every $\varepsilon>0$,
\begin{equation}\label{eq:K-functional}
K\bigl(
\x,\varepsilon^k;
L_p(\R_\theta^d),
W_p^k(\R_\theta^d)
\bigr)
\approx
\min\{1,\varepsilon^k\}\|\x\|_{L_p(\R_\theta^d)}
+
\omega_p^k(\x,\varepsilon),
\end{equation}
with implicit constants depending only on $d$ and $k$.
\end{theorem}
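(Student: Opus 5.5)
We follow the classical Johnen--Scherer scheme, with a Steklov-type averaging operator in place of the usual smoothing construction. Throughout, we use the equivalent norm $\|\x\|_p+|\x|_{\dot W_p^k}$ on $W_p^k(\R_\theta^d)$ from Corollary~\ref{cor:top-order-Sobolev-characterization}, and we treat the two ranges $\varepsilon\ge1$ and $0<\varepsilon<1$ separately where needed.

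\emph{Lower bound.} For any decomposition $\x=\x_0+\x_1$ with $\x_1\in W_p^k(\R_\theta^d)$, the seminorm property of the modulus, \eqref{eq:diff-bound} and \eqref{eq:kth-difference-Sobolev-bound} give
\[
\omega_p^k(\x,\varepsilon)\le 2^k\|\x_0\|_p+k!\,\varepsilon^k|\x_1|_{\dot W_p^k}\lesssim \|\x_0\|_p+\varepsilon^k\|\x_1\|_{W_p^k}.
\]
Taking the infimum yields $\omega_p^k(\x,\varepsilon)\lesssim K(\x,\varepsilon^k)$. In addition, $\min\{1,\varepsilon^k\}\|\x\|_p\le \|\x_0\|_p+\varepsilon^k\|\x_1\|_p$ by the triangle inequality, and $\|\x_1\|_p\le\|\x_1\|_{W_p^k}$. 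This gives the full lower bound.

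\emph{Upper bound for $\varepsilon\ge1$.} Here we take the trivial decomposition $\x_0=\x$, $\x_1=0$, which gives $K\le\|\x\|_p$. It then suffices to have $\|\x\|_p\lesssim$ RHS, which holds since the RHS contains $\|\x\|_p$ when $\varepsilon\ge1$.

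\emph{Upper bound for $0<\varepsilon<1$.} This is the main step. We define the averaged operator
\[
P_\varepsilon\x:=\int_{[0,1]^d}\sum_{j=1}^{k}(-1)^{j+1}\binom{k}{j}\,T_{j\varepsilon u}\x\,\mathrm du ,
\]
where the integral is a Bochner integral. It satisfies $\x-P_\varepsilon\x=(-1)^k\int_{[0,1]^d}\Delta_{\varepsilon u}^k\x\,\mathrm du$, so by Minkowski's inequality $\|\x-P_\varepsilon\x\|_p\le\omega_p^k(\x,\sqrt d\,\varepsilon)\lesssim_{d,k}\omega_p^k(\x,\varepsilon)$, using Lemma~\ref{lem:moduli-scaling}. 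Also $\|P_\varepsilon\x\|_p\le(2^k-1)\|\x\|_p$.

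It remains to show $\varepsilon^k|P_\varepsilon\x|_{\dot W_p^k}\lesssim\omega_p^k(\x,\varepsilon)$. We replace the single cube average by iterated one-dimensional Steklov averages in each coordinate direction: for $\x_j^{(s)}:=\int_{[0,1]^d}T_{s\varepsilon u}\x\,\mathrm du$, the identity $\partial_i\int_0^1T_{s\varepsilon t\mathbf e_i}y\,\mathrm dt=(s\varepsilon)^{-1}\Delta_{s\varepsilon\mathbf e_i}y$ (from \eqref{eq:one-diff-identity}) shows that $k$ successive averagings produce $\partial^\beta$ as $(s\varepsilon)^{-k}$ times a mixed difference $\Delta_{s\varepsilon\mathbf e_{i_1}}\cdots\Delta_{s\varepsilon\mathbf e_{i_k}}$ applied to averaged translates. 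Concretely, we take $P_\varepsilon:=\sum_{j=1}^k(-1)^{j+1}\binom kj A_{j\varepsilon}^k$, with $A_\delta$ the product of coordinate Steklov averages of step $\delta$, raised to the $k$-th power, so that $\x-P_\varepsilon\x$ is still an average of $k$-th differences with steps $\lesssim\varepsilon$. The $k$-th derivatives of each term are then averages of mixed differences of $\x$ with steps $\le k\varepsilon$, divided by $(j\varepsilon)^k$. Lemma~\ref{lem3.6} controls these mixed differences by $\omega_p^k(\x,k\varepsilon)\lesssim\omega_p^k(\x,\varepsilon)$, and this yields $|P_\varepsilon\x|_{\dot W_p^k}\lesssim\varepsilon^{-k}\omega_p^k(\x,\varepsilon)$. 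Since $\|P_\varepsilon\x\|_p\lesssim\|\x\|_p$, we obtain $\varepsilon^k\|P_\varepsilon\x\|_{W_p^k}\lesssim\varepsilon^k\|\x\|_p+\omega_p^k(\x,\varepsilon)$, which is the claimed bound. Membership $P_\varepsilon\x\in W_p^k$ follows from the same identities, which place the $k$-th distributional derivatives in $L_p$, together with Corollary~\ref{cor:top-order-Sobolev-characterization}.

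The main obstacle is this last step. One must commute the averaging with the $L_p$-generators and justify the derivative identities for general $\x\in L_p$ rather than Schwartz elements. We plan to verify them on $\mcs(\R_\theta^d)$ via \eqref{eq:conv-schwartz} and then pass to $L_p$ by density, using the $L_p$-boundedness of all operators involved and the closedness of the distributional derivatives.
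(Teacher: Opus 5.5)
Your proposal is correct and is essentially the paper's proof. Your final operator $P_\varepsilon=\sum_{j=1}^k(-1)^{j+1}\binom kj A_{j\varepsilon}^k$ is exactly the paper's $\z$, since $A_{j\varepsilon}^k\x=\int_{\mathbb I^k}T_{j\varepsilon U(\mathbf u)}\x\,\mathrm d\mathbf u=\z_j$; the paper likewise spends $\beta_i$ of the $k$ averages in direction $\mathbf e_i$ to produce $\Delta^{\beta_1}_{j\varepsilon\mathbf e_1}\cdots\Delta^{\beta_d}_{j\varepsilon\mathbf e_d}\x$, bounds it via Lemma~\ref{lem3.6} and \eqref{eq:moduli-integer-scaling}, and passes from $\mcs(\R_\theta^d)$ to $L_p(\R_\theta^d)$ by density (your initial single-cube $P_\varepsilon$ gives only one derivative per direction, so the switch to $k$-fold averaging is genuinely needed).
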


\begin{proof}
For brevity, write
\[
K(\x,\varepsilon^k)
=
K\bigl(
\x,\varepsilon^k;
L_p(\R_\theta^d),
W_p^k(\R_\theta^d)
\bigr).
\]
Suppose that $\x=\y+\z$, where $\y\in L_p(\R_\theta^d)$ and $\z\in W_p^k(\R_\theta^d)$. Then
\[
\min(1,\varepsilon^k)\|\x\|_p\leq \|\y\|_p+\,\varepsilon^k\|\z\|_p\leq \|\y\|_p+\,\varepsilon^k\|\z\|_{W_p^k}.
\]
Moreover, by Remark \ref{rem:modulus-properties} and Theorem~\ref{th3.7},
\[
\omega_p^k(\x,\varepsilon) \le \omega_p^k(\y,\varepsilon)+\omega_p^k(\z,\varepsilon) \lesssim \|\y\|_p + \varepsilon^k|\z|_{\dot{W}_p^k} \lesssim \|\y\|_p + \varepsilon^k\|\z\|_{W_p^k}.
\]
Taking the infimum over all such decompositions gives
\[
\min(1,\varepsilon^k)\|\x\|_p + \omega_p^k(\x,\varepsilon) \lesssim K(\x,\varepsilon^k).
\]

We turn to the converse estimate. Since $K(\x,\varepsilon^k)\le\|\x\|_p$ for $\varepsilon\ge 1$, it suffices to show
\begin{equation}\label{eq:K-upper}
K(\x,\varepsilon^k) \lesssim \varepsilon^k\|\x\|_p + \omega_p^k(\x,\varepsilon),\quad 0<\varepsilon<1.
\end{equation}
We first assume that $\x\in\mcs(\R_\theta^d)$, so that all
differentiations under the Bochner integral below are justified. 
Let $\mathbb{I}=[0,1)^d$ in $\R^d$, and for
$\mathbf u=(u_1,\ldots,u_k)\in\mathbb I^k$ set
\[
U(\mathbf u)=u_1+\cdots+u_k.
\]
Define
\begin{align*}
\y = (-1)^k\int_{\mathbb{I}}\cdots\int_{\mathbb{I}}\Delta_{\varepsilon U(\mathbf u)}^k\x\,\mathrm{d}u_1\cdots\mathrm{d}u_k,\quad \z = \x-\y.
\end{align*}
Then
\begin{equation}\label{eq:y-bound}
\|\y\|_p
\leq
\int_{\mathbb I^k}
\bigl\|
\Delta_{\varepsilon U(\mathbf u)}^k\x
\bigr\|_p
\,\mathrm d\mathbf u  \leq \omega_p^k(\x,k\sqrt{d}\,\varepsilon) \lesssim \omega_p^k(\x,\varepsilon),
\end{equation}
where the last inequality uses \eqref{eq:moduli-real-scaling}.
To estimate $\z$, we expand $\Delta_{\varepsilon U(\mathbf u)}^k\x$ by using \eqref{eq:binomial-diff}:
\begin{align*}
\y 
&= \x + (-1)^k\sum_{j=1}^k(-1)^{k-j}\binom{k}{j}\int_{\mathbb{I}^k}T_{j\varepsilon U(\mathbf u)}(\x)\,\mathrm{d}u_1\cdots\mathrm{d}u_k,
\end{align*}
so that
\begin{equation}\label{eq:z-representation}
\z
=
(-1)^{k+1}
\sum_{\ell=1}^k
(-1)^{k-\ell}
\binom{k}{\ell}\z_\ell, \quad \text{where} \quad  \z_\ell
=
\int_{\mathbb I^k}
T_{\ell\varepsilon U(\mathbf u)}\x
\,\mathrm d\mathbf u.
\end{equation}
By \eqref{eq:in-hom-norm-equivalent}, we have
\begin{equation}\label{eq:z-Sobolev}
\|\z\|_{W_p^k}\lesssim\|\z\|_p+|\z|_{\dot{W}_p^k}.
\end{equation}
Since $T_t$ is an isometry on $L_p(\R^d_\theta)$, the representation \eqref{eq:z-representation} gives
\begin{equation}\label{eq:z-Lp}
\|\z\|_p
\leq
\sum_{\ell=1}^k
\binom{k}{\ell}\|\x\|_p
\leq
2^k\|\x\|_p.
\end{equation}

It remains to estimate the derivatives of $\z_\ell$ of order $k$. 
Fix
a multi-index
\[
m=(m_1,\ldots,m_d)\in\mathbb N_0^d,
\qquad |m|=m_1+\cdots+m_d=k,
\]
and set
\[
M_0=0,
\qquad
M_j=m_1+\cdots+m_j,
\quad 1\leq j\leq d.
\]
For each $j\in\{1,\ldots,d\}$, define
\[
R_j
:=
\{M_{j-1}+1,M_{j-1}+2,\ldots,M_j\},
\]
with the convention that $R_j=\varnothing$ if $m_j=0$. Then
$|R_j|=m_j$, and
\[
R_1\mathbin{\cup}\cdots\mathbin{\cup}R_d
=
\{1,\ldots,k\}.
\]
Write $u_r=\sum_{j=1}^d u_{r,j}\mathbf e_j$, $r=1,\ldots,k$. For each $j$, we represent the $m_j$ occurrences of
$\partial_j$ in $\partial^m$ as derivatives with respect
to the \(m_j\) distinct variables
\[
\{u_{r,j}:r\in R_j\}.
\]
Since
\(
U(\mathbf u)=u_1+\cdots+u_k,
\)
the chain rule and \eqref{eq:translation-generator-def} give, for every
$r\in\{1,\ldots,k\}$,
\[
\partial_j
T_{\ell\varepsilon U(\mathbf u)}\x
=
\frac{1}{\ell\varepsilon}
\frac{\partial}{\partial u_{r,j}}
T_{\ell\varepsilon U(\mathbf u)}\x.
\]
Since \(R_1,\ldots,R_d\) form a partition of
\(\{1,\ldots,k\}\), it follows that
\[
\partial^m\z_\ell
=
\partial_1^{m_1}\cdots\partial_d^{m_d}
\int_{\mathbb I^k}
T_{\ell\varepsilon U(\mathbf u)}\x\,\mathrm d\mathbf u
=
\big(\ell\varepsilon\big)^{-k}
\int_{\mathbb I^k}
\left(
\prod_{j=1}^d
\prod_{r\in R_j}
\frac{\partial}{\partial u_{r,j}}
\right)
T_{\ell\varepsilon U(\mathbf u)}\x
\,\mathrm d\mathbf u.
\]
 Fix $j\in\{1,\ldots,d\}$ and $r\in R_j$, and set
\[
U_{r,j}^{(0)}(\mathbf u)
:=
U(\mathbf u)-u_{r,j}\mathbf{e}_j.
\]
Since $U_{r,j}^{(0)}(\mathbf u)$ is independent of $u_{r,j}$, the
Banach-valued fundamental theorem of calculus yields
\begin{equation}\label{eq:FTC-produces-difference}
\begin{aligned}
\int_0^1
\frac{\partial}{\partial u_{r,j}}
T_{\ell\varepsilon U(\mathbf u)}\x
\,\mathrm du_{r,j}
&=
T_{\ell\varepsilon(U_{r,j}^{(0)}(\mathbf u)+\mathbf e_j)}\x
-
T_{\ell\varepsilon U_{r,j}^{(0)}(\mathbf u)}\x
\\
&=
T_{\ell\varepsilon U_{r,j}^{(0)}(\mathbf u)}
\bigl(T_{\ell\varepsilon\mathbf e_j}-I\bigr)\x
\\
&=
T_{\ell\varepsilon U_{r,j}^{(0)}(\mathbf u)}
\Delta_{\ell\varepsilon\mathbf e_j}\x.
\end{aligned}
\end{equation}
Successively applying \eqref{eq:FTC-produces-difference} in the
variables \(u_{r,j}\), \(1\leq j\leq d\) and \(r\in R_j\), we obtain 
\begin{equation}\label{eq:derivative-z-ell}
\partial^m\z_\ell
=
(\ell\varepsilon)^{-k}
\int_{[0,1)^{dk-k}}
T_{\ell\varepsilon V_m(\mathbf v)}
\left(
\Delta_{\ell\varepsilon \mathbf{e}_1}^{m_1}
\cdots
\Delta_{\ell\varepsilon \mathbf{e}_d}^{m_d}\x
\right)
\,\mathrm d\mathbf v,
\end{equation}
where
\[
\mathbf v
=
\bigl(
u_{r,j}:
1\leq j\leq d,\ 
r\in\{1,\ldots,k\}\setminus R_j
\bigr)
\in[0,1)^{dk-k}
\]
and
\[
V_m(\mathbf v)
:=
\sum_{j=1}^d
\sum_{r\in\{1,\ldots,k\}\setminus R_j}
u_{r,j}\mathbf e_j.
\]
Taking \(L_p\)-norms in \eqref{eq:derivative-z-ell} and using the
triangle inequality for Bochner integrals together with the
\(L_p\)-isometry of the translations, we obtain
\begin{align*}
\|\partial^m\z_\ell\|_p
&\leq
(\ell\varepsilon)^{-k}
\left\|
\Delta_{\ell\varepsilon e_1}^{m_1}
\cdots
\Delta_{\ell\varepsilon e_d}^{m_d}\x
\right\|_p
\lesssim
(\ell\varepsilon)^{-k}
\omega_p^k(\x,\ell\varepsilon)
\lesssim
\varepsilon^{-k}
\omega_p^k(\x,\varepsilon),
\end{align*}
where the second inequality follows from
Lemma~\ref{lem3.6}, and the last one from
\eqref{eq:moduli-integer-scaling}.
Summing over
$\ell=1,\ldots,k$ and over all $|m|=k$, we obtain
\begin{equation}\label{eq:z-homogeneous-Sobolev}
\varepsilon^k
|\z|_{\dot W_p^k}
\lesssim
\omega_p^k(\x,\varepsilon).
\end{equation}
Consequently, by \eqref{eq:z-Sobolev},
\eqref{eq:z-Lp}, and \eqref{eq:z-homogeneous-Sobolev},
\begin{align}\label{eq:es5.21}
\varepsilon^k\|\z\|_{W_p^k} \lesssim \varepsilon^k\|\x\|_p + \omega_p^k(\x,\varepsilon).
\end{align}
Together with the bound \eqref{eq:y-bound} for $\y$, this yields \eqref{eq:K-upper} for $\x\in\mcs(\R_\theta^d)$.

We now prove that \eqref{eq:K-upper} holds for arbitrary elements of \(L_p(\R_\theta^d)\).
Fix $0<\varepsilon<1$ and define the linear operators \(Y_\varepsilon\) and \(Z_\varepsilon\)
initially on \(\mcs(\R_\theta^d)\) by
\[
Y_\varepsilon\x
:=
(-1)^k
\int_{\mathbb I^k}
\Delta_{\varepsilon U(\mathbf u)}^k\x
\,\mathrm d\mathbf u,
\qquad
Z_\varepsilon\x
:=
\x-Y_\varepsilon\x.
\]
 By \eqref{eq:y-bound}, \eqref{eq:es5.21}, and
\(\omega_p^k(\x,\varepsilon)\leq2^k\|\x\|_p\), both operators extend uniquely to bounded linear operators
\[
Y_\varepsilon:
L_p(\R_\theta^d)\longrightarrow L_p(\R_\theta^d),
\qquad
Z_\varepsilon:
L_p(\R_\theta^d)\longrightarrow W_p^k(\R_\theta^d),
\]
respectively. Let \(\x\in L_p(\R_\theta^d)\), and choose
\(\x_n\in\mcs(\R_\theta^d)\) such that
\(\x_n\to\x\) in \(L_p\). Then 
\[
Y_\varepsilon\x_n\to Y_\varepsilon\x
\quad\text{in }L_p,
\qquad
Z_\varepsilon\x_n\to Z_\varepsilon\x
\quad\text{in }W_p^k,\quad \text{as} \quad n\to \infty.
\]
Consequently, the decomposition
\[
\x=Y_\varepsilon\x+Z_\varepsilon\x,
\]
is an admissible decomposition in the definition of the
\(K\)-functional.
Moreover, the estimate
\[
\left|
\omega_p^k(\x_n,\varepsilon)
-
\omega_p^k(\x,\varepsilon)
\right|
\leq
2^k\|\x_n-\x\|_p,
\]
together with \eqref{eq:y-bound} and
\eqref{eq:es5.21}, yields
\[
\begin{aligned}
K\bigl(
\x,\varepsilon^k;
L_p(\R_\theta^d),
W_p^k(\R_\theta^d)
\bigr)
&\leq
\|Y_\varepsilon\x\|_p
+
\varepsilon^k
\|Z_\varepsilon\x\|_{W_p^k}\\
&\lesssim
\varepsilon^k\|\x\|_p
+
\omega_p^k(\x,\varepsilon).
\end{aligned}
\]
This proves \eqref{eq:K-upper} for every
$\x\in L_p(\R_\theta^d)$.
\end{proof}

The preceding construction also yields the corresponding
\(K\)-functional characterization for the homogeneous Sobolev
couple \(\vec X_{\mathrm{hom}}
=
\bigl(
L_p(\R_\theta^d),
\dot W_{p,0}^k(\R_\theta^d)
\bigr)\). 

\begin{corollary}\label{cor:homogeneous-K-functional}
Let $1\leq p<\infty$ and $k\in\mathbb N$. Then, for every
$\x\in
L_p(\R_\theta^d)
+
\dot W_{p,0}^k(\R_\theta^d)$ and every $\varepsilon>0$,
\begin{align}
K\bigl(
\x,\varepsilon^k;
L_p(\R_\theta^d),
\dot W_{p,0}^k(\R_\theta^d)
\bigr)
&\approx
\omega_p^k(\x,\varepsilon).
\end{align}
The implicit constants depend only on $d$ and $k$.
\end{corollary}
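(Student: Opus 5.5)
The plan is to reuse the decomposition $\x=Y_\varepsilon\x+Z_\varepsilon\x$ from the proof of Theorem~\ref{thm:K-functional}. We keep only the homogeneous part of the Sobolev estimate, which carries no $\varepsilon^k\|\x\|_p$ term and does not need $\varepsilon<1$.

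\textbf{Step 1: defining $\omega_p^k$ on the sum.} By \eqref{eq:kth-difference-Sobolev-bound}, $\|\Delta_h^k\x\|_p\leq k!\,|h|^k|\x|_{\dot W_p^k}$ for $\x\in\mcs(\R_\theta^d)$. Hence $\Delta_h^k$ extends uniquely to a bounded operator $\dot W_{p,0}^k(\R_\theta^d)\to L_p(\R_\theta^d)$ with norm at most $k!\,|h|^k$. For $\x=\x_0+\x_1$ with $\x_0\in L_p$ and $\x_1\in\dot W_{p,0}^k$, set $\Delta_h^k\x:=\Delta_h^k\x_0+\Delta_h^k\x_1\in L_p$, and define $\omega_p^k(\x,\varepsilon)$ accordingly.

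This is well defined. If $\x_0=-\x_1$ in $\mathcal Y_k$, then by \eqref{eq:Sobolev-intersection} the element $\x_0$ lies in $W_p^k(\R_\theta^d)$ and $\x_1=-\iota_k(\x_0)$. Approximating $\x_0$ in $W_p^k$ by Schwartz elements, the two extensions of $\Delta_h^k$ agree on $W_p^k$, since both are continuous for the $W_p^k$-norm. Hence $\Delta_h^k\x_0+\Delta_h^k\x_1=0$.

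\textbf{Step 2: lower bound.} For any decomposition $\x=\x_0+\x_1$, the seminorm property and the bound from Step~1 give
\[
\omega_p^k(\x,\varepsilon)\leq 2^k\|\x_0\|_p+k!\,\varepsilon^k\|\x_1\|_{\dot W_{p,0}^k}.
\]
Taking the infimum over decompositions yields $\omega_p^k(\x,\varepsilon)\lesssim K(\x,\varepsilon^k)$.

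\textbf{Step 3: upper bound.} Fix any $\varepsilon>0$. For $\x\in\mcs(\R_\theta^d)$, the proof of Theorem~\ref{thm:K-functional} shows that
\[
\|Y_\varepsilon\x\|_p\lesssim\omega_p^k(\x,\varepsilon)
\qquad\text{and}\qquad
\varepsilon^k|Z_\varepsilon\x|_{\dot W_p^k}\lesssim\omega_p^k(\x,\varepsilon).
\]
These are \eqref{eq:y-bound} and \eqref{eq:z-homogeneous-Sobolev}. Neither derivation uses $\varepsilon<1$: that restriction was needed only for the $\|\z\|_p$ term, which is now absent.

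Both right-hand sides are bounded by $2^k\|\x\|_p$ and by $k!\,\varepsilon^k|\x|_{\dot W_p^k}$. So $Y_\varepsilon$ extends boundedly to $L_p\to L_p$ and to $\dot W_{p,0}^k\to L_p$, and $Z_\varepsilon$ extends boundedly to $L_p\to\dot W_{p,0}^k$ and to $\dot W_{p,0}^k\to\dot W_{p,0}^k$. On $W_p^k=L_p\cap\dot W_{p,0}^k$ the two extensions of each operator are consistent, by the same density argument as in Step~1. Hence $Y_\varepsilon$ and $Z_\varepsilon$ are well defined on the sum.

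Now take $\x=\x_0+\x_1$ and choose Schwartz sequences $\x_0^{(n)}\to\x_0$ in $L_p$ and $\x_1^{(n)}\to\x_1$ in $\dot W_{p,0}^k$. The identity $\x^{(n)}=Y_\varepsilon\x^{(n)}+Z_\varepsilon\x^{(n)}$ passes to the limit in $\mathcal Y_k$, because the injections $\jmath_0$ and $\jmath_1$ are continuous. The same limit gives $\|Y_\varepsilon\x\|_p+\varepsilon^k\|Z_\varepsilon\x\|_{\dot W_{p,0}^k}\lesssim\omega_p^k(\x,\varepsilon)$. This uses the continuity
\[
|\omega_p^k(\x^{(n)},\varepsilon)-\omega_p^k(\x,\varepsilon)|\leq 2^k\|\x_0^{(n)}-\x_0\|_p+k!\,\varepsilon^k\|\x_1^{(n)}-\x_1\|_{\dot W_{p,0}^k},
\]
which follows from Step~1. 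Therefore $K(\x,\varepsilon^k)\lesssim\omega_p^k(\x,\varepsilon)$.

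\textbf{Main obstacle.} The delicate point is the bookkeeping in Steps~1 and~3. The operators $\Delta_h^k$, $Y_\varepsilon$ and $Z_\varepsilon$ must be consistently defined on the abstract completion $\dot W_{p,0}^k$, and the decomposition must be valid in $\mathcal Y_k$. I would handle this entirely through \eqref{eq:Sobolev-intersection} and the continuity of $\jmath_0$ and $\jmath_1$. The analytic estimates themselves are already contained in Theorem~\ref{thm:K-functional}.
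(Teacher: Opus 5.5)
Your proof is correct. Steps~1 and~2, and Step~3 for Schwartz $\x$, follow the paper's argument: you extend $\Delta_h^k$ to $\dot W_{p,0}^k(\R_\theta^d)$, check consistency on $W_p^k(\R_\theta^d)=L_p\cap\dot W_{p,0}^k$ via \eqref{eq:Sobolev-intersection}, prove the lower bound, and invoke \eqref{eq:y-bound} and \eqref{eq:z-homogeneous-Sobolev}. Your remark that these two estimates do not need $\varepsilon<1$ makes explicit a point the paper uses tacitly.

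The real difference is the passage to general $\x$. The paper never extends the decomposition. It notes that $\mcs(\R_\theta^d)$ is dense in the sum for $K(\cdot,1;\vec X_{\mathrm{hom}})$. It then passes to the limit in $K(\x_n,\varepsilon^k;\vec X_{\mathrm{hom}})\lesssim\omega_p^k(\x_n,\varepsilon)$ using only two facts: the subadditivity of $K(\cdot,\varepsilon^k;\vec X_{\mathrm{hom}})$ and of $\omega_p^k(\cdot,\varepsilon)$, and the already-proved lower bound, which gives $\omega_p^k(\x_n-\x,\varepsilon)\lesssim K(\x_n-\x,\varepsilon^k;\vec X_{\mathrm{hom}})\to0$.

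You instead extend $Y_\varepsilon$ and $Z_\varepsilon$ separately from $L_p$ and from $\dot W_{p,0}^k$, check that the extensions agree on the intersection, and verify $\x=Y_\varepsilon\x+Z_\varepsilon\x$ in $\mathcal Y_k$ through the continuity of $\jmath_0$ and $\jmath_1$. This costs two extra consistency checks, but it yields more. You get a single pair of linear operators, bounded on the whole sum, that produces a near-optimal $K$-decomposition for every $\x$ at once. The paper's shorter route delivers only the inequality $K\lesssim\omega_p^k$.
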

\begin{proof}
By
\eqref{eq:kth-difference-Sobolev-bound}, for every
\(\z\in\mcs(\R_\theta^d)\),
\[
\|\Delta_h^k\z\|_p
\lesssim_{d,k}
|h|^k
|\z|_{\dot W_p^k(\R_\theta^d)}
=
|h|^k
\|\z\|_{\dot W_{p,0}^k(\R_\theta^d)}.
\]
Hence, for every \(h\in\R^d\), \(\Delta_h^k\) extends uniquely to a
bounded operator
\[
\widetilde\Delta_h^k:\dot W_{p,0}^k(\R_\theta^d) \longrightarrow  L_p(\R_\theta^d).
\]
We first verify that this extension agrees with the usual difference
operator on \(L_p(\R_\theta^d)\cap\dot W_{p,0}^k(\R_\theta^d)\). Indeed, if \(
\x\in
L_p(\R_\theta^d)\cap\dot W_{p,0}^k(\R_\theta^d)
\), then
\(\x\in W_p^k(\R_\theta^d)\) by
\eqref{eq:Sobolev-intersection}. Choose
\(\x_n\in\mcs(\R_\theta^d)\) such that
\(\x_n\to\x\) in \(W_p^k(\R_\theta^d)\). The estimates
\[
\|\Delta_h^k(\x_n-\x)\|_p
\leq
2^k\|\x_n-\x\|_p \longrightarrow0 \qquad (n \to \infty)
\]
and
\[
\|\widetilde\Delta_h^k(\x_n-\x)\|_p
\lesssim
|h|^k \|\x_n-\x\|_{\dot W_{p,0}^k}\longrightarrow0 \qquad (n \to \infty)
\]
show that 
\(
\widetilde\Delta_h^k\x=\Delta_h^k\x.
\)
Consequently, \(\Delta_h^k\) is well defined on \(
L_p(\R_\theta^d)+\dot W_{p,0}^k(\R_\theta^d)
\) by
\[\Delta_h^k(\y+\z)
:=
\Delta_h^k\y+\widetilde\Delta_h^k\z,
\qquad \y\in L_p(\R_\theta^d),
\quad
\z\in\dot W_{p,0}^k(\R_\theta^d).
\]
For \(\x\in L_p(\R_\theta^d)+\dot W_{p,0}^k(\R_\theta^d)\), define
\[
\omega_p^k(\x,\varepsilon)
:=
\sup_{0<|h|\leq\varepsilon}
\|\Delta_h^k\x\|_p.
\]
For every decomposition
\[
\x=\y+\z,
\qquad
\y\in L_p(\R_\theta^d),
\quad
\z\in\dot W_{p,0}^k(\R_\theta^d),
\]
we have
\begin{equation}\label{eq:homogeneous-difference-bound}
\omega_p^k(\x,\varepsilon)
\leq
\omega_p^k(\y,\varepsilon)
+
\omega_p^k(\z,\varepsilon)
\lesssim
\|\y\|_p
+
\varepsilon^k\|\z\|_{\dot W_{p,0}^k}.
\end{equation}
Taking the infimum over all such decompositions gives
\begin{equation}\label{eq:homogeneous-K-lower}
\omega_p^k(\x,\varepsilon)
\lesssim
K\bigl(
\x,\varepsilon^k;
\vec X_{\mathrm{hom}}
\bigr).
\end{equation}

For the reverse estimate, first let
\(\x\in\mcs(\R_\theta^d)\). The construction used in the proof of
Theorem~\ref{thm:K-functional} combined with Remark \ref{rem:Sobolev-intersection} provides a decomposition
\[
\x=\y+\z,
\qquad
\y\in L_p(\R_\theta^d),
\quad
\z\in\dot W_{p,0}^k(\R_\theta^d),
\]
such that, by \eqref{eq:canonical-Sobolev-embedding}, \eqref{eq:y-bound} and
\eqref{eq:z-homogeneous-Sobolev},
\[
\|\y\|_p
+
\varepsilon^k
\|\z\|_{\dot W_{p,0}^k}
\lesssim
\omega_p^k(\x,\varepsilon).
\]
Therefore,
\begin{equation}\label{eq:homogeneous-K-upper-Schwartz}
K\bigl(
\x,\varepsilon^k;
\vec X_{\mathrm{hom}}\bigr)
\lesssim
\omega_p^k(\x,\varepsilon),
\qquad
\x\in\mcs(\R_\theta^d).
\end{equation}

The space
$\mcs(\R_\theta^d)$ is dense in
\(
L_p(\R_\theta^d)+\dot W_{p,0}^k(\R_\theta^d)
\)
with respect to the sum norm
\[
\|\cdot\|_{L_p(\R_\theta^d)+\dot W_{p,0}^k(\R_\theta^d)}
:=
K(\cdot,1;\vec X_{\mathrm{hom}}).
\]
Let \(\x\in L_p(\R_\theta^d)+\dot W_{p,0}^k(\R_\theta^d)\) and choose \(\x_n\in\mcs(\R_\theta^d)\) such that
\[
K\bigl(
\x_n-\x,1;
\vec X_{\mathrm{hom}}
\bigr)
\longrightarrow0,\quad \text{as} \quad n\to \infty.
\]
For every fixed \(\varepsilon>0\),
\[
K(\x_n-\x,\varepsilon^k;\vec X_{\mathrm{hom}})
\leq
\max\{1,\varepsilon^k\}K(\x_n-\x,1;\vec X_{\mathrm{hom}})
\longrightarrow0, \qquad n\to \infty.
\]
Moreover, by \eqref{eq:homogeneous-K-lower},
\begin{align*}
\omega_p^k(\x_n-\x,\varepsilon)
\lesssim
K\bigl(
\x_n-\x,\varepsilon^k;
\vec X_{\mathrm{hom}}
\bigr)
\longrightarrow0, \qquad n\to \infty.
\end{align*}
The subadditivity of the \(K\)-functional and
\(\omega_p^k(\,\cdot\,,\varepsilon)\) therefore allows us to pass to
the limit in \eqref{eq:homogeneous-K-upper-Schwartz}, yielding
\[
K\bigl(
\x,\varepsilon^k;
\vec X_{\mathrm{hom}}
\bigr)
\lesssim
\omega_p^k(\x,\varepsilon).
\]
Together with \eqref{eq:homogeneous-K-lower}, this proves the
assertion.
\end{proof}

Inserting the preceding \(K\)-functional estimates into the
definition of the real interpolation norm gives the following Besov
norm equivalences.

\begin{corollary}\label{cor:interpolation-Besov}
Let $1\leq p<\infty$, $1\leq q\leq\infty$, $k\in\mathbb N$, and
$0<\eta<1$. Set
\(
\alpha=\eta k
\)
and
\[
c_{\eta,q}
:=
\begin{cases}
\bigl(q\eta(1-\eta)\bigr)^{-1/q},
& 1\leq q<\infty,\\[2mm]
1,
& q=\infty.
\end{cases}
\] 
Then
\begin{equation}\label{eq:inhomogeneous-interpolation-Besov}
\bigl(
L_p(\R_\theta^d),W_p^k(\R_\theta^d)
\bigr)_{\eta,q}
=
B_{p,q}^{\alpha}(\R_\theta^d)
\end{equation}
with equivalent norms. More precisely,
\begin{equation}\label{eq:inhomogeneous-interpolation-norm}
\begin{aligned}
\|\x\|_{(L_p,W_p^k)_{\eta,q}}
\approx
c_{\eta,q}\|\x\|_p
+
k^{1/q}
|\x|_{B_{p,q}^{\alpha,k}(\R_\theta^d)}
\approx_{\eta,q,k}
\|\x\|_{B_{p,q}^{\alpha,k}(\R_\theta^d)}.
\end{aligned}
\end{equation}
 Moreover, for every $\x\in L_p(\R_\theta^d)$,
\begin{equation}\label{eq:homogeneous-interpolation-norm}
\|\x\|_{
\bigl(
L_p(\R_\theta^d),\dot W_{p,0}^k(\R_\theta^d)
\bigr)_{\eta,q}
}
\approx
k^{1/q}
|\x|_{B_{p,q}^{\alpha,k}(\R_\theta^d)},
\end{equation}
where both sides may be infinite.
\end{corollary}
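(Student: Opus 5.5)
The plan is to substitute the $K$-functional estimates of Theorem~\ref{thm:K-functional} and Corollary~\ref{cor:homogeneous-K-functional} into the definition of the $(\eta,q)$-norm, and then change variables $t=\varepsilon^k$. For $q<\infty$ the $K$-functional is evaluated at $t=\varepsilon^k$, so $\mathrm dt/t=k\,\mathrm d\varepsilon/\varepsilon$ and $t^{-\eta}=\varepsilon^{-\alpha}$ with $\alpha=\eta k$. This gives
\[
\|\x\|_{\vec X_{\eta,q}}^q=k\int_0^\infty\bigl[\varepsilon^{-\alpha}K(\x,\varepsilon^k;\vec X)\bigr]^q\,\frac{\mathrm d\varepsilon}{\varepsilon}.
\]
For the homogeneous couple, Corollary~\ref{cor:homogeneous-K-functional} gives $K(\x,\varepsilon^k;\vec X_{\mathrm{hom}})\approx\omega_p^k(\x,\varepsilon)$ for every $\x\in L_p$, with constants depending only on $d,k$. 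Inserting this yields \eqref{eq:homogeneous-interpolation-norm} immediately: the factor is $k^{1/q}$, and both sides are finite or infinite together. For $q=\infty$ the same substitution applies to the supremum, and there is no factor $k$.

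For the inhomogeneous couple, Theorem~\ref{thm:K-functional} gives $K\approx\min\{1,\varepsilon^k\}\|\x\|_p+\omega_p^k(\x,\varepsilon)$. Since $(a+b)^q\approx_q a^q+b^q$, I will split the integral. The second term gives $k|\x|^q_{B^{\alpha,k}_{p,q}}$. The first term gives, after returning to the variable $t$,
\[
\|\x\|_p^q\int_0^\infty\bigl[t^{-\eta}\min\{1,t\}\bigr]^q\,\frac{\mathrm dt}{t}=\|\x\|_p^q\Bigl(\frac{1}{q(1-\eta)}+\frac1{q\eta}\Bigr)=\frac{\|\x\|_p^q}{q\eta(1-\eta)}.
\]
Taking $q$-th roots and using $(a^q+b^q)^{1/q}\approx a+b$ (the constants $2^{1/q}$ are harmless) produces $c_{\eta,q}\|\x\|_p+k^{1/q}|\x|_{B^{\alpha,k}_{p,q}}$. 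Since $c_{\eta,q}$ and $k^{1/q}$ are bounded above and below for fixed $\eta,q,k$, the last equivalence in \eqref{eq:inhomogeneous-interpolation-norm} follows. For $q=\infty$, $\sup_t t^{-\eta}\min\{1,t\}=1$.

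Two points need care. First, the first equivalence in \eqref{eq:inhomogeneous-interpolation-norm} should have constants independent of $\eta$ and $q$, which matters later for Milman's theorem. The constants from Theorem~\ref{thm:K-functional} depend only on $d,k$. Raising them to the power $q$ and taking $q$-th roots leaves them unchanged, and the splitting of $(a+b)^q$ costs at most $2$ after the $q$-th root. So the constants are uniform.

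Second, the identification \eqref{eq:inhomogeneous-interpolation-Besov} as sets. An element of $(L_p,W^k_p)_{\eta,q}$ lies in $L_p$, because $W_p^k\hookrightarrow L_p$. Such an element has finite difference norm $\|\x\|_{B^{\alpha,k}_{p,q}}$. Lemma~\ref{lem:Besov-difference-characterization} then identifies this with $B^\alpha_{p,q}$, since $k>\alpha$. For the converse I need that every $\x\in B^\alpha_{p,q}$ with $\alpha>0$ lies in $L_p$. This follows from the Littlewood--Paley sum: $\sum_j\|\varphi^{(j)}(D)\x\|_p\lesssim\|\x\|_{B^\alpha_{p,q}}$ by H\"older in $j$ (as $2^{-j\alpha}\in\ell_{q'}$), so the series converges in $L_p$ to $\x$. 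I expect this membership check, and making sure Lemma~\ref{lem:Besov-difference-characterization} is applied to genuine $L_p$ elements, to be the only subtle step. Everything else is a routine change of variables.
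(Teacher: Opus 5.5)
Your proposal is correct and follows the paper's proof essentially step for step: you substitute Theorem~\ref{thm:K-functional} and Corollary~\ref{cor:homogeneous-K-functional} into the $(\eta,q)$-norm, compute the $\min\{1,t\}\|\x\|_p$ contribution as $(q\eta(1-\eta))^{-1}\|\x\|_p^q$, change variables $t=\varepsilon^k$ to obtain $k|\x|^q_{B^{\alpha,k}_{p,q}}$, and then invoke Lemma~\ref{lem:Besov-difference-characterization} for the identification of sets. Your additional checks — that the first equivalence has constants independent of $\eta$ and $q$, and that $B^\alpha_{p,q}\subset L_p$ via absolute convergence of the Littlewood--Paley series — are correct and fill in details the paper leaves implicit.
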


\begin{proof}
By Theorem~\ref{thm:K-functional}, \[K(\x,\varepsilon;\,L_p,W_p^k)\approx\min(1,\varepsilon)\|\x\|_p+\omega_p^k(\x,\varepsilon^{1/k}),\qquad \varepsilon>0.\]
For \(1\leq q<\infty\), it follows that
\[
\|\x\|_{(L_p,W_p^k)_{\eta,q}}^q = \int_0^\infty\bigl[\varepsilon^{-\eta}K(\x,\varepsilon)\bigr]^q\,\frac{\mathrm{d}\varepsilon}{\varepsilon} \approx I_1 + I_2,
\]
where
\[
I_1 := \|\x\|_p^q\int_0^\infty\bigl[\varepsilon^{-\eta}\min(1,\varepsilon)\bigr]^q\,\frac{\mathrm{d}\varepsilon}{\varepsilon}=  \frac{1}{q\eta(1-\eta)} \|\x\|_p^q,
\]
and, upon setting \(\varepsilon=t^k\),
\[
I_2
:= \int_0^\infty\bigl[\varepsilon^{-\eta}\omega_p^k(\x,\varepsilon^{1/k})\bigr]^q\,\frac{\mathrm{d}\varepsilon}{\varepsilon}=
k\,|\x|_{B_{p,q}^{\alpha,k}(\R_\theta^d)}^q.
\]
Hence
\[
\|\x\|_{(L_p,W_p^k)_{\eta,q}}
\approx
c_{\eta,q}\|\x\|_p
+
k^{1/q}
|\x|_{B_{p,q}^{\alpha,k}(\R_\theta^d)}.
\]
The case \(q=\infty\) follows in the same way, with suprema replacing
the integrals. This proves the first equivalence in
\eqref{eq:inhomogeneous-interpolation-norm}. The second follows
directly from the definition of
\(\|\cdot\|_{B_{p,q}^{\alpha,k}(\R_\theta^d)}\), while
Lemma~\ref{lem:Besov-difference-characterization} yields
\eqref{eq:inhomogeneous-interpolation-Besov}.

Using Corollary~\ref{cor:homogeneous-K-functional} in place of
Theorem~\ref{thm:K-functional}, the same calculation with the
\(I_1\)-term omitted gives
\eqref{eq:homogeneous-interpolation-norm}.
\end{proof}

\subsection{Proofs of Theorems~\ref{th1.2} and~\ref{th1.3}}
\label{subsec:proofs-BBM-MS}

For the BBM-type limit at the upper endpoint
\(\alpha\uparrow k\), we apply
Theorem~\ref{thm:Milman}\textnormal{(i)} to
\(\vec X_{\mathrm{hom}}\).

{\textit{\textbf {Proof of Theorem~\ref{th1.2}.}}}      
We first verify the \(X_1\)-normality condition:
\begin{equation}\label{eq:normal-upper-homogeneous}
\|\z\|_{\widetilde X_1}
\approx
\|\z\|_{\dot W_{p,0}^k(\R_\theta^d)},
\qquad
\z\in\dot W_{p,0}^k(\R_\theta^d).
\end{equation}
Choose
$\z_n\in\mcs(\R_\theta^d)$ such that
\(
\z_n\longrightarrow \z\)
in \(\dot W_{p,0}^k(\R_\theta^d).
\)
By \eqref{eq:homogeneous-difference-bound}, 
\[
\sup_{\varepsilon>0}
\frac{
\bigl|
\omega_p^k(\z_n,\varepsilon)-\omega_p^k(\z,\varepsilon)
\bigr|
}{\varepsilon^k}
\lesssim
\|\z_n-\z\|_{\dot W_{p,0}^k(\R_\theta^d)}\longrightarrow 0 \qquad (n \to \infty).
\]
Applying Theorem~\ref{th3.7} to \(\z_n\) and passing to the limit
gives
\begin{equation}\label{eq:homogeneous-small-scale-endpoint}
\lim_{\varepsilon\downarrow0}
\frac{\omega_p^k(\z,\varepsilon)}{\varepsilon^k}
=
\sup_{\varepsilon>0}
\frac{\omega_p^k(\z,\varepsilon)}{\varepsilon^k}
\approx
\|\z\|_{\dot W_{p,0}^k(\R_\theta^d)}.
\end{equation}
Corollary~\ref{cor:homogeneous-K-functional} and
\eqref{eq:homogeneous-small-scale-endpoint} now yield
\[
\begin{aligned}
\|\z\|_{\widetilde X_1}
=
\lim_{s\downarrow0}
\frac{K(\z,s;\vec X_{\mathrm{hom}})}{s}
&=
\lim_{\varepsilon\downarrow0}
\frac{
K(\z,\varepsilon^k;\vec X_{\mathrm{hom}})
}{
\varepsilon^k
}\\
&\approx
\lim_{\varepsilon\downarrow0}
\frac{\omega_p^k(\z,\varepsilon)}{\varepsilon^k}
\approx
\|\z\|_{\dot W_{p,0}^k(\R_\theta^d)}.
\end{aligned}
\]
This proves \eqref{eq:normal-upper-homogeneous}.

Now let $\x\in W_p^k(\R_\theta^d)$. By
Remark~\ref{rem:Sobolev-intersection},
$\x\in L_p(\R_\theta^d)\cap \dot W_{p,0}^k(\R_\theta^d)$. Hence Theorem~\ref{thm:Milman}\textnormal{(i)},
\eqref{eq:normal-upper-homogeneous}  and  \eqref{eq:canonical-Sobolev-embedding} give
\begin{align}\label{eq:Milman-BBM-application}
\lim_{\eta\uparrow1}
\bigl(\eta(1-\eta)\bigr)^{1/q}
\|\x\|_{(\vec X_{\mathrm{hom}})_{\eta,q}}
=
q^{-1/q}\|\x\|_{\widetilde X_1}
\approx
q^{-1/q}
\|\x\|_{\dot W_{p,0}^k(\R_\theta^d)}
=
q^{-1/q}
|\x|_{\dot W_p^k(\R_\theta^d)}.
\end{align}
On the other hand, setting \(\alpha=\eta k\) in
\eqref{eq:homogeneous-interpolation-norm},
we obtain
\begin{equation}\label{eq:hom-uniform-equivalence}
\bigl(\eta(1-\eta)\bigr)^{1/q}
\|\x\|_{\bigl(
L_p(\R_\theta^d),\dot W_{p,0}^k(\R_\theta^d)
\bigr)_{\eta,q}}
\approx
\left(\frac{\alpha}{k}\right)^{1/q}
(k-\alpha)^{1/q}
|\x|_{B_{p,q}^{\alpha,k}(\R_\theta^d)},
\end{equation}
with constants independent of \(\eta\). Moreover,
\[
(k-\alpha)^{1/q}\|\x\|_{L_p(\R_\theta^d)}
\longrightarrow0
\qquad (\alpha\uparrow k),
\]
It follows from \eqref{eq:exact-BBM-modulus-limit} that
\[
\lim_{\alpha\uparrow k}
(k-\alpha)^{1/q}
|\x|_{B_{p,q}^{\alpha,k}(\R_\theta^d)}
\]
exists. Since \(\alpha/k\to1\), we may pass to the limit in
\eqref{eq:hom-uniform-equivalence} and use
\eqref{eq:Milman-BBM-application} to obtain
\[
\lim_{\alpha\uparrow k}
(k-\alpha)^{1/q}
\|\x\|_{B_{p,q}^{\alpha,k}(\R_\theta^d)}=\lim_{\alpha\uparrow k}
(k-\alpha)^{1/q}
|\x|_{B_{p,q}^{\alpha,k}(\R_\theta^d)}
\approx
q^{-1/q}
|\x|_{\dot W_p^k(\R_\theta^d)}.
\]
\vspace{0.3cm}

We next establish the MS-type limit at the lower endpoint
\(\alpha\downarrow0\). To apply
Theorem~\ref{thm:Milman}\textnormal{(ii)} to
\(\vec X_{\mathrm{hom}}\), we need to verify the
\(X_0\)-normality condition in \eqref{eq:normal}. By
Corollary~\ref{cor:homogeneous-K-functional}, this reduces to
a
large-scale estimate for the modulus of smoothness.
Its proof uses the following generalized Riemann-Lebesgue lemma.

\begin{lemma}\label{lem:matrix-RL}
Let $d_1, d_2 \in \mathbb{N}$, $G \in L^1(\mathbb{R}^{d_1})$ and let  $L : \mathbb{R}^{d_1} \to \mathbb{R}^{d_2}$ be a surjective linear map.
Then
 \[
\lim_{|h| \to \infty} \int_{\mathbb{R}^{d_1}} e^{2\pi i \langle h, \, L(\xi) \rangle_{\mathbb{R}^{d_2}}} G(\xi) \, \mathrm{d}\xi = 0, \quad h\in\mathbb R^{d_2}.
\]
\end{lemma}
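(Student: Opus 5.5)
The plan is to reduce the statement to the classical Riemann--Lebesgue lemma on $\R^{d_2}$ by pushing $G$ forward along $L$. Since $L$ is surjective, we have $d_1\geq d_2$, and we can choose a linear complement: pick a basis so that $\R^{d_1}=\ker L\oplus W$, where $L|_W:W\to\R^{d_2}$ is a linear isomorphism. Equivalently, there is an invertible linear map $A:\R^{d_1}\to\R^{d_1}$ such that $L\circ A(\eta,\zeta)=\eta$ for $(\eta,\zeta)\in\R^{d_2}\times\R^{d_1-d_2}$. To build $A$, take the right inverse $L|_W^{-1}$ on the first factor and a basis of $\ker L$ on the second. The case $d_1=d_2$ is included, with no $\zeta$ variable.

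The linear change of variables $\xi=A(\eta,\zeta)$ gives
\[
\int_{\R^{d_1}}e^{2\pi i\langle h,L(\xi)\rangle}G(\xi)\,\mathrm d\xi
=|\det A|\int_{\R^{d_2}}e^{2\pi i\langle h,\eta\rangle}g(\eta)\,\mathrm d\eta,
\]
where
\[
g(\eta):=\int_{\R^{d_1-d_2}}G\bigl(A(\eta,\zeta)\bigr)\,\mathrm d\zeta .
\]
By Fubini--Tonelli, the function $g$ is defined for almost every $\eta$ and lies in $L^1(\R^{d_2})$, with $\|g\|_{L^1}\leq|\det A|^{-1}\|G\|_{L^1}$. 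The right-hand side is therefore $|\det A|\,\widehat g(-h)$. The classical Riemann--Lebesgue lemma on $\R^{d_2}$ then gives $\widehat g(-h)\to0$ as $|h|\to\infty$, which proves the claim.

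If a self-contained argument is preferred, the final step can be replaced by density. Approximate $G$ in $L^1(\R^{d_1})$ by finite linear combinations of indicators of boxes, or by $C_c^\infty$ functions. The oscillatory integral is a bounded linear functional of $G$, uniformly in $h$, with bound $\|G\|_{L^1}$. So it suffices to treat $G\in C_c^\infty$. In that case, integration by parts in a direction $v=A(e,0)$ with $Lv=e=h/|h|$ yields decay of order $|h|^{-1}$.

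There is no genuine obstacle. The only point needing care is constructing $A$ correctly, with the Jacobian factor, and justifying the use of Fubini for the pushforward density $g$; both are routine.
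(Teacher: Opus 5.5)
Your proof is correct, but it takes a different route from the paper's. The paper does not change variables. It moves $L$ onto $h$ by transposition, $\langle h,L\xi\rangle=\langle\mathbf A^Th,\xi\rangle$ with $\mathbf A$ the matrix of $L$, so the integral equals $\widehat G(-\mathbf A^Th)$, with the Fourier transform taken on $\R^{d_1}$. Surjectivity enters only through the positive definiteness of $\mathbf A\mathbf A^T$, which gives $|\mathbf A^Th|\geq\lambda_{\min}^{1/2}|h|\to\infty$, and the Riemann--Lebesgue lemma on $\R^{d_1}$ concludes.

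You instead integrate $G$ along the cosets of $\ker L$ to produce an $L^1$ density $g$ on $\R^{d_2}$, and apply Riemann--Lebesgue there. The two arguments are dual descriptions of the same fact: the integral is the Fourier transform at $-h$ of the pushforward measure $L_*(G\,\mathrm d\xi)$. Surjectivity makes this measure absolutely continuous (your viewpoint), or equivalently makes $h\mapsto\mathbf A^Th$ injective and hence proper (the paper's viewpoint).

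The paper's version is more economical, since it needs no choice of complement, no Jacobian factor and no Fubini argument. Yours exhibits the pushforward density explicitly. Your integration-by-parts variant also gives a concrete $O(|h|^{-1})$ rate for $G\in C_c^\infty$, because $|v|\leq\|A\|$ uniformly in the direction $h/|h|$. The paper's route yields rates just as easily, though, by transferring any decay of $\widehat G$ through $|\mathbf A^Th|\gtrsim|h|$.
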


\begin{proof}
Let $\mathbf{A} \in \mathbb{R}^{d_2 \times d_1}$ be the matrix representing
\(L\).
Since 
\[
\langle h, \, L(\xi) \rangle_{\mathbb{R}^{d_2}} = \langle h, \, \mathbf{A}\xi \rangle_{\mathbb{R}^{d_2}} = \langle \mathbf{A}^T h, \, \xi \rangle_{\mathbb{R}^{d_1}},
\]
we have
\[
 \int_{\mathbb{R}^{d_1}} e^{2\pi i \langle \mathbf{A}^T h, \, \xi \rangle_{\mathbb{R}^{d_1}}} G(\xi) \, \mathrm{d}\xi = \widehat{G}\big(-\mathbf{A}^T h\big).
\]
Since \(L\) is surjective, \(\mathbf A\) has full row rank,   and hence \(\mathbf A\mathbf A^T\) is positive definite.  Let $\lambda_{\min} > 0$ denote its smallest eigenvalue. Then, for every $h \in \mathbb{R}^{d_2}$, we obtain 
\[
|\mathbf{A}^T h|^2 = \langle \mathbf{A}^T h, \, \mathbf{A}^T h \rangle_{\mathbb{R}^{d_1}} = \langle \mathbf{A}\mathbf{A}^T h, \, h \rangle_{\mathbb{R}^{d_2}} \geq \lambda_{\min} |h|^2.
\]
Thus \(|\mathbf A^Th|\to\infty\) whenever \(|h|\to\infty\).
The classical Riemann--Lebesgue lemma therefore yields
\[
\widehat G(-\mathbf A^Th)\longrightarrow0
\qquad (|h|\to\infty).
\]
\end{proof}

We now determine the large-scale behavior of the modulus of
smoothness required for the \(X_0\)-normality condition. Marchaud's
inequality reduces the problem to the first-order modulus, whose
analysis rests on the sharp asymptotic behavior of
\(\|T_h\x-\x\|_p\) as \(|h|\to\infty\). The preceding lemma will be
used to show that the mixed tracial terms arising in this limit
vanish.

\begin{lemma}
\label{lem:MS-normal-condition}
Let $1\leq p<\infty$ and $k\in\N$. Then, for every
$\x\in L_p(\R_\theta^d)$, 
\begin{equation}\label{eq:large-scale-modulus}
\lim_{\varepsilon\to\infty}
\omega_p^k(\x,\varepsilon)
\approx
\|\x\|_{L_p(\R_\theta^d)}.
\end{equation}
\end{lemma}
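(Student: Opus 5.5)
The upper bound is immediate from \eqref{eq:diff-bound}: $\omega_p^k(\x,\varepsilon)\leq 2^k\|\x\|_p$ for every $\varepsilon$. Since $\varepsilon\mapsto\omega_p^k(\x,\varepsilon)$ is nondecreasing, the limit exists. For the lower bound I will first reduce to order one. By Marchaud's inequality (Proposition~\ref{prop:Marchaud}) with $n=1<k$ (the case $k=1$ needs no reduction),
\[
\omega_p^1(\x,\varepsilon)\lesssim_k \varepsilon\int_\varepsilon^\infty \frac{\omega_p^k(\x,\delta)}{\delta}\,\frac{\mathrm d\delta}{\delta}\leq \omega_p^k(\x,\infty),
\]
where $\omega_p^k(\x,\infty):=\lim_{\delta\to\infty}\omega_p^k(\x,\delta)$. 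Hence it suffices to show that $\lim_{\varepsilon\to\infty}\omega_p^1(\x,\varepsilon)\gtrsim\|\x\|_p$. This in turn follows from the sharp large-translation asymptotic
\[
\lim_{|h|\to\infty}\|T_h\x-\x\|_p=2^{1/p}\|\x\|_p,\qquad \x\in L_p(\R_\theta^d),
\]
since $\omega_p^1(\x,\varepsilon)\geq\|T_h\x-\x\|_p$ for $|h|\leq\varepsilon$. For the lower bound alone, the weaker estimate $\liminf_{|h|\to\infty}\|T_h\x-\x\|_p\gtrsim\|\x\|_p$ would already suffice.

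Density reduces the asymptotic to $\x\in\mcs(\R_\theta^d)$. Indeed, $\bigl|\|T_h\x-\x\|_p-\|T_h\y-\y\|_p\bigr|\leq 2\|\x-\y\|_p$ uniformly in $h$. For $\x=U_\theta(f)$ and $p=2r$ with $r\in\N$, I will expand $\tau_\theta\bigl(((T_h\x-\x)^*(T_h\x-\x))^r\bigr)$ into $2^{2r}$ words whose letters are $T_h\x$, $\x$, or their adjoints. The two pure words contribute $2\|\x\|_{2r}^{2r}$, because $T_h$ is a trace-preserving automorphism. For a mixed word, \eqref{eq:translation-formula}, \eqref{eq:Utheta-product} and $\tau_\theta(U_\theta(g))=g(0)$ express the trace as an iterated $\theta$-convolution evaluated at $0$. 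This takes the form $\int_{\R^{N}}e^{2\pi i\langle h,L(u)\rangle}G(u)\,\mathrm du$ with $G\in L_1$, built from products of $f$ or $f^\sharp$ and unimodular phases. The linear form $L(u)$ is a signed sum of the frequency variables sitting in the $T_h$-letters. After eliminating the constraint that the frequencies sum to zero, $L$ is a nonzero combination of free variables, because a mixed word contains both kinds of letters. Hence $L$ is surjective onto $\R^d$, and Lemma~\ref{lem:matrix-RL} kills every mixed term. This gives the limit $2\|\x\|_{2r}^{2r}$.

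Passing from even exponents to general $p$ is the main obstacle. My plan is spectral. Let $\mu_h$ be the spectral measure of $|T_h\x-\x|^2$ against $\tau_\theta$, weighted by $\lambda$, so that it becomes a finite measure on $[0,M]$ with $M$ uniformly bounded since $\x\in L_\infty$. Similarly let $\nu$ be the weighted spectral measure of $|\x|^2$, and take the target $\mu=2\nu$ (pure terms). The even-moment limits give convergence of all polynomial moments $\int\lambda^{j}\,\mathrm d\mu_h\to\int\lambda^j\,\mathrm d\mu$. By Weierstrass approximation on $[0,M]$ this yields weak convergence, and integrating $\lambda^{p/2-1}$ handles $p\geq2$.

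For $1\leq p<2$ the weight $\lambda^{p/2-1}$ is singular at $0$. Here I will instead use weights $\lambda^{p/2}$, then weak compactness and the Portmanteau theorem with lower semicontinuity. This should give $\liminf\|T_h\x\mp\x\|_p^p\geq 2\|\x\|_p^p$, which is enough for the lemma. The matching upper bound for the sharp constant, needed only for the full asymptotic, comes from the noncommutative Clarkson inequality applied to $T_h\x\pm\x$ when $1<p<2$, and from the triangle inequality when $p=1$. Combined with the Marchaud reduction, this yields \eqref{eq:large-scale-modulus}.
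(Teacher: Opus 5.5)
Your proposal is correct and follows essentially the same route as the paper: a Marchaud reduction to the first-order modulus, the even-exponent tracial expansion with non-pure words killed by Lemma~\ref{lem:matrix-RL}, weighted spectral measures with Weierstrass approximation for $p\geq2$, Portmanteau plus the noncommutative Clarkson inequality for $1\leq p<2$, and a final density step. Your two deviations are harmless simplifications: you bound the Marchaud average directly by $\omega_p^k(\x,\infty)$ instead of computing its limit, and you treat the paper's ``mixed diagonal'' words together with the mixed words at the level of single letters.
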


\begin{proof}
Set\[
\omega_p^k(\x,\infty)
:=
\lim_{\varepsilon\to\infty}
\omega_p^k(\x,\varepsilon).
\]
Since
\(\varepsilon\mapsto\omega_p^k(\x,\varepsilon)\) is nondecreasing
and bounded above by \(2^k\|\x\|_p\), the limit
\(\omega_p^k(\x,\infty)\) is well defined and finite.

\textit{Step 1.} \textit{Reduction to the first-order modulus of smoothness.}
Suppose that $k\geq2$. Marchaud's inequality
(Proposition~\ref{prop:Marchaud}) with \(n=1\) gives
\begin{equation}\label{eq:Marchaud-large-scale}
2^{1-k}\omega_p^k(\x,\varepsilon)
\leq
\omega_p^1(\x,\varepsilon)
\lesssim
\varepsilon
\int_\varepsilon^\infty
\frac{\omega_p^k(\x,\delta)}{\delta^2}
\,\mathrm d\delta,
\qquad \varepsilon>0.
\end{equation}
We claim that
\begin{equation}\label{eq:Marchaud-average-limit}
\lim_{\varepsilon\to\infty}
\varepsilon
\int_\varepsilon^\infty
\frac{\omega_p^k(\x,\delta)}{\delta^2}
\,\mathrm d\delta
=
\omega_p^k(\x,\infty).
\end{equation}
Indeed, 
\[
\begin{aligned}
\left|
\varepsilon
\int_\varepsilon^\infty
\frac{\omega_p^k(\x,\delta)}{\delta^2}
\,\mathrm d\delta
-
\omega_p^k(\x,\infty)
\right|
\leq
\varepsilon
\int_\varepsilon^\infty
\frac{
\bigl|
\omega_p^k(\x,\delta)-\omega_p^k(\x,\infty)
\bigr|
}{\delta^2}
\,\mathrm d\delta.
\end{aligned}
\]
Given \(\eta>0\), choose $R>0$ such that
\[
\bigl|
\omega_p^k(\x,\delta)-\omega_p^k(\x,\infty)
\bigr|
<\eta,
\qquad
\delta\geq R.
\]
Then, for $\varepsilon\geq R$,
\[
\begin{aligned}
\left|
\varepsilon
\int_\varepsilon^\infty
\frac{\omega_p^k(\x,\delta)}{\delta^2}
\,\mathrm d\delta
-
\omega_p^k(\x,\infty)
\right|
\leq
\eta\;\varepsilon
\int_\varepsilon^\infty
\frac{\mathrm d\delta}{\delta^2}
=
\eta.
\end{aligned}
\]
This proves \eqref{eq:Marchaud-average-limit}.

Letting $\varepsilon\to\infty$ in
\eqref{eq:Marchaud-large-scale} and using
\eqref{eq:Marchaud-average-limit}, we obtain
\[
2^{1-k}\omega_p^k(\x,\infty)
\leq
\omega_p^1(\x,\infty)
\lesssim
\omega_p^k(\x,\infty),
\]
Consequently,
\[
\omega_p^k(\x,\infty)
\approx
\omega_p^1(\x,\infty),
\qquad k\in\N,
\]
and it therefore remains to prove that
\[
\omega_p^1(\x,\infty)
\approx
\|\x\|_{L_p(\R_\theta^d)}.
\]
The upper bound follows immediately from
\(
\omega_p^1(\x,\varepsilon)
\leq
2\|\x\|_p.
\)
It therefore remains to establish the reverse estimate.

\medskip
\textit{Step 2. The lower bound for even exponents.}
Let $m\in\N$ and
\[
\x=U_\theta(f)\in\mcs(\R_\theta^d),
\qquad
f\in\mcs(\R^d).
\]
We shall prove 
\begin{equation}\label{eq:even-exponent-translation-limit}
\lim_{|h|\to\infty}
\|\Delta_h\x\|_{2m}^{2m}
=
2\|\x\|_{2m}^{2m}.
\end{equation}

Set
\(
a=T_h\x\) and \(b=\x\), we have
\[
\|\Delta_h\x\|_{2m}^{2m}
=
\tau_\theta\Bigl(\big((a-b)^*(a-b)\big)^m\Bigr),
\quad
(a-b)^*(a-b)
=
a^*a+b^*b-a^*b-b^*a.
\]
Expanding the \(m\)-fold product distributively yields \(4^m\) terms
of the form
\[
(-1)^j\tau_\theta(W_1\cdots W_m),
\qquad
W_i\in
\{a^*a,b^*b,a^*b,b^*a\},
\]
where $j$ counts the factors belonging to
$\{a^*b,b^*a\}$.

We classify these $4^m$ terms into three types:
\begin{itemize}
\item \textbf{Pure diagonal:} all $W_i=a^*a$, or all $W_i=b^*b$.
\item \textbf{Mixed diagonal:} every $W_i\in\{a^*a,b^*b\}$, but both appear.
\item \textbf{Mixed:} at least one $W_i\in\{a^*b,b^*a\}$.
\end{itemize}
We show that only the two pure diagonal terms survive as $|h|\to\infty$.

Recall that
\begin{gather*}
U_\theta(g_1)\cdot U_\theta(g_2)=U_\theta(g_1*_\theta g_2),\quad
\tau_\theta(U_\theta(g))=g(0),\\
U_\theta(g)^*=U_\theta(g^\sharp)\;\text{ where }\;
g^\sharp(s):=\overline{g(-s)}.
\end{gather*}
For operators $V_1,\ldots,V_n$ with $V_j=U_\theta(g_j)$, we have $\tau_\theta(V_1\cdots V_n)=(g_1*_\theta\cdots*_\theta g_n)(0)$. 
To evaluate this, we expand the iterated $\theta$-convolution using the definition \eqref{eq:theta-conv} recursively. 
Denote by $G_1=g_1$, $G_k=G_{k-1}*_\theta g_k$ for $k=2,3,\ldots,n$. Introducing integration variables $u_1,\ldots,u_{n-1}\in\R^d$ at each step:
\begin{align*}
G_n(0)&=\int_{\R^d}G_{n-1}(-u_{n-1})\,g_n(u_{n-1})\,\mathrm{d}u_{n-1},\\
G_{n-1}(-u_{n-1})&=\int_{\R^d}e^{\frac{i}{2}\langle u_{n-1},\,\theta u_{n-2}\rangle}
  \,G_{n-2}(-u_{n-1}-u_{n-2})\\
  &\qquad\qquad\cdot g_{n-1}(u_{n-2})\,\mathrm{d}u_{n-2},\\
G_{n-2}(-u_{n-1}-u_{n-2})&=\int_{\R^d}
  e^{\frac{i}{2}\langle u_{n-1}+u_{n-2},\,\theta u_{n-3}\rangle}\\
  &\qquad\qquad\cdot G_{n-3}(-u_{n-1}-u_{n-2}-u_{n-3})\,g_{n-2}(u_{n-3})\,\mathrm{d}u_{n-3},
\end{align*}
and in general, for $k=n-1,n-2,\ldots,2$:
\[
G_k\Bigl(-\sum_{i=k}^{n-1}u_i\Bigr)
=\int_{\R^d}e^{\frac{i}{2}\bigl\langle\sum_{i=k}^{n-1}u_i,\,\theta u_{k-1}\bigr\rangle}
\,G_{k-1}\Bigl(-\sum_{i=k-1}^{n-1}u_i\Bigr)\,g_k(u_{k-1})\,\mathrm{d}u_{k-1}.
\]
Iterating these identities yield the $(n-1)d$-dimensional integral:
\begin{equation}\label{eq:iterated_explicit}
(g_1*_\theta\cdots*_\theta g_n)(0)
=\int_{\R^{(n-1)d}}\Theta_\theta(\mathbf{u})\,
g_1\Bigl(-\sum_{k=1}^{n-1}u_k\Bigr)\,
\prod_{j=2}^{n}g_j(u_{j-1})\,\mathrm{d}\mathbf{u},
\end{equation}
where $\mathbf{u}=(u_1,\ldots,u_{n-1})\in\R^{(n-1)d}$ and 
\[
\Theta_\theta(\mathbf{u})
=\prod_{k=1}^{n-2}
e^{\frac{i}{2}\bigl\langle u_{k+1}+\cdots+u_{n-1},\,\theta u_k\bigr\rangle}.
\]
For later use, define the linear maps
$\ell_j:\R^{(n-1)d}\to\R^d$, \(j=1,\ldots,n\), by
\begin{equation}\label{eq:ell}
\ell_1(\mathbf{u})=-(u_1+\cdots+u_{n-1}),\qquad
\ell_j(\mathbf{u})=u_{j-1}\;\text{ for }j=2,\ldots,n.
\end{equation}
Then \eqref{eq:iterated_explicit} can be written as
\begin{equation}\label{eq:iterated_ell}
(g_1*_\theta\cdots*_\theta g_n)(0)
=\int_{\R^{(n-1)d}}\Theta_\theta(\mathbf{u})\,
\prod_{j=1}^{n}g_j(\ell_j(\mathbf{u}))\,\mathrm{d}\mathbf{u}.
\end{equation}
The phase $\Theta_\theta(\mathbf{u})$ is bounded ($|\Theta_\theta|=1$) and
independent of $h$.

\medskip
\textit{Mixed terms vanish.}
For arbitrary $m\in \N$ and any choice of $W_1,\ldots,W_m$ containing at least one
factor from $\{a^*b,b^*a\}$, we show
$\tau_\theta(W_1\cdots W_m)\to 0$ as $|h|\to\infty$.

Write $W_1\cdots W_m=X_1 X_2\cdots X_{2m}$ with $X_j\in\{a,a^*,b,b^*\}$.
Each elementary factor can be written as
\begin{align}\label{eq:respent}
a=U_\theta(f_h),\quad a^*=U_\theta(f_h^\sharp),\quad
b=U_\theta(f),\quad b^*=U_\theta(f^\sharp),
\end{align}
where $f_h(s)=e^{2\pi i\langle h,s\rangle}f(s)$ and
$f_h^\sharp(s)=e^{2\pi i\langle h,s\rangle}f^\sharp(s)$. Applying \eqref{eq:iterated_ell} with \(n=2m\) to these factors,
we obtain
\[
\tau_\theta(W_1\cdots W_{m})
=\int_{\R^{(2m-1)d}}
\Theta_\theta(\mathbf{u})\,\prod_{j=1}^{2m} g_j(\ell_j(\mathbf u))\,\mathrm{d}\mathbf{u},
\]
where
\[
g_j(s)
=
e^{2\pi i\chi_{\mathcal A}(j)\langle h,s\rangle}
g_j^{(0)}(s),
\qquad
g_j^{(0)}\in\{f,f^\sharp\}.
\]
Then
\[
\prod_{j=1}^{2m} g_j(\ell_j(\mathbf u))
=
e^{2\pi i\langle h,L(\mathbf u)\rangle}
\prod_{j=1}^{2m} g_j^{(0)}(\ell_j(\mathbf u)),
\]
where $L(\mathbf{u})=\sum_{j\in\mathcal{A}}\ell_j(\mathbf{u})$ with
$\mathcal{A}=\{j:X_j\in\{a,a^*\}\}$.
Moreover,
\[
\begin{aligned}
\int_{\R^{(2m-1)d}}
\prod_{j=1}^{2m}
\bigl|
g_j^{(0)}(\ell_j(\mathbf u))
\bigr|
\,\mathrm d\mathbf u
\leq
\|g_1^{(0)}\|_\infty
\prod_{j=2}^{2m}
\|g_j^{(0)}\|_1<\infty.
\end{aligned}
\]
Since $|\Theta_\theta|=1$, the function
\(
F(\mathbf u):=\Theta_\theta(\mathbf u)
\prod_{j=1}^{2m}
g_j^{(0)}(\ell_j(\mathbf u))\in L_1(\R^{(2m-1)d})\,
\)
is independent of $h$ and 
\[
\tau_\theta(W_1\cdots W_{m})
=\int_{\R^{(2m-1)d}}
e^{2\pi i\langle h,L(\mathbf u)\rangle}\,F(\mathbf u)\,\mathrm{d}\mathbf{u},
\]

Taking \(n=2m\) in \eqref{eq:ell}, we obtain
\[
L(\mathbf u)=\sum_{r=1}^{2m-1}c_r u_r,
\qquad
c_r=-\chi_{\mathcal A}(1)+\chi_{\mathcal A}(r+1),
\quad r=1,\ldots,2m-1,
\]
where $\chi_{\mathcal A}$ denotes the indicator function of $\mathcal A\subset\{1,\ldots,2m\}$. 
Regarding
\(\mathbf u=(u_1,\ldots,u_{2m-1})\) as a block column vector in
\(\R^{(2m-1)d}\), write
\[
L(\mathbf u)=\mathbf A_{\mathcal A}\mathbf u,
\qquad
\mathbf A_{\mathcal A}
=
\begin{pmatrix}
c_1I_d & c_2I_d & \cdots & c_{2m-1}I_d
\end{pmatrix}
\in\R^{d\times(2m-1)d},
\]
where \(I_d\) denotes the \(d\times d\) identity matrix.
Since the term under consideration contains at least one factor $a^*b$ or $b^*a$, the set $\mathcal A$ is neither empty nor
$\{1,\ldots,2m\}$. Hence \(c_r\neq0\) for some \(r\), and
\[
\mathbf A_{\mathcal A}\mathbf A_{\mathcal A}^T
=
\left(\sum_{r=1}^{2m-1}c_r^2\right)I_d
\]
is positive definite. Consequently,
\(\operatorname{rank}(\mathbf A_{\mathcal A})=d\), and \(L\) is
surjective.  Applying Lemma~\ref{lem:matrix-RL}, we obtain
\[
\tau_\theta(W_1\cdots W_m)\to 0
\qquad\text{as } \quad |h|\to\infty .
\]

\medskip
\textit{Mixed diagonal terms vanish.}
A mixed diagonal term has
\(W_i\in\{a^*a,b^*b\}\) for every \(i\) and contains at least one
factor of each type.  Let $W_i=U_{\theta}(g_{W_i})$. By \eqref{eq:respent} and \eqref{eq:theta-conv},
\[
g_{a^*a}(s)=e^{2\pi i\langle h,s\rangle}\varphi(s)
\quad\text{and}\quad
 g_{b^*b}(s)=\varphi(s),
 \]
where $\varphi=f^\sharp*_\theta f\in\mcs(\R^d)$. Applying \eqref{eq:iterated_ell} with \(n=m\) and
\(g_j=g_{W_j}\), we obtain 
\[
\tau_\theta(W_1\cdots W_m)
=\int_{\R^{(m-1)d}}e^{2\pi i\langle h,\,L'(\mathbf{u})\rangle}\,
\Theta_\theta(\mathbf{u})\,
\prod_{j=1}^{m}\varphi(\ell_j(\mathbf{u}))\,\mathrm{d}\mathbf{u},
\]
where $L'(\mathbf{u})=\sum_{j\in\mathcal{A}'}\ell_j(\mathbf{u})$ with
$\mathcal{A}'=\{j:W_j=a^*a\}$. 
Since
\[
\int_{\R^{(m-1)d}}
\prod_{j=1}^m
\bigl|\varphi(\ell_j(\mathbf u))\bigr|
\,\mathrm d\mathbf u
\leq
\|\varphi\|_\infty
\|\varphi\|_1^{m-1}
<\infty
\]
and \(|\Theta_\theta|=1\), the function
\[
\mathbf u\longmapsto
\Theta_\theta(\mathbf u)
\prod_{j=1}^{m}\varphi(\ell_j(\mathbf u))
\]
belongs to \(L_1(\R^{(m-1)d})\). Moreover, since the term is mixed
diagonal,
\[
\varnothing
\subsetneq
\mathcal A'
\subsetneq
\{1,\ldots,m\}.
\]
The coefficient argument used above therefore shows that \(L'\) is
surjective. Lemma~\ref{lem:matrix-RL} now yields
\[
\tau_\theta(W_1\cdots W_m)\longrightarrow0
\qquad (|h|\to\infty).
\]

\medskip
\textit{Pure diagonal terms.}
For the two pure diagonal terms, since $T_h$ is a trace-preserving
$*$-automorphism,
\[
\begin{aligned}
\tau_\theta((a^*a)^m)
&=
\tau_\theta\!\left(
T_h\bigl((\x^*\x)^m\bigr)
\right)
=
\tau_\theta((\x^*\x)^m)
=
\|\x\|_{2m}^{2m},\\
\tau_\theta((b^*b)^m)
&=
\tau_\theta((\x^*\x)^m)
=
\|\x\|_{2m}^{2m}.
\end{aligned}
\]
Therefore
\[
\lim_{|h|\to\infty}\|\Delta_h\x\|_{2m}^{2m}
=\tau_\theta((a^*a)^m)+\tau_\theta((b^*b)^m)=2\,\|\x\|_{2m}^{2m}.
\]

\medskip
\textit{Step 3.} \textit{Extension to  $2\le p<\infty$.}
Since $\x\in\mcs(\R^d_\theta)\subset L_\infty(\R^d_\theta)$ and \(
\|\Delta_h\x\|_\infty
\leq
2\|\x\|_\infty
\), we have
\[
0\le|\Delta_h\x|^2\le R\cdot\mathbf{1}, \qquad  0\le|\x|^2\le R\cdot\mathbf{1},\qquad R:=4\|\x\|_\infty^2.
\]
Consequently, both operators have spectra contained in the common
compact interval $[0,R]$.
Let \(E_h\) and \(E\) denote the projection-valued spectral measures
of \(|\Delta_h\x|^2\) and \(|\x|^2\), respectively, and write
\(\mathcal B([0,R])\) for the Borel \(\sigma\)-algebra of \([0,R]\). Since
\(\tau_\theta\) is normal,
\begin{align}\label{eq:spectral-measure}
\mu_h(B):=\tau_\theta(E_h(B)),
\qquad
\mu(B):=\tau_\theta(E(B)),
\qquad
B\in\mathcal B([0,R])
\end{align}
define positive Borel measures on \([0,R]\), not necessarily finite.
By the spectral integral formula
\cite[Remark~1.1.6]{PX03}, for every bounded nonnegative Borel
function \(\varphi\) on \([0,R]\),
\[
\tau_\theta(\varphi(|\Delta_h\x|^2))
=
\int_0^R\varphi(\lambda)\,\mathrm d\mu_h(\lambda),
\qquad
\tau_\theta(\varphi(|\x|^2))
=
\int_0^R\varphi(\lambda)\,\mathrm d\mu(\lambda).
\]

Since
\[
E_h([0,R])=E([0,R])=\mathbf1
\qquad\text{and}\qquad
\tau_\theta(\mathbf1)=\infty,
\]
both \(\mu_h\) and \(\mu\) have infinite total mass. We therefore
consider the weighted measures
\[
\nu_h(B)
:=
\int_B\lambda\,\mathrm d\mu_h(\lambda),
\qquad
\nu(B)
:=
\int_B\lambda\,\mathrm d\mu(\lambda),
\qquad
B\in\mathcal B([0,R]).
\]
The spectral functional calculus and the spectral integration formula
give
\[
\nu_h(B)
=
\tau_\theta\bigl(|\Delta_h\x|^2 E_h(B)\bigr),
\qquad
\nu(B)
=
\tau_\theta\bigl(|\x|^2E(B)\bigr).
\]
In particular, \(\nu_h\) and \(\nu\) are finite, and
\[\nu_h([0,R])
=\tau_\theta(|\Delta_h\x|^2)=\|\Delta_h\x\|_2^2\le 4\|\x\|_2^2, \qquad
\nu([0,R])
=
\tau_\theta(|\x|^2)
=
\|\x\|_2^2.\]

Step~2 gives, for every integer $n\ge 1$,
\begin{equation}\label{eq:moments}
\int_0^R\lambda^{n-1}\,\mathrm{d}\nu_h(\lambda)
=\|\Delta_h\x\|_{2n}^{2n}
\longrightarrow 2\,\|\x\|_{2n}^{2n}
=2\int_0^R\lambda^{n-1}\,\mathrm{d}\nu(\lambda),\qquad |h|\to\infty.
\end{equation}
By linearity, \eqref{eq:moments} implies that
\[
\int_0^R P(\lambda)\,\mathrm d\nu_h(\lambda)
\longrightarrow
2\int_0^R P(\lambda)\,\mathrm d\nu(\lambda)
\qquad (|h|\to\infty)
\]
for every polynomial \(P\). Since
\[
\sup_h\nu_h([0,R])<\infty,
\]
the Weierstrass approximation theorem therefore extends the
convergence from polynomials to all functions in \(C([0,R])\).
Thus, for every \(\psi\in C([0,R])\),
\begin{equation}\label{eq:weak}
\int_0^R\psi(\lambda)\,\mathrm d\nu_h(\lambda)
\longrightarrow
2\int_0^R\psi(\lambda)\,\mathrm d\nu(\lambda)
\qquad (|h|\to\infty).
\end{equation}
Applying \eqref{eq:weak} to $\psi(\lambda)=\lambda^{p/2-1}$ with \(\psi\equiv1\) when \(p=2\),
we obtain
\[
\|\Delta_h\x\|_p^p
=\int_0^R\lambda^{p/2}\,\mathrm{d}\mu_h(\lambda)
=\int_0^R\lambda^{p/2-1}\,\mathrm{d}\nu_h(\lambda)
\longrightarrow 2\int_0^R\lambda^{p/2-1}\,\mathrm{d}\nu(\lambda)
=2\,\|\x\|_p^p.
\]
Therefore, for every $2\le p <\infty$ and $\x\in\mcs(\R^d_\theta)$,
\begin{equation}\label{eq:lower_all}
\lim_{|h|\to\infty}\|\Delta_h\x\|_p^p=2\,\|\x\|_p^p.
\end{equation}

\medskip
\textit{Step 4.} \textit{Extension to $1\leq p<2$.}
The arguments of Steps~2--3 also apply to
\[
S_h\x:=T_h\x+\x.
\]
Indeed, the expansion of
\(((a+b)^*(a+b))^m\), where \(a=T_h\x\) and \(b=\x\), has the same
classification as before, with only the signs of the non-pure terms
changed. Lemma~\ref{lem:matrix-RL} again shows that all mixed and
mixed diagonal terms vanish as \(|h|\to\infty\), while the two pure
diagonal terms contribute \(2\|\x\|_{2m}^{2m}\). Repeating the
spectral-measure argument of Step~3 therefore gives
\[
\lim_{|h|\to\infty}
\|S_h\x\|_p^p
=
2\|\x\|_p^p,
\qquad
2\leq p<\infty.
\]

Fix \(1\leq p<2\). We first establish the required lower bound.
With \(\mu_h\) and \(\mu\) as in
\eqref{eq:spectral-measure}, define positive Borel measures on
\([0,R]\) by
\[
\begin{aligned}
\rho_h(B)
&:=
\int_B\lambda^{p/2}\,\mathrm d\mu_h(\lambda)
=
\tau_\theta\bigl(|\Delta_h\x|^pE_h(B)\bigr),\\
\rho(B)
&:=
\int_B\lambda^{p/2}\,\mathrm d\mu(\lambda)
=
\tau_\theta\bigl(|\x|^pE(B)\bigr),
\qquad
B\in\mathcal B([0,R]).
\end{aligned}
\]
These measures are finite, since
\[
\rho_h([0,R])
=
\|\Delta_h\x\|_p^p
\leq
2^p\|\x\|_p^p,
\qquad
\rho([0,R])
=
\|\x\|_p^p.
\]
Moreover, the weight \(\lambda^{p/2}\) vanishes at zero, and hence
\[
\rho_h(\{0\})=\rho(\{0\})=0.
\]
For every integer \(n\geq1\), we have \(2n+p>2\) and therefore,
\eqref{eq:lower_all} gives
\begin{equation}\label{eq:moment-converge-2}
\int_0^R\lambda^n\,\mathrm{d}\rho_h(\lambda)
=\|\Delta_h\x\|_{2n+p}^{2n+p}\longrightarrow 2\,\|\x\|_{2n+p}^{2n+p}=
2\int_0^R\lambda^n\,\mathrm d\rho(\lambda),
\end{equation}
Choose a sequence \((h_j)\) with \(|h_j|\to\infty\) such that
\[
\rho_{h_j}([0,R])
\longrightarrow
\liminf_{|h|\to\infty}\rho_h([0,R]).
\]
Since
\[
\sup_j\rho_{h_j}([0,R])
\leq
2^p\|\x\|_p^p<
\infty,
\]
the sequence \((\rho_{h_j})\) is bounded in
\(C([0,R])^*\). By the Banach--Alaoglu theorem and the separability
of \(C([0,R])\), after passing to a subsequence we may assume that
\[
\rho_{h_j}\rightharpoonup\gamma
\]
for some finite positive Borel measure \(\gamma\) on \([0,R]\).  Combining \eqref{eq:moment-converge-2} with the weak convergence of
\(\rho_{h_j}\), we obtain
\[
\int_0^R P(\lambda)\,\mathrm d\gamma(\lambda)
=
2\int_0^R P(\lambda)\,\mathrm d\rho(\lambda)
\]
for every polynomial \(P\) satisfying \(P(0)=0\). Such polynomials
are dense in
\(
\bigl\{
f\in C([0,R]):f(0)=0
\bigr\},
\)
and hence
\[
\int_0^R f(\lambda)\,\mathrm d\gamma(\lambda)
=
2\int_0^R f(\lambda)\,\mathrm d\rho(\lambda)
\]
for every \(f\in C([0,R])\) with \(f(0)=0\).

For \(N\in\N\), let
\(
f_N(\lambda):=\min\{N\lambda,1\}.
\)
Since \(f_N\uparrow \chi_{(0,R]}\), the monotone convergence
theorem gives
\[
\gamma((0,R])
=
2\rho((0,R])
=
2\|\x\|_p^p.
\]
Since the set \((0,R]\) is open relative to \([0,R]\) and $\rho_{h_j}(\{0\})=0$,  
the Portmanteau
theorem (see \cite[Corollary~8.2.4]{Bog07}) yields
\[
\liminf_{j\to\infty}\rho_{h_j}([0,R])
=\liminf_{j\to\infty}\rho_{h_j}((0,R])
\ge\gamma((0,R])=2\,\|\x\|_p^p.
\]
By the choice of \((h_j)\), we conclude that
\begin{equation}\label{eq:liminf_p}
\liminf_{|h|\to\infty}\|\Delta_h\x\|_p^p\ge 2\,\|\x\|_p^p.
\end{equation}
Repeating the argument with \(S_h\x\) in place of
\(\Delta_h\x\) gives
\begin{equation}\label{eq:liminf_S}
\liminf_{|h|\to\infty}\|S_h\x\|_p^p\ge 2\,\|\x\|_p^p.
\end{equation}

It remains to establish the matching upper bounds. Suppose first that \(1<p<2\), and let
\(p'=p/(p-1)\). The noncommutative Clarkson inequality \cite[Theorem~5.1\,(i)]{PX03} states that
\[
\|u+v\|_p^{p'}+\|u-v\|_p^{p'}
\leq
2\bigl(\|u\|_p^p+\|v\|_p^p\bigr)^{p'/p},
\quad u,v\in L_p(\mathcal{M}).
\]
Applying this inequality with \(u=T_h\x\) and \(v=\x\), we obtain
\[
\|S_h\x\|_p^{p'}+\|\Delta_h\x\|_p^{p'}
\le 2(2\|\x\|_p^p)^{p'/p}=2^{1+p'/p}\,\|\x\|_p^{p'}.
\]
Since \(t\mapsto t^{p'/p}\) is increasing on \([0,\infty)\),
\eqref{eq:liminf_p} and \eqref{eq:liminf_S} imply
\[
\begin{aligned}
\liminf_{|h|\to\infty}
\|\Delta_h\x\|_p^{p'}
&\geq
2^{p'/p}\|\x\|_p^{p'},\qquad
\liminf_{|h|\to\infty}
\|S_h\x\|_p^{p'}
&\geq
2^{p'/p}\|\x\|_p^{p'}.
\end{aligned}
\]
Consequently,
\[
\limsup_{|h|\to\infty}\|\Delta_h\x\|_p^{p'}
\le 2^{1+p'/p}\,\|\x\|_p^{p'}-\liminf_{|h|\to\infty}\|S_h\x\|_p^{p'}
\le 2^{p'/p}\,\|\x\|_p^{p'},\quad 1<p<2.
\]
For \(p=1\), the triangle inequality gives
\[
\limsup_{|h|\to\infty}
\|\Delta_h\x\|_1
\leq
2\|\x\|_1.
\]
Combining these upper bounds with \eqref{eq:liminf_p}, we obtain
\[
\lim_{|h|\to\infty}
\|\Delta_h\x\|_p^p
=
2\|\x\|_p^p,
\qquad
1\leq p<2.
\]
Together with Step~3, this proves \eqref{eq:lower_all} for every
\(1\leq p<\infty\).

\medskip
\textit{Step 5.} \textit{Completion of the proof.}
Steps~2--4 show that
\[
\lim_{|h|\to\infty}
\|\Delta_h\x\|_p
=
2^{1/p}\|\x\|_p,
\qquad
\x\in\mcs(\R_\theta^d),\quad 1\leq p<\infty.
\]
We now extend this identity to arbitrary
\(\x\in L_p(\R_\theta^d)\). Choose
\(\x_n\in\mcs(\R_\theta^d)\) such that
\(\x_n\to\x\) in \(L_p(\R_\theta^d)\). Since \(T_h\) is an
\(L_p\)-isometry,
\[
\bigl|
\|\Delta_h\x\|_p-\|\Delta_h\x_n\|_p
\bigr|
\leq
\|\Delta_h(\x-\x_n)\|_p
\leq
2\|\x-\x_n\|_p
\]
uniformly in \(h\).
Applying the preceding identity to \(\x_n\) gives
\[
\begin{aligned}
\limsup_{|h|\to\infty}
\left|
\|\Delta_h\x\|_p
-
2^{1/p}\|\x\|_p
\right|
&\leq
2\|\x-\x_n\|_p
+
2^{1/p}
\bigl|
\|\x_n\|_p-\|\x\|_p
\bigr|\\
&\leq
\bigl(2+2^{1/p}\bigr)
\|\x-\x_n\|_p.
\end{aligned}
\]
Letting \(n\to\infty\), we obtain
\begin{equation}\label{eq:sharp-translation-asymptotic}
\lim_{|h|\to\infty}
\|\Delta_h\x\|_{L_p(\R_\theta^d)}
=
2^{1/p}\|\x\|_{L_p(\R_\theta^d)},
\qquad
\x\in L_p(\R_\theta^d),\quad 1\leq p<\infty.
\end{equation}

By the definition of the modulus of smoothness and
\eqref{eq:sharp-translation-asymptotic},
\[
\omega_p^1(\x,\infty)
\geq
\lim_{|h|\to\infty}\|\Delta_h\x\|_p
=
2^{1/p}\|\x\|_p.
\]
Combining this with the upper bound established at the beginning of
the proof yields
\[
\omega_p^1(\x,\infty)
\approx
\|\x\|_p.
\]
The reduction in Step~1 now completes the proof.
\end{proof}

\begin{remark}\label{rem:sharp-asymptotic}
The exact large-translation limit
\eqref{eq:sharp-translation-asymptotic} has a classical counterpart
when \(\theta=0\). It follows for compactly supported functions from
the disjointness of sufficiently distant translates and then extends
to \(L_p(\R^d)\) by density. This support-separation argument has no direct analogue on
\(\R_\theta^d\), since general noncommutative \(L_p\)-elements are
operators rather than functions with spatial supports, and large
translations do not in general produce orthogonal support
projections.
In its place, the proof above uses the
Riemann--Lebesgue decay of the mixed tracial terms, followed by
weighted spectral-measure and density arguments.

No analogous limit holds in general on the quantum torus
\(\mathbb T_\theta^d\). Indeed,
\[
T_{h+n}=T_h,
\qquad
h\in\R^d,\quad n\in\Z^d,
\]
and, for every nonzero Fourier unitary \(U^m\),
\(m\in\Z^d\setminus\{0\}\),
\[
\|\Delta_hU^m\|_p
=
\left|
e^{2\pi i\langle h,m\rangle}-1
\right|,
\]
which has no limit as \(|h|\to\infty\). The mean-zero Poincar\'e
inequality permits the endpoint argument on \(\mathbb T_\theta^d\),
but it does not yield a large-translation limit.
\end{remark}

We now use Lemma~\ref{lem:MS-normal-condition} to verify the
\(X_0\)-normality condition and apply Milman's theorem to
\(\vec X_{\mathrm{hom}}\) to complete the proof of the MS-type limit.

{\textit{\textbf {Proof of Theorem~\ref{th1.3}.}}}   
For the couple \(\vec X_{\mathrm{hom}}\),
Corollary~\ref{cor:homogeneous-K-functional} and
Lemma~\ref{lem:MS-normal-condition} give, for every
\(\x\in X_0=L_p(\R_\theta^d)\),
\[
\begin{aligned}
\|\x\|_{\widetilde X_0}
&=
\lim_{\varepsilon\to\infty}
K(\x,\varepsilon^k;\vec X_{\mathrm{hom}})
&\approx
\lim_{\varepsilon\to\infty}
\omega_p^k(\x,\varepsilon)
\approx
\|\x\|_{L_p(\R_\theta^d)}.
\end{aligned}
\]
Thus the \(X_0\)-normality condition in \eqref{eq:normal} is
satisfied.

Now let
\(\x\in B_{p,q}^{\alpha_0}(\R_\theta^d)\) for some
\(\alpha_0\in(0,k)\), and set \(\eta_0=\alpha_0/k\). By
\eqref{eq:homogeneous-interpolation-norm},
\[
\|\x\|_{(\vec X_{\mathrm{hom}})_{\eta_0,q}}
\approx
k^{1/q}
|\x|_{B_{p,q}^{\alpha_0,k}(\R_\theta^d)}
<
\infty.
\]
Hence
\[
\x\in
X_0\cap
\bigcup_{0<\eta<1}
(\vec X_{\mathrm{hom}})_{\eta,q},
\]
and Theorem~\ref{thm:Milman}\textnormal{(ii)} yields
\begin{equation}\label{eq:Milman-MS-application}
\lim_{\eta\downarrow0}
\bigl(\eta(1-\eta)\bigr)^{1/q}
\|\x\|_{(\vec X_{\mathrm{hom}})_{\eta,q}}
=
q^{-1/q}\|\x\|_{\widetilde X_0}
\approx
q^{-1/q}\|\x\|_{L_p(\R_\theta^d)}.
\end{equation}
On the other hand, setting \(\alpha=k\eta\) in
\eqref{eq:homogeneous-interpolation-norm}, we obtain
\begin{equation}\label{eq:inhom-uniform-equivalence}
\bigl(\eta(1-\eta)\bigr)^{1/q}
\|\x\|_{(\vec X_{\mathrm{hom}})_{\eta,q}}
\approx
\left(
\alpha\left(1-\frac{\alpha}{k}\right)
\right)^{1/q}
|\x|_{B_{p,q}^{\alpha,k}(\R_\theta^d)},
\end{equation}
with constants independent of \(\eta\). 
Moreover,
\[
\alpha^{1/q}
\|\x\|_{L_p(\R_\theta^d)}
\longrightarrow0
\qquad (\alpha\downarrow0).
\]
It follows from \eqref{eq:exact-MS-modulus-limit} and \eqref{eq:difference-Besov-norm} that
\[
\lim_{\alpha\downarrow0}
\alpha^{1/q}
|\x|_{B_{p,q}^{\alpha,k}(\R_\theta^d)}
\]
exists.
Since \(1-\alpha/k\to1\), we may pass to the limit in \eqref{eq:inhom-uniform-equivalence} and combine it with
\eqref{eq:Milman-MS-application} to obtain
\[
\lim_{\alpha\downarrow0}
\alpha^{1/q}
\|\x\|_{B_{p,q}^{\alpha,k}(\R_\theta^d)}
=
\lim_{\alpha\downarrow0}
\alpha^{1/q}
|\x|_{B_{p,q}^{\alpha,k}(\R_\theta^d)}
\approx
q^{-1/q}
\|\x\|_{L_p(\R_\theta^d)}.
\]
This proves \eqref{eq:MS-quantum-main}.
\vspace{0.3cm}

We conclude with the corresponding endpoint limits for
\(\vec X_{\mathrm{inhom}}\). This couple is also normal in the sense
of \eqref{eq:normal}.  Indeed, Theorem~\ref{thm:K-functional},
Theorem~\ref{th3.7}, and
Corollary~\ref{cor:top-order-Sobolev-characterization} give the
\(X_1\)-normality condition. On the other hand, the contractive
embedding
\[
W_p^k(\R_\theta^d)\hookrightarrow L_p(\R_\theta^d)
\]
implies that
\[
K(\x,t;\vec X_{\mathrm{inhom}})
=
\|\x\|_{L_p(\R_\theta^d)},
\qquad
\x\in L_p(\R_\theta^d),\quad t\geq1,
\]
and hence gives the \(X_0\)-normality condition. Thus
Theorem~\ref{thm:Milman} applies to
\(\vec X_{\mathrm{inhom}}\).

To express the resulting interpolation scale in terms of difference
norms, define, for \(0<\alpha<k\),
\begin{equation}\label{eq:inhomogeneous-normalized-Besov-norm}
\|\x\|_{\mathbf B_{p,q}^{\alpha,k}(\R_\theta^d)}
:=
\left(
\frac{k}{q\alpha(k-\alpha)}
\right)^{1/q}
\|\x\|_{L_p(\R_\theta^d)}
+
|\x|_{B_{p,q}^{\alpha,k}(\R_\theta^d)}.
\end{equation}
With \(\eta=\alpha/k\),
\eqref{eq:inhomogeneous-interpolation-norm} gives
\[
\|\x\|_{(\vec X_{\mathrm{inhom}})_{\eta,q}}
\approx
k^{1/q}
\|\x\|_{\mathbf B_{p,q}^{\alpha,k}(\R_\theta^d)},
\]
where the equivalence constants are independent of \(\alpha\).
The endpoint limits for this difference-norm realization are recorded
in the following corollary.

\begin{corollary}
\label{cor:inhomogeneous-endpoint-limits}
Let \(1\leq p,q<\infty\) and \(k\in\N\).

\begin{enumerate}[label=\textnormal{(\roman*)}]
\item
For every \(\x\in W_p^k(\R_\theta^d)\),
\[
\begin{aligned}
\lim_{\alpha\uparrow k}
(k-\alpha)^{1/q}
\|\x\|_{\mathbf B_{p,q}^{\alpha,k}(\R_\theta^d)}
&=
q^{-1/q}
\left(
\|\x\|_{L_p(\R_\theta^d)}
+
\lim_{\varepsilon\downarrow0}
\frac{\omega_p^k(\x,\varepsilon)}{\varepsilon^k}
\right)
\nonumber\\
&\approx
q^{-1/q}
\|\x\|_{W_p^k(\R_\theta^d)}.
\end{aligned}
\]

\item
If
\(\x\in B_{p,q}^{\alpha_0}(\R_\theta^d)\) for some
\(\alpha_0\in(0,k)\), then
\[
\begin{aligned}
\lim_{\alpha\downarrow0}
\alpha^{1/q}
\|\x\|_{\mathbf B_{p,q}^{\alpha,k}(\R_\theta^d)}
&=
q^{-1/q}
\left(
\|\x\|_{L_p(\R_\theta^d)}
+
\lim_{\varepsilon\to\infty}
\omega_p^k(\x,\varepsilon)
\right)
\nonumber\\
&\approx
q^{-1/q}
\|\x\|_{L_p(\R_\theta^d)}.
\label{eq:inhomogeneous-MS-main}
\end{aligned}
\]
\end{enumerate}
\end{corollary}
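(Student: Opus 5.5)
The proof splits $\|\x\|_{\mathbf B_{p,q}^{\alpha,k}}$ into its two summands and treats each separately. The Besov part is already handled by \eqref{eq:exact-BBM-modulus-limit} and \eqref{eq:exact-MS-modulus-limit}, so only the weighted $L_p$ term needs a direct computation. The equivalences then follow from Theorem~\ref{th3.7}, Corollary~\ref{cor:top-order-Sobolev-characterization} and Lemma~\ref{lem:MS-normal-condition}.

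For (i), let $\x\in W_p^k(\R_\theta^d)$. Then
\[
(k-\alpha)^{1/q}\Bigl(\tfrac{k}{q\alpha(k-\alpha)}\Bigr)^{1/q}\|\x\|_p=\Bigl(\tfrac{k}{q\alpha}\Bigr)^{1/q}\|\x\|_p\longrightarrow q^{-1/q}\|\x\|_p
\qquad(\alpha\uparrow k).
\]
The seminorm part is covered by \eqref{eq:exact-BBM-modulus-limit}. Strictly, that identity is stated for the full norm $\|\x\|_{B_{p,q}^{\alpha,k}}$, but the $L_p$ term there is multiplied by $(k-\alpha)^{1/q}\to0$. Hence $\lim_{\alpha\uparrow k}(k-\alpha)^{1/q}|\x|_{B_{p,q}^{\alpha,k}}=q^{-1/q}\lim_{\varepsilon\downarrow0}\omega_p^k(\x,\varepsilon)/\varepsilon^k$. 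Adding the two limits gives the stated equality. For the equivalence, Theorem~\ref{th3.7} identifies the limit of $\omega_p^k(\x,\varepsilon)/\varepsilon^k$ with $|\x|_{\dot W_p^k}$ up to constants depending on $d,k$. Then \eqref{eq:in-hom-norm-equivalent} gives $\|\x\|_p+|\x|_{\dot W_p^k}\approx\|\x\|_{W_p^k}$.

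For (ii), let $\x\in B_{p,q}^{\alpha_0}(\R_\theta^d)$. Since $\alpha^{1/q}(k/(q\alpha(k-\alpha)))^{1/q}=(k/(q(k-\alpha)))^{1/q}\to q^{-1/q}$ as $\alpha\downarrow0$, the $L_p$ term contributes $q^{-1/q}\|\x\|_p$. The same remark about the vanishing normalized $L_p$ term turns \eqref{eq:exact-MS-modulus-limit} into $\lim_{\alpha\downarrow0}\alpha^{1/q}|\x|_{B_{p,q}^{\alpha,k}}=q^{-1/q}\omega_p^k(\x,\infty)$. Adding the two limits gives the equality. For the equivalence, Lemma~\ref{lem:MS-normal-condition} gives $\omega_p^k(\x,\infty)\approx\|\x\|_p$; alternatively, the trivial bound $\omega_p^k(\x,\infty)\le2^k\|\x\|_p$ already suffices. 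Either way the sum is comparable to $\|\x\|_p$.

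Alternatively, one can run Theorem~\ref{thm:Milman} on $\vec X_{\mathrm{inhom}}$ using the uniform equivalence $\|\x\|_{(\vec X_{\mathrm{inhom}})_{\eta,q}}\approx k^{1/q}\|\x\|_{\mathbf B^{\alpha,k}_{p,q}}$, but that only gives the equivalences and not the exact equalities. So the direct route above is the one to use. The only point needing care is that the appendix identities are stated for $\|\cdot\|_{B_{p,q}^{\alpha,k}}$ rather than for the seminorm. As explained, the $L_p$ term there is killed by the normalization, so they transfer to $|\cdot|_{B_{p,q}^{\alpha,k}}$ without change. Otherwise the argument is bookkeeping of the explicit constants, and I expect no real obstacle.
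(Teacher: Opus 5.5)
Your proposal is correct and matches the paper's proof: you compute the limits of the two normalizing factors, transfer \eqref{eq:exact-BBM-modulus-limit} and \eqref{eq:exact-MS-modulus-limit} to the seminorm (the normalized $L_p$ term vanishes there), and obtain the equivalences from Theorem~\ref{th3.7}, \eqref{eq:in-hom-norm-equivalent} and Lemma~\ref{lem:MS-normal-condition}. Your remark that in (ii) the trivial bound $0\le\omega_p^k(\x,\infty)\le 2^k\|\x\|_p$ already gives the equivalence is also correct, because $\|\x\|_p$ already appears in the sum; so the sharp large-translation lemma is not needed for this corollary.
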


\begin{proof}
By definition,
\[
\begin{aligned}
\lim_{\alpha\uparrow k}
&(k-\alpha)^{1/q}
\left(
\frac{k}{q\alpha(k-\alpha)}
\right)^{1/q}
=
q^{-1/q}\\
&\lim_{\alpha\downarrow0}
\alpha^{1/q}
\left(
\frac{k}{q\alpha(k-\alpha)}
\right)^{1/q}
=
q^{-1/q}.
\end{aligned}
\]
The asserted identities therefore follow from
\eqref{eq:exact-BBM-modulus-limit} and
\eqref{eq:exact-MS-modulus-limit}. The equivalence in
\textnormal{(i)} follows from Theorem~\ref{th3.7} and
\eqref{eq:in-hom-norm-equivalent}, while that in
\textnormal{(ii)} follows from
Lemma~\ref{lem:MS-normal-condition}.
\end{proof}

\section{Appendix: Exact endpoint asymptotics for difference Besov norms}\label{app:exact-endpoint-asymptotics}
The exact identities
\eqref{eq:exact-BBM-modulus-limit} and
\eqref{eq:exact-MS-modulus-limit} are also of independent interest.
Although Milman's theorem gives the endpoint continuity of the
interpolation scale, the uniform equivalence between the interpolation
and difference norms (see \eqref{eq:hom-uniform-equivalence} and \eqref{eq:inhom-uniform-equivalence}) does not imply either the existence or the exact
values of the corresponding normalized difference-norm limits.
We therefore prove the two identities directly using the following elementary limit
lemma.

\begin{lemma}
\label{lem:endpoint-concentration}
Let $1\leq q<\infty$.
\begin{enumerate}[label=\textnormal{(\roman*)}]
\item
Let $F:(0,1]\to[0,\infty)$ be bounded and measurable. If
\;\(
\lim_{t\downarrow0}F(t)=F_0,
\)
then
\begin{equation}\label{eq:concentration-at-zero}
\lim_{\delta\downarrow0}
\delta
\int_0^1
t^{\delta q}F(t)^q
\,\frac{\mathrm dt}{t}
=
\frac{(F_0)^q}{q}.
\end{equation}

\item
Let $F:[1,\infty)\to[0,\infty)$ be bounded and measurable. If
\;\(
\lim_{t\to\infty}F(t)=F_\infty,
\)
then
\begin{equation}\label{eq:concentration-at-infinity}
\lim_{\delta\downarrow0}
\delta
\int_1^\infty
t^{-\delta q}F(t)^q
\,\frac{\mathrm dt}{t}
=
\frac{(F_\infty)^q}{q}.
\end{equation}
\end{enumerate}
\end{lemma}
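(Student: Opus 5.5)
The plan is to treat both parts as Abelian-type limits. The weight $\delta t^{\delta q-1}$ on $(0,1]$ (resp. $\delta t^{-\delta q-1}$ on $[1,\infty)$) has total mass exactly $1/q$, and as $\delta\downarrow0$ this mass concentrates near $t=0$ (resp. $t=\infty$).

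For (i), first compute $\delta\int_0^1 t^{\delta q}\,\frac{\mathrm dt}{t}=\frac1q$. From this,
\[
\Bigl|\delta\int_0^1 t^{\delta q}F(t)^q\,\frac{\mathrm dt}{t}-\frac{F_0^q}{q}\Bigr|
\le\delta\int_0^1 t^{\delta q}\bigl|F(t)^q-F_0^q\bigr|\,\frac{\mathrm dt}{t}.
\]
Fix $\eta>0$. Since $s\mapsto s^q$ is continuous and $F(t)\to F_0$, we can choose $t_0\in(0,1)$ with $|F(t)^q-F_0^q|<\eta$ for $0<t<t_0$. On $(0,t_0)$ the contribution is then at most $\eta\,\delta\int_0^{t_0}t^{\delta q-1}\,\mathrm dt\le\eta/q$. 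On $[t_0,1]$ the integrand is bounded by $2M^q$, where $M:=\sup F$, so this piece is at most $2M^q\,\delta\int_{t_0}^1 t^{\delta q-1}\,\mathrm dt\le 2M^q\,\delta\log(1/t_0)$, which tends to $0$ as $\delta\downarrow0$. Hence the $\limsup$ of the difference is at most $\eta/q$, and letting $\eta\downarrow 0$ proves \eqref{eq:concentration-at-zero}.

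Part (ii) goes the same way after the substitution $t\mapsto 1/t$. This turns $\int_1^\infty t^{-\delta q}F(t)^q\,\frac{\mathrm dt}{t}$ into $\int_0^1 s^{\delta q}F(1/s)^q\,\frac{\mathrm ds}{s}$. The function $G(s):=F(1/s)$ is bounded and measurable on $(0,1]$ with $G(s)\to F_\infty$ as $s\downarrow0$, so (i) applies directly and yields \eqref{eq:concentration-at-infinity}.

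No genuine obstacle is expected. The only point needing care is that the error on the region away from the concentration point carries the factor $\delta$ against a fixed finite logarithmic integral, and this is exactly where boundedness of $F$ is used. Measurability of $F$, together with that boundedness, ensures the integrals are finite for every $\delta>0$.
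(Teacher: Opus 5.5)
Your proof is correct, but it takes a slightly different route from the paper's.

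The paper substitutes $t=e^{-u}$ and then $v=\delta q u$. This turns the left-hand side of \eqref{eq:concentration-at-zero} into $\frac1q\int_0^\infty e^{-v}F\bigl(e^{-v/(\delta q)}\bigr)^q\,\mathrm dv$. The weight is now the fixed measure $e^{-v}\,\mathrm dv$, and for each fixed $v>0$ the argument of $F$ is pushed to the endpoint. Dominated convergence, with dominating function $(\sup F)^q e^{-v}$, finishes the proof. The paper then proves part (ii) by the parallel substitution $t=e^{u}$, rather than by reducing it to (i).

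You instead normalize the kernel $\delta t^{\delta q-1}$ to total mass $1/q$ and split at $t_0$, in the standard approximate-identity fashion. Boundedness of $F$ enters in the far region, giving the $O(\delta\log(1/t_0))$ term. This is the same place where the paper uses boundedness to obtain its dominating function.

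The paper's rescaling is shorter and makes the concentration at the endpoint visible at once. Your splitting avoids the dominated convergence theorem and gives an explicit error bound. Your reduction of (ii) to (i) via $t\mapsto 1/t$ is also slightly more economical than repeating the argument.
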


\begin{proof}
For part~\textnormal{(i)}, the substitutions
\(t=e^{-u}\) and \(v=\delta qu\) give
\[
\delta
\int_0^1
t^{\delta q}F(t)^q
\,\frac{\mathrm dt}{t}
=
\frac1q
\int_0^\infty
e^{-v}
F\bigl(e^{-v/(\delta q)}\bigr)^q
\,\mathrm dv.
\]
For every \(v>0\), the integrand converges to
\(e^{-v}F_0^q\) as \(\delta\downarrow0\), and it is dominated by an
integrable multiple of \(e^{-v}\). Hence
\eqref{eq:concentration-at-zero} follows from the dominated
convergence theorem.

For part~\textnormal{(ii)}, the substitutions
\(t=e^u\) and \(v=\delta qu\) yield
\[
\delta
\int_1^\infty
t^{-\delta q}F(t)^q
\,\frac{\mathrm dt}{t}
=
\frac1q
\int_0^\infty
e^{-v}
F\bigl(e^{v/(\delta q)}\bigr)^q
\,\mathrm dv,
\]
and the same argument proves
\eqref{eq:concentration-at-infinity}.
\end{proof}

{\textit{\textbf {Proof of \eqref{eq:exact-BBM-modulus-limit} and
\eqref{eq:exact-MS-modulus-limit}.}}}
We first consider the upper endpoint. Let
\(\x\in W_p^k(\R_\theta^d)\) and set
\[
F(\varepsilon)
:=
\varepsilon^{-k}\omega_p^k(\x,\varepsilon),
\qquad
F_0
:=
\lim_{\varepsilon\downarrow0}F(\varepsilon).
\]
By Theorem~\ref{th3.7}, \(F\) is bounded on \((0,1]\) and \(F_0\)
is finite. Moreover,
\[
\begin{aligned}
(k-\alpha)
|\x|_{B_{p,q}^{\alpha,k}(\R_\theta^d)}^q
&=
(k-\alpha)
\int_0^1
\varepsilon^{(k-\alpha)q}
F(\varepsilon)^q
\,\frac{\mathrm d\varepsilon}{\varepsilon}
\\
&\quad+
(k-\alpha)
\int_1^\infty
\varepsilon^{-\alpha q}
\omega_p^k(\x,\varepsilon)^q
\,\frac{\mathrm d\varepsilon}{\varepsilon}.
\end{aligned}
\]
By Lemma~\ref{lem:endpoint-concentration}\textnormal{(i)}, applied
with \(\delta=k-\alpha\), the first term converges to \(F_0^q/q\)
as \(\alpha\uparrow k\), while the second tends to zero by
\eqref{eq:BBM-large-scale-bound}. Therefore,
\[
\lim_{\alpha\uparrow k}
(k-\alpha)^{1/q}
|\x|_{B_{p,q}^{\alpha,k}(\R_\theta^d)}
=
q^{-1/q}F_0.
\]
Together with
\[
(k-\alpha)^{1/q}
\|\x\|_{L_p(\R_\theta^d)}
\longrightarrow0
\qquad (\alpha\uparrow k),
\]
this gives \eqref{eq:exact-BBM-modulus-limit}.

For the lower endpoint, suppose that
\(\x\in B_{p,q}^{\alpha_0}(\R_\theta^d)\) for some
\(\alpha_0\in(0,k)\). For \(0<\alpha<\alpha_0\), we have
\[
\begin{aligned}
\alpha
|\x|_{B_{p,q}^{\alpha,k}(\R_\theta^d)}^q
&=
\alpha
\int_0^1
\varepsilon^{-\alpha q}
\omega_p^k(\x,\varepsilon)^q
\,\frac{\mathrm d\varepsilon}{\varepsilon}
\\
&\quad+
\alpha
\int_1^\infty
\varepsilon^{-\alpha q}
\omega_p^k(\x,\varepsilon)^q
\,\frac{\mathrm d\varepsilon}{\varepsilon}.
\end{aligned}
\]
As \(\alpha\downarrow0\), the first term tends to zero by
\eqref{eq:ms-small-scale}, while
Lemma~\ref{lem:endpoint-concentration}\textnormal{(ii)}, applied
with
\[
F(\varepsilon)=\omega_p^k(\x,\varepsilon),
\qquad
\delta=\alpha,
\]
shows that the second converges to
\[
\frac{\omega_p^k(\x,\infty)^q}{q}.
\]
Consequently,
\[
\lim_{\alpha\downarrow0}
\alpha^{1/q}
|\x|_{B_{p,q}^{\alpha,k}(\R_\theta^d)}
=
q^{-1/q}\omega_p^k(\x,\infty).
\]
Together with
\(
\alpha^{1/q}\|\x\|_{L_p(\R_\theta^d)}\to0
\)
as \(\alpha\downarrow0\), this gives
\eqref{eq:exact-MS-modulus-limit}.

\begin{remark}
The identities
\eqref{eq:exact-BBM-modulus-limit} and
\eqref{eq:exact-MS-modulus-limit} also provide direct proofs of
Theorems~\ref{th1.2} and~\ref{th1.3}, without recourse to Milman's
theorem. Indeed, Theorem~\ref{th3.7} identifies
\[
\lim_{\varepsilon\downarrow0}
\frac{\omega_p^k(\x,\varepsilon)}{\varepsilon^k}
\approx
|\x|_{\dot W_p^k(\R_\theta^d)},
\]
whereas Lemma~\ref{lem:MS-normal-condition} gives
\[
\omega_p^k(\x,\infty)
\approx
\|\x\|_{L_p(\R_\theta^d)}.
\]
Substitution into
\eqref{eq:exact-BBM-modulus-limit} and
\eqref{eq:exact-MS-modulus-limit} yields the two asserted endpoint
formulae.

The two approaches emphasize different aspects of the result. The
direct argument determines the exact asymptotic behavior of the
specific difference norms used in the statements, while the
interpolation argument places both limits within a common framework
through the endpoint continuity of the real interpolation scale.
\end{remark}

{\bf Acknowledgements:} Qingquan Deng is supported by NSFC (No. 12471092 and No. 12531005).
Guixiang Hong is supported by the National Natural Science
Foundation of China (grant nos.~12325105,
W2441002, and W2611005). Ping Li is supported by NSFC (Nos  12371136 and 12571145).


\begin{thebibliography}{99}

\bibitem{ACPS2020}
A.~Alberico, A.~Cianchi, L.~Pick, and L.~Slav\'{i}kov\'{a}, \textit{On the limit as $s\to 1^{-}$ of possibly non-separable fractional Orlicz--Sobolev spaces}, Atti Accad. Naz. Lincei Rend. Lincei Mat. Appl., \textbf{31} (2020), no.~4, 879--899.

\bibitem{ACPS2020b}
A.~Alberico, A.~Cianchi, L.~Pick, and L.~Slav\'{i}kov\'{a}, \textit{On the limit as $s\to 0^{+}$ of fractional Orlicz--Sobolev spaces}, J. Fourier Anal. Appl., \textbf{26} (2020), Paper No.~80, 19 pp.

\bibitem{BS88}
C.~Bennett and R.~Sharpley, \textit{Interpolation of Operators}, Pure and Applied Mathematics, vol.~129, Academic Press, Boston, 1988.

\bibitem{BJ76}
J.~Bergh and J.~L\"{o}fstr\"{o}m, \textit{Interpolation Spaces: An Introduction}, Grundlehren der mathematischen Wissenschaften, vol.~223, Springer-Verlag, Berlin, 1976.

\bibitem{Bog07}
V.~I.~Bogachev, \textit{Measure Theory}, Vol.~II, Springer-Verlag, Berlin, 2007.



\bibitem{BBM2000}
J.~Bourgain, H.~Br\'{e}zis, and P.~Mironescu, \textit{Lifting in Sobolev spaces}, J. Anal. Math., \textbf{80} (2000), 37--86.

\bibitem{BBM01}
J.~Bourgain, H.~Br\'{e}zis, and P.~Mironescu, \textit{Another look at Sobolev spaces}, in: Optimal Control and Partial Differential Equations, IOS, Amsterdam, 2001, pp.~439--455.

\bibitem{BBM2002}
J.~Bourgain, H.~Br\'{e}zis, and P.~Mironescu, \textit{Limiting embedding theorems for $W^{s,p}$ when $s\uparrow 1$ and applications}, J. Anal. Math., \textbf{87} (2002), 77--101.


\bibitem{BSY2023}
D.~Brazke, A.~Schikorra, and P.-L.~Yung, \textit{Bourgain--Brezis--Mironescu convergence via Triebel--Lizorkin spaces}, Calc. Var. Partial Differential Equations, \textbf{62} (2023), no.~2, Paper No.~41.

\bibitem{HB02}
H.~Br\'{e}zis, \textit{How to recognize constant functions. Connections with Sobolev spaces}, Uspekhi Mat. Nauk, \textbf{57} (2002), 59--74; English transl. Russian Math. Surveys, \textbf{57} (2002), 693--708.

\bibitem{HB16}
H.~Br\'{e}zis and H.-M.~Nguyen, \textit{The BBM formula revisited}, Atti Accad. Naz. Lincei Rend. Lincei Mat. Appl., \textbf{27} (2016), 515--533.



\bibitem{CGRS2014}
A.~L.~Carey, V.~Gayral, A.~Rennie, and F.~Sukochev, \textit{Index theory for locally compact noncommutative geometries}, Mem. Amer. Math. Soc., \textbf{231} (2014), no.~1085, vi+130 pp.

\bibitem{CH2026}
D.~Chen and G.~Hong, \textit{The nonlinear estimates on quantum Besov space}, preprint (2026), arXiv:2601.11934.

\bibitem{CHWW2025}
D.~Chen, G.~Hong, L.~Wang, and W.~Wang, \textit{Navier--Stokes equations on quantum Euclidean spaces}, preprint (2025), arXiv:2511.04318.

\bibitem{CXY2013}
Z. Chen, Q. Xu and Z. Yin, \textit{Harmonic analysis on quantum tori}, Comm. Math. Phys., \textbf{322} (2013), 755--805. 

\bibitem{CDKANP2023}
V.~Crismale, L.~De Luca, A.~Kubin, A.~Ninno, and M.~Ponsiglione, \textit{The variational approach to $s$-fractional heat flows and the limit cases $s\to 0^{+}$ and $s\to 1^{-}$}, J. Funct. Anal., \textbf{284} (2023), Paper No.~109851, 38 pp.

\bibitem{DGPYYZ2022}
F.~Dai, L.~Grafakos, Z.~Pan, D.~Yang, W.~Yuan, and Y.~Zhang, \textit{The Bourgain--Br\'{e}zis--Mironescu formula on ball Banach function spaces}, Math. Ann., \textbf{388} (2024), no.~2, 1691--1768.

\bibitem{DLY2024}
\'{O}.~Dom\'{i}nguez, Y.~Li, S.~Tikhonov, D.~Yang, and W.~Yuan, \textit{A unified approach to self-improving property via $K$-functionals}, Calc. Var. Partial Differential Equations, \textbf{63} (2024), no.~9, Paper No.~231.

\bibitem{DM2023}
\'{O}.~Dom\'{i}nguez and M.~Milman, \textit{Bourgain--Brezis--Mironescu--Maz'ya--Shaposhnikova limit formulae for fractional Sobolev spaces via interpolation and extrapolation}, Calc. Var. Partial Differential Equations, \textbf{62} (2023), no.~2, Paper No.~43, 37 pp.


\bibitem{FK96}
T.~Fack and H.~Kosaki, \textit{Generalized $s$-numbers of $\tau$-measurable operators}, Pacific J. Math., \textbf{123} (1986), 269--300.


\bibitem{FHW2025}
 Z. Fan, G. Hong and L. Wang, \textit{Sharp endpoint $L_p$ estimates of quantum Schr\"{o}dinger groups}, Comm. Math. Phys., \textbf{406} (2025), 55 pp.

\bibitem{FFG2023}
G.~F.~Foghem Gounoue, \textit{A remake of Bourgain--Br\'{e}zis--Mironescu characterization of Sobolev spaces}, Partial Differ. Equ. Appl., \textbf{4} (2023), no.~2, Paper No.~16, 36 pp.

\bibitem{Gag58}
E.~Gagliardo, \textit{Propriet\`a di alcune classi di funzioni in pi\`u variabili}, Ric. Mat., \textbf{7} (1958), 102--137.


\bibitem{GaJM2022}
L.~Gao, M.~Junge, and E.~McDonald, \textit{Quantum Euclidean spaces with noncommutative derivatives}, J. Noncommut. Geom., \textbf{16} (2022), 153--213.

\bibitem{GGBI04}
V.~Gayral, J.~M.~Gracia-Bond\'{i}a, B.~Iochum, T.~Sch\"{u}cker, and J.~C.~V\'{a}rilly, \textit{Moyal planes are spectral triples}, Comm. Math. Phys., \textbf{246} (2004), 569--623.

\bibitem{GJP21}
A.~M.~Gonz\'{a}lez-P\'{e}rez, M.~Junge, and J.~Parcet, \textit{Singular integrals in quantum Euclidean spaces}, Mem. Amer. Math. Soc., \textbf{272} (2021), no.~1334, xiii+90 pp.



\bibitem{Groe1946}
H.~J.~Groenewold, \textit{On the principles of elementary quantum mechanics}, Physica, \textbf{12} (1946), 405--460.

\bibitem{GY2021}
Q.~Gu and P.-L.~Yung, \textit{A new formula for the $L^p$ norm}, J. Funct. Anal., \textbf{281} (2021), no.~4, Paper No.~109075, 19 pp.


\bibitem{HPXZ2024}
B.-X.~Han, A.~Pinamonti, Z.~Xu, and K.~Zambanini, \textit{Maz'ya--Shaposhnikova meet Bishop--Gromov}, Potential Anal., \textbf{63} (2025), no.~2, 513--529.

\bibitem{HLW23}
G.~Hong, X.~Lai, and L.~Wang, \textit{Fourier restriction estimates on quantum Euclidean spaces}, Adv. Math., \textbf{430} (2023), Paper No.~109232.


\bibitem{JS76}
H.~Johnen and K.~Scherer, \textit{On the equivalence of the $K$-functional and moduli of continuity and some applications}, in: Constructive Theory of Functions of Several Variables, Lecture Notes in Math., vol.~571, Springer, Berlin, 1976, pp.~119--140.

\bibitem{Junge2002}
M.~Junge, \textit{Doob's inequality for non-commutative martingales}, J. Reine Angew. Math., \textbf{549} (2002), 149--190.

\bibitem{JMPX2021}
M.~Junge, T.~Mei, J.~Parcet, and R.~Xia, \textit{Algebraic Calder\'{o}n--Zygmund theory}, Adv. Math., \textbf{376} (2021), Paper No.~107443, 72 pp.

\bibitem{JX2007}
M.~Junge and Q.~Xu, \textit{Non-commutative maximal ergodic theorems}, J. Amer. Math. Soc., \textbf{20} (2007), 385--439.

\bibitem{KMX05}
G.~E.~Karadzhov, M.~Milman, and J.~Xiao, \textit{Limits of higher-order Besov spaces and sharp reiteration theorems}, J. Funct. Anal., \textbf{221} (2005), 323--339.

\bibitem{KIJ2025}
M.~Kijaczko, \textit{Asymptotics of weighted Gagliardo seminorms}, Ann. Mat. Pura Appl., \textbf{204} (2025), no.~2, 1715--1728.

\bibitem{MK2023}
M.~Kim, \textit{Bourgain, Br\'{e}zis and Mironescu theorem for fractional Sobolev spaces with variable exponents}, Ann. Mat. Pura Appl. (4), \textbf{202} (2023), no.~6, 2653--2664.

\bibitem{KL05}
V.~I.~Kolyada and A.~K.~Lerner, \textit{On limiting embeddings of Besov spaces}, Studia Math., \textbf{171} (2005), 1--13.


\bibitem{LSZ20}
G.~Levitina, F.~Sukochev, and D.~Zanin, \textit{Cwikel estimates revisited}, Proc. Lond. Math. Soc. (3), \textbf{120} (2020), 265--304.


\bibitem{LMSZ2023}
S.~Lord, E.~McDonald, F.~Sukochev, and D.~Zanin, \textit{Singular Traces. Vol.~2: Trace Formulas}, 2nd ed., De Gruyter Studies in Mathematics, vol.~46/2, De Gruyter, Berlin, 2023.

\bibitem{Lu2014}
M.~Ludwig, \textit{Anisotropic fractional Sobolev norms}, Adv. Math., \textbf{252} (2014), 150--157.

\bibitem{MSV2022}
A.~Maione, A.~Salort, and E.~Vecchi, \textit{Maz'ya--Shaposhnikova formula in magnetic fractional Orlicz--Sobolev spaces}, Asymptot. Anal., \textbf{126} (2022), 201--214.

\bibitem{MS02}
V.~Maz'ya and T.~Shaposhnikova, \textit{On the Bourgain, Br\'{e}zis, and Mironescu theorem concerning limiting embeddings of fractional Sobolev spaces}, J. Funct. Anal., \textbf{195} (2002), 230--238.

\bibitem{ME24}
E.~McDonald, \textit{Nonlinear partial differential equations on noncommutative Euclidean spaces}, J. Evol. Equ., \textbf{24} (2024), no.~1, Paper No.~3, 58 pp.

\bibitem{MSX2019}
E.~McDonald, F.~Sukochev, and X.~Xiong, \textit{Quantum differentiability on quantum tori}, Comm. Math. Phys., \textbf{371} (2019), no.~3, 1231--1260.

\bibitem{MSX20}
E.~McDonald, F.~Sukochev, and X.~Xiong, \textit{Quantum differentiability on noncommutative Euclidean spaces}, Comm. Math. Phys., \textbf{379} (2020), 491--542.


\bibitem{Mei2007}
T.~Mei, \textit{Operator valued Hardy spaces}, Mem. Amer. Math. Soc., \textbf{188} (2007), no.~881, vi+64 pp.

\bibitem{MM05}
M.~Milman, \textit{Notes on limits of Sobolev spaces and the continuity of interpolation scales}, Trans. Amer. Math. Soc., \textbf{357} (2005), 3425--3442.


\bibitem{Moyal1949}
J.~E.~Moyal, \textit{Quantum mechanics as a statistical theory}, Proc. Cambridge Philos. Soc., \textbf{45} (1949), 99--124.

\bibitem{NTYYZ2025}
E.~Nakai, M.~Tang, D.~Yang, W.~Yuan, and C.~Zhu, \textit{Maz'ya--Shaposhnikova representation of quasi-norms of ball quasi-Banach function spaces on spaces of homogeneous type with weak reverse doubling property}, preprint (2025), arXiv:2511.22960.


\bibitem{PYYZ2024}
Z.~Pan, D.~Yang, W.~Yuan, and Y.~Zhang, \textit{Gagliardo representation of norms of ball quasi-Banach function spaces}, J. Funct. Anal., \textbf{286} (2024), no.~2, Paper No.~110205.

\bibitem{PSV2017}
A.~Pinamonti, M.~Squassina, and E.~Vecchi, \textit{The Maz'ya--Shaposhnikova limit in the magnetic setting}, J. Math. Anal. Appl., \textbf{449} (2017), no.~2, 1152--1159.

\bibitem{PSV2019}
A.~Pinamonti, M.~Squassina, and E.~Vecchi, \textit{Magnetic BV-functions and the Bourgain--Br\'{e}zis--Mironescu formula}, Adv. Calc. Var., \textbf{12} (2019), no.~3, 225--252.

\bibitem{PX1997}
G.~Pisier and Q.~Xu, \textit{Non-commutative martingale inequalities}, Comm. Math. Phys., \textbf{189} (1997), 667--698.

\bibitem{PX03}
G.~Pisier and Q.~Xu, \textit{Non-commutative $L^p$-spaces}, in: Handbook of the Geometry of Banach Spaces, Vol.~2, North-Holland, Amsterdam, 2003, pp.~1459--1517.


\bibitem{Rie1993}
M.~A.~Rieffel, \textit{Deformation quantization for actions of $\R^d$}, Mem. Amer. Math. Soc., \textbf{106} (1993), no.~506, x+93 pp.

\bibitem{RST2026}
M.~Ruzhansky, S.~Shaimardan, and K.~Tulenov, \textit{$L^p$--$L^q$ boundedness of Fourier multipliers on quantum Euclidean spaces}, Rev. R. Acad. Cienc. Exactas F\'{\i}s. Nat. Ser. A Mat. RACSAM, \textbf{120} (2026), no.~1, Paper No.~8, 28 pp.

\bibitem{Slo58}
L.~N.~Slobodeckij, \textit{Generalized Sobolev spaces and their applications to boundary value problems of partial differential equations}, Leningrad. Gos. Ped. Inst. U\v{c}ep. Zap., \textbf{197} (1958), 54--112.





\bibitem{XXY18}
X.~Xiong, Q.~Xu, and Z.~Yin, \textit{Sobolev, Besov and Triebel--Lizorkin spaces on quantum tori}, Mem. Amer. Math. Soc., \textbf{252} (2018), no.~1203, vi+118 pp.

\bibitem{ZYY2024}
C.~Zhu, D.~Yang, and W.~Yuan, \textit{Bourgain--Br\'ezis--Mironescu-type characterization of inhomogeneous ball Banach Sobolev spaces on extension domains}, J. Geom. Anal., \textbf{34} (2024), Paper No.~295.

\end{thebibliography}
\end{document}